\newcommand{\nl}{\nonumber\\}
\begin{document}

\newtheorem{theorem}{Theorem}[section]
\newtheorem{lemma}[theorem]{Lemma}
\newtheorem{proposition}[theorem]{Proposition}
\newtheorem{corollary}[theorem]{Corollary}

\newenvironment{proof}[1][Proof]{\begin{trivlist}
\item[\hskip \labelsep {\bfseries #1}]}{\end{trivlist}}
\newenvironment{definition}[1][Definition]{\begin{trivlist}
\item[\hskip \labelsep {\bfseries #1}]}{\end{trivlist}}
\newenvironment{example}[1][Example]{\begin{trivlist}
\item[\hskip \labelsep {\bfseries #1}]}{\end{trivlist}}
\newenvironment{remark}[1][Remark]{\begin{trivlist}
\item[\hskip \labelsep {\bfseries #1}]}{\end{trivlist}}

\newcommand{\qed}{$\square$}
\newcommand{\comment}[1]{\textcolor{red}{#1}}
\newcommand{\changed}[1]{\textcolor{blue}{#1}}
\newcommand{\wt}[1]{\widetilde{#1}}
\newcommand{\jump}[1]{\text{\textlbrackdbl}#1\text{\textrbrackdbl}}
\makeatletter
\newcommand*{\rom}[1]{\expandafter\@slowromancap\romannumeral #1@}
\newcommand{\dmn}{{\Omega}}
\newcommand{\solid}{{\cal S}}
\newcommand{\restrict}{{\cal R}}
\newcommand{\extend}{{\cal E}}

\newcommand{\cil}{\Omega}
\newcommand{\cilt}{\Omega}

\newcommand{\Qint}[1]{{\cal I}^{#1}_{H}}
\newcommand{\Qintp}{\widetilde{{\cal I}}_{H}}
\newcommand{\norm}[1]{\left\lVert#1\right\rVert}
\newcommand{\trace}[1]{\text{tr}_{0}\left( #1\right)}
\newcommand\abs[1]{\left|#1\right|}
\newcommand{\eps}{\varepsilon}
\newcommand{\de}{\delta}

\newcommand{\dap}{{d}^{2\beta}_{\Lambda}}
\newcommand{\dam}{{d}^{-2\beta}_{\Lambda}}
\newcommand{\dapa}{{d}^{2\alpha}_{\Lambda}}
\newcommand{\dama}{{d}^{-2\alpha}_{\Lambda}}
\newcommand{\Hdir}[1]{{H}^1_{0,\beta}(#1)}
\newcommand{\Hdirdual}[1]{{H}^1_{0,-\beta}(#1)}
\newcommand{\Hspace}[1]{H^1_{\beta}(#1)}
\newcommand{\Lspace}[1]{L^2_{\beta}(#1)}
\newcommand{\Hspacem}[1]{H^1_{-\beta}(#1)}
\newcommand{\Lspacem}[1]{L^2_{-\beta}(#1)}
\newcommand{\Hdirm}[1]{{H}^1_{0,-\beta}(#1)}
\newcommand{\Hdira}[1]{{H}^1_{0,\alpha}(#1)}
\newcommand{\Hdirduala}[1]{{H}^1_{0,-\alpha}(#1)}
\newcommand{\Hspacea}[1]{H^1_{\alpha}(#1)}
\newcommand{\Lspacea}[1]{L^2_{\alpha}(#1)}
\newcommand{\Hspacema}[1]{H^1_{-\alpha}(#1)}
\newcommand{\Lspacema}[1]{L^2_{-\alpha}(#1)}
\newcommand{\Hdirma}[1]{{H}^1_{0,-\alpha}(#1)}
\newcommand{\TwoNorm}[2]{{\left\lVert#1\right\rVert}_{L^2_{\beta}\left(#2\right)}}
\newcommand{\TwoNormminus}[2]{{\left\lVert#1\right\rVert}_{L^2_{-\beta}\left(#2\right)}}
\newcommand{\HNorm}[2]{{\left\lVert#1\right\rVert}_{H^1_{\beta}\left(#2 \right)}}
\newcommand{\TwoNormm}[2]{{\left\lVert#1\right\rVert}_{L^2_{-\beta}\left(#2\right)}}
\newcommand{\HNormm}[2]{{\left\lVert#1\right\rVert}_{H^1_{-\beta}\left(#2 \right)}}

\newcommand{\TwoNorma}[2]{{\left\lVert#1\right\rVert}_{L^2_{\alpha}\left(#2\right)}}
\newcommand{\TwoNormminusa}[2]{{\left\lVert#1\right\rVert}_{L^2_{-\alpha}\left(#2\right)}}
\newcommand{\HNorma}[2]{{\left\lVert#1\right\rVert}_{H^1_{\alpha}\left(#2 \right)}}
\newcommand{\TwoNormma}[2]{{\left\lVert#1\right\rVert}_{L^2_{-\alpha}\left(#2\right)}}
\newcommand{\HNormma}[2]{{\left\lVert#1\right\rVert}_{H^1_{-\alpha}\left(#2 \right)}}

\newcommand{\local}{C_{{\cal I}_{H}}}
\newcommand{\localone}{C_{{\cal I}_{H,1}}}
\newcommand{\localtwo}{C_{{\cal I}_{H,2}}}
\newcommand{\localthree}{C_{{\cal I}_{H,3}}}
\newcommand{\localprime}{{C'}_{{\cal I}_{H}}}

\newcommand{\avrg}[2]{{\langle #1 \rangle}_{#2}}

\newcommand{\hK}{\tilde{K}}
\newcommand{\x}{{\bf x}}
\newcommand{\y}{{\bf y}}
\newcommand{\z}{{\bf z}}
\newcommand{\vnode}{{\bf v}}
\newcommand{\vnodep}{{\bf v}'}
\newcommand{\vnodepp}{{\bf v}''}
\newcommand{\wnode}{{\bf w}}
\newcommand{\wnodep}{{\bf w}'}
\newcommand{\wnodepp}{{\bf w}''}
\newcommand{\tr}[2]{\text{tr}_{#1}\left(   #2    \right)}

\makeatother

\title{Upscaling  Singular Sources in Weighted Sobolev Spaces by Sub-Grid Corrections}

\author{%
	Donald L. Brown \thanks{University of Nottingham, School of 
		Mathematical Sciences,  \mbox{donald.brown@nottingham.ac.uk}}%
	\and %
	Joscha Gedicke \thanks{Faculty of Mathematics, University of Vienna, 1090 Vienna, Austria, 
		\mbox{joscha.gedicke@univie.ac.at}}%
}

\maketitle

\begin{abstract}
In this paper, we develop a numerical multiscale method to solve elliptic boundary value problems with heterogeneous diffusion coefficients and with singular source terms. 
When the diffusion coefficient is heterogeneous, this adds to the computational costs, and this difficulty is compounded by a singular source term. For singular source terms, the solution
does not belong to the Sobolev space $H^1$, but to the space $W^{1,p}$ for some $p<2$. Hence, the problem may be reformulated in a distance-weighted Sobolev space.  
Using this formulation, we develop a method to upscale the multiscale coefficient  near the singular sources by incorporating corrections into the coarse-grid. 
Using a sub-grid correction method, we correct the basis functions in a distance-weighted Sobolev space and show that these corrections can be truncated 
to design a computationally efficient scheme with optimal convergence rates. 
Due to the nature of the formulation in weighted spaces, the variational form must be posed on the cross product of complementary spaces. 
Thus, two such sub-grid corrections must be computed, one for each multiscale space of the cross product. 
A key ingredient of this method is the use of quasi-interpolation operators to construct the fine scale spaces.
Therefore, we develop a weighted projective quasi-interpolation that can be used for a general class of Muckenhoupt weight functions. 
We verify the optimal convergence of the method in some numerical examples with singular point sources and line fractures, and with oscillatory and
heterogeneous diffusion coefficients.
\end{abstract}

{\small\noindent\textbf{Keywords:}
	localization, multiscale methods, singular data, weighted Sobolev spaces
}

\section{Introduction }
 
Computing flow in heterogeneous porous media is a difficult problem due to the high-contrast in material properties as well as the large disparate scales of the permeability or hydraulic conductivity. To simplify the calculation, an upscaled or effective model is preferred so that many models and scenarios may be tested. The computational upscaling, or  numerical homogenization, of complex porous media has a large literature in various areas of applications in petroleum, environmental, and materials engineering. One key aspect, particularly in subsurface modeling, is the upscaling of material properties in  the neighborhood of singular sources, i.e. near wells or fractured injection/production sites. The upscaling of numerical simulations near the singular wellbore source in petroleum engineering has its roots in the work of Peaceman \cite{peaceman1983interpretation}. Here, special care must be taken in upscaling near the well as it is modeled by a singular Dirac source at the production site.  There are various procedures for upscaling near wells in subsurface modeling, cf. \cite{chen2009well,durlofsky2005upscaling} for a general survey. In addition to point sources, complex fracture networks of linear or planar type sources are also considered and often need to be upscaled for fast efficient simulation \cite{gilman2003practical,gong2008upscaling,li2015effective}. 
\par
The simulation of the fine-grid (non-upscaled) problem also poses unique challenges in this setting. Given the standard regularity, a $H^{-1}$ source, material properties that are $L^\infty$-elliptic, and a sufficiently regular domain, the solution of  the elliptic partial differential equation is in the Hilbert space $H^1$. However, from classical results of Stampacchia \cite{stampacchia1963equations}, for measure or singular sources such as Dirac measures, the solution lies in a Banach space $W^{1,p}$, with $p<2$ or can be reformulated in a fractional Hilbert space $H^s$, $s\in (0,1)$ \cite{babuvska1971error,scott1973finite}.
Recently, the authors in \cite{agnelli2014posteriori,d2012finite,d2008coupling}, consider reformulating the problem in a class of weighted Sobolev spaces, that are also Hilbert spaces, where square gradients are bounded in weighted norms. 
Then, the authors apply and analyze a finite element method in weighted spaces,
with weights that belong to the general class of Muckenhoupt weights \cite{kufner1985weighted,muckenhoupt1972weighted,nochetto2016piecewise}. These types of weighted spaces have proven to be very useful in the analysis and computation of the fractional Laplacian and its related extension \cite{brown2017numerical,caffarelli2007extension,capella2011regularity,cabre2010positive,nochetto2015pde}.
 In this work, as was analyzed in \cite{d2012finite,d2008coupling}, we consider an elliptic problem with $L^\infty$ coefficients with a singular source term in distance-weighted Sobolev spaces. Using the weight $\dap(x)=\text{dist}(x,\Lambda)^{2\beta}$, where $\Lambda$ is the support of a singular source and for some specific range of $\beta$. In this work, we will suppose that $\Lambda$ is either a point, a line fracture, or a planar-type fracture, depending on the ambient space dimension. The analysis of the continuous and the discrete problem for line-fractures in three dimensional space was carried out in \cite{d2012finite,d2008coupling}, and for Dirac point sources in two and three dimensions in \cite{agnelli2014posteriori}. In this work, we extend the analysis also to the case of fracture-line sources in two dimensions and planar-type fractures in three dimensions using a general result on traces in distance-weighted Sobolev spaces \cite{nekvinda1993characterization}.
\par
Returning to the issue of upscaling singular sources and multiscale features, we will utilize the theory of weighted Sobolev spaces and combine it with a multiscale method based on the variational multiscale method \cite{MR2300286,MR1660141} and its localization theory \cite{Henning.Morgenstern.Peterseim:2014,MP12}. As is the case with upscaling in the subsurface modeling literature, there are a vast array of approaches to numerical homogenization. A few approaches are  the 
multiscale finite element method \cite{Hou:Wu:1997},  
the heterogeneous multiscale method \cite{Abdulle:E:Engquist:Vanden-Eijnden:2012},   and the variational multiscale method \cite{MR1660141}. 
We will employ the  local orthogonal decomposition (LOD) method, a sort of localization of the variational multiscale method. The LOD method
is a numerical upscaling method whereby  the coarse-grid is augmented so that the corrections are 
localizable and truncated to design a computationally efficient scheme 
\cite{HP13,Kornhuber.Peterseim.Yserentant:2016,MP11,Peterseim:2015}. 
This method has been utilized in many applications \cite{Henningwave,brown2016multiscaleelastic,Brown2017,brown2016multiscale,henning2014localized,HMP12},
and has been used successfully in other weighted space contexts such as the fractional Laplacian \cite{brown2017numerical}, and many of the techniques derived in this setting will be used here. We prove the optimal error  
estimates for the LOD method with the ideal multiscale spaces as well as with truncated corrections \cite{brown2017numerical,HMP12}.
\par
A key component of the LOD upscaling method is a quasi-interpolation operator that is utilized to construct a fine-scale space.
The authors in \cite{nochetto2016piecewise} utilize a quasi-interpolation based on regularized Taylor polynomials 
\cite{brenner2007mathematical}, which are a generalization of the Cl\'{e}ment quasi-interpolation 
\cite{clement1975approximation} and are analyzed for a general class of Muckenhoupt weights, of which $\dap$ belongs to for certain intervals of $\beta$.  However, to obtain a projective quasi-interpolation, we proceed similarly
as in \cite{brown2017numerical}, where the authors utilized a local distance-weighted $L^2$ projection onto the coarse-grid 
space. Then, we prove local $L^2$ stability and approximability  properties in weighted Sobolev spaces. This proof and the truncation arguments are left for the appendix.
\par
%
We present numerical results for two different diffusion coefficients, one highly oscillatory and the other with heterogeneous data taken from the SPE10 benchmark.
We show that we obtain numerically optimal convergence rates in case of a point source, a point source together with a point sink, and for a line fracture in two dimensional space,
for a range of admissible values for $\beta$.
In all numerical experiments, we obtain good computational efficiency by truncating the computation of the correctors. 

\par
This paper is organized as follows. We begin in Section \ref{probsetting}, where we introduce the elliptic problem with discontinuous coefficients and with singular source terms. We then sketch the theory of weighted Sobolev spaces for the weights $\dap$. Here, we outline the key ingredients of well-posedness such as Poincar\'{e} and trace inequalities, $L^2$-type decompositions, and a-priori bounds. In Section \ref{quasi_int_section}, we introduce stability and approximability for the quasi-interpolation operator. Then, in Section \ref{MSmethod} we construct a multiscale space to upscale the heterogeneities and singularities of the source term. The method differs here from standard approaches in that two multiscale spaces must be computed for the bilinear form that is defined on a cross-product space. 
We then derive the global and truncated error bounds in Section \ref{errorsection}. 
 {
In Section \ref{numericssection}, we present the results of some numerical examples with two different diffusion coefficients, different singular source terms, and various suitable values for $\beta$. 
}
Finally, the proof for the stability and approximability  of the quasi-interpolation is given in Appendix \ref{stabAppendix} and the proofs for the  truncation of correctors in distance-weighted norms are given in Appendix  
\ref{truncproofsection}.
 
\section{ Problem Setting and Background}\label{probsetting}
In this section we will introduce the problem setting and some notation  for the  relevant distance-weighted Sobolev spaces. We introduce the idea of a Muckenhoupt weight, which yields a class of weighted spaces that have a valid Poincar\'{e} inequality. For a certain subclass of distance-weighted exponents we have a trace inequality from the singular source to the interior of the domain. This fact, along with a useful $L^2-$type decomposition will give us well-posedness, as well as a-priori bounds.
\subsection{Elliptic Problems with Singular Sources}
Let $\Omega\subset \mathbb{R}^d$ be a bounded, open, and connected  domain  for $d=2,3$, with Lipschitz boundary.
 We seek to solve the following heterogeneous Laplace equation with  Dirichlet boundary condition  for $u$
 \begin{subequations}\label{hetLaplace}
 	\begin{align}
 	-\text{div} \left(A\nabla u\right)&=f \delta_{\Lambda} \text{ in } \dmn,\\
 	\label{dirichlet}
 	u&=0 \text{ on } \partial \dmn,
 	\end{align}
 \end{subequations}
 where $\delta_{\Lambda}$ is the Dirac mass on $\Lambda$. Here $\Lambda$ is a sufficiently smooth closed submanifold of $\Omega$, such that dim$(\Lambda)=\ell<d$, for $\ell=0,1,2$. For simplicity we suppose, $\ell=0$ is  a point-source $x_{0}$, $\ell=1$ corresponds to a piecewise line fracture, and for $\ell=2$ this corresponds to a planar-type fracture.
\par
 
We suppose that the coarse-grid size $H$ is constrained so that the singular objects are contained inside a coarse-grid element. In other words,
all of these objects are assumed to be small, i.e. $\operatorname{diam}(\Lambda)\approx{\cal O}(H)$ and $|\Lambda|\approx H^\ell$. Further, we suppose $f\in L^2(\Lambda)$, where if $\Lambda=x_{0}$, then $f$ is just a finite constant.
 Here $A\in \left(L^{\infty}(\dmn)\right)^{d\times d}$ is assumed to be symmetric and satisfy for $x\in \dmn$
 \begin{align*}
 \gamma_1 |\xi|^2\leq \avrg{A\xi,\xi}{}\leq \gamma_2 |\xi|^2,
 \end{align*}
 for some $\gamma_1,\gamma_2>0$ and all $\xi\in\mathbb{R}^d$.

\subsection{Weighted Sobolev Spaces}
To facilitate the solution of \eqref{hetLaplace} we need additional notation.
We define the following class of  weighted Sobolev spaces for a positive weight $\dap$.  For $x\in \mathbb{R}^d$, let $dx$ be the Lebesgue measure on $\mathbb{R}^{d}$, and $ds$ on $\mathbb{R}^{\ell}$. 
We will use the notation $A\lesssim B$ if there exists a $C>0$ such that $A<CB$, where $C$ is independent of the mesh, but may depend on other parameters such as $\beta,d, \ell, \Omega, \gamma_1,\gamma_2$, etc.
For an open set ${\omega}\subset \mathbb{R}^{d}$, we define $\Lspace{\omega}$ to be all measurable functions $u$ on ${\omega}$, such that 
\[
{{\left\lVert u \right\rVert}_{L^2_{\beta}\left( {\omega} \right)}}=\int_{{\omega}} u^2\, \dap \, dx < \infty,
\]
for ${\beta} \in \left(-\frac{d-\ell}{2},\frac{d-\ell}{2}\right)$, so that the weight is of Muckenhoupt class $A_2(\mathbb{R})$, cf. \eqref{Muckenhouptconstant}.
Define $\Hspace{{\omega}}$ similarly,  all measurable functions $u$ on ${\omega}$, such that
\[ \HNorm{u}{{\omega}}:=\left(\TwoNorm{u}{{\omega}}^2+\TwoNorm{\nabla u}{{\omega}}^2\right)^{\frac{1}{2}}< \infty, \]
and we denote the space incorporating the vanishing boundary condition as
\[ \Hdir{{\omega}}=\{u\in\Hspace{{\omega}}\, : \, u=0 \text{ on } \partial{\omega}\}. \]
Integrating \eqref{hetLaplace} by parts we obtain the following weak form. We seek a solution $u\in \Hdir{\Omega}$, so that
\begin{align}\label{varform}
a(u,\psi)=l(\psi)  \text{ for all }  \psi\in \Hdirdual{\Omega},
\end{align}
where $a(\cdot, \cdot):  \Hdir{\Omega}\times  \Hdirm{\Omega}\to \mathbb{R}$ is the bilinear form
\[ a(u, \psi)=\int_{\Omega} A \nabla u \nabla \psi dx,\]
and we suppose, with more generality than the source term in  \eqref{hetLaplace}, that  $l\in ( \Hdirm{\Omega})'$.
\par

A key property of the distance weight $\dap$ is that it belongs to the Muckenhoupt class $A_2(\mathbb{R}^{d})$, \cite{gol2009weighted,haroske2008atomic,muckenhoupt1972weighted,nochetto2016piecewise}.
 For a general weight,
 $w \in L_{loc}^1(\mathbb{R}^{d})$, we say that $w \in A_{p}(\mathbb{R}^{d})$ if there exists a $C_{p,w}>0$ such that 
 \begin{align}\label{Muckenhouptconstant}
 \sup_{B} \left(\frac{1}{|B|}\int_{B} w \,  dx  \right)\left(\frac{1}{|B|}\int_{B} w^{\frac{1}{1-p}}\,   dx\right)^{p-1} =C_{p,w}<\infty,
 \end{align}
 for all balls $B\subset\mathbb{R}^{d}$. We will denote the Muckenhoupt weight constant for $w$ as $C_{p,w}$. 
 It can be shown, for very general sets (even fractal) $\Lambda$, that the following proposition holds.
 \begin{proposition}\label{alphaMuckenhoupt}
 Suppose that 
$ \beta \in \left( -\frac{d-\ell}{2},\frac{d-\ell}{2}\right), $
 then the  weight $d_{\Lambda}^{2 \beta}\in A_2(\mathbb{R}^d)$, 
 where dim$(\Lambda)=\ell$. More explicitly, we have for balls $B$ in $\mathbb{R}^d$ that 
\begin{align}\label{Muckenhouptconstant.distance}
 \sup_{B} \left(\frac{1}{|B|}\int_{B} \dap \,  dx  \right)\left(\frac{1}{|B|}\int_{B}\dam \,   dx\right)^{} =C_{2,\beta}<\infty.
\end{align}
\end{proposition}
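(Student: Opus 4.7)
The plan is to verify the $A_2$ condition in \eqref{Muckenhouptconstant.distance} by a direct computation on balls, first establishing the result for the model flat submanifold and then transferring to general smooth $\Lambda$ via a straightening diffeomorphism. This is a standard computation in the theory of Muckenhoupt weights, but the point-by-point verification for distance-to-submanifold weights is what needs to be carried out.

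First I would reduce to the case where $\Lambda$ is a flat affine $\ell$-plane, say $\Lambda_0 = \mathbb{R}^\ell \times \{0\} \subset \mathbb{R}^\ell \times \mathbb{R}^{d-\ell}$, so that $d_{\Lambda_0}(y,z) = |z|$. Since $\Lambda$ is a smooth closed $\ell$-dimensional submanifold, a finite cover by tubular neighborhoods gives bi-Lipschitz straightening maps, and the $A_2$ class is invariant under bi-Lipschitz changes of coordinates (with a controlled change of constant). Outside a fixed tubular neighborhood the weight $d_\Lambda^{2\beta}$ is bounded above and below by positive constants, so its contribution to any $A_2$ supremum is trivially finite. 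Thus it suffices to control the supremum in \eqref{Muckenhouptconstant.distance} for the flat weight $w(x) = |z|^{2\beta}$.

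Next I would split the supremum over balls $B = B_r(x_0)$ into two regimes according to $r$ versus $\mathrm{dist}(x_0,\Lambda_0)=|z_0|$:
\begin{itemize}
\item If $r \leq \tfrac{1}{2}|z_0|$, then on $B$ the weight $|z|^{2\beta}$ is comparable to $|z_0|^{2\beta}$ (and likewise $|z|^{-2\beta}\sim |z_0|^{-2\beta}$), so the two averages multiply to an $O(1)$ constant independent of $x_0$ and $r$.
\item If $r > \tfrac{1}{2}|z_0|$, then $B$ is contained in a ball of radius $\lesssim r$ centered on $\Lambda_0$. In this case I evaluate $\int_B |z|^{\pm 2\beta}\,dx$ in cylindrical coordinates $(y,z)\in\mathbb{R}^\ell\times\mathbb{R}^{d-\ell}$: the integral is bounded above by $|B_r^\ell|\,\omega_{d-\ell-1}\int_0^{r}\rho^{\pm 2\beta + (d-\ell)-1}\,d\rho \simeq r^{\ell}\cdot r^{\pm 2\beta+(d-\ell)}= r^{\pm 2\beta+d}$, provided $\pm 2\beta + (d-\ell)>0$. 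A matching lower bound is obtained by integrating over a cylinder strictly contained in $B$.
\end{itemize}
The condition $\pm 2\beta + (d-\ell) > 0$ for both signs is exactly $\beta \in \left(-\tfrac{d-\ell}{2},\tfrac{d-\ell}{2}\right)$, which is the hypothesis. Multiplying the two averages and dividing by $|B|^2 \simeq r^{2d}$ yields $r^{2\beta+d}\cdot r^{-2\beta+d} / r^{2d} \simeq 1$, so $C_{2,\beta}<\infty$.

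The main obstacle, in my view, is organizing the geometric reduction cleanly when $\Lambda$ is curved: one needs to verify that the tubular neighborhood straightening respects the distance function up to a Lipschitz factor, and to patch together the $A_2$ estimates from the model over overlapping charts. Once this localization machinery is in place, the actual verification is the elementary polar-coordinate computation above, and the integrability thresholds encode precisely the stated range of $\beta$. An alternative, essentially equivalent route would be to invoke a general theorem (cf.\ the references in \cite{nochetto2016piecewise}) that $\mathrm{dist}(\,\cdot\,,\Lambda)^{2\beta}\in A_p$ whenever $-(d-\ell)<2\beta<(d-\ell)(p-1)$ for $\Lambda$ a set with Hausdorff dimension $\ell$, specialized here to $p=2$.
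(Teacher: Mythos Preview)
Your proposal is correct, and it takes a genuinely different route from the paper. The paper's proof of this proposition is entirely by citation: it refers to \cite{agnelli2014posteriori} for point sources ($\ell=0$), to \cite{d2008coupling,d2012finite} for line singularities in three dimensions, and to \cite{haroske2008atomic} for the general case, without reproducing any computation. Your argument, by contrast, is a self-contained verification: reduce to the flat model $\Lambda_0=\mathbb{R}^\ell\times\{0\}$ via a tubular-neighborhood bi-Lipschitz straightening (using stability of $A_2$ under such maps and boundedness of the weight away from $\Lambda$), then split balls according to whether $r\lesssim\mathrm{dist}(x_0,\Lambda_0)$ or not, and in the nontrivial near-$\Lambda_0$ regime compute $\int_B|z|^{\pm 2\beta}\,dx\simeq r^{\pm 2\beta+d}$ in cylindrical coordinates, which converges precisely when $|2\beta|<d-\ell$. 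This is exactly the standard hands-on calculation underlying the cited results, so what you gain is transparency about where the range of $\beta$ comes from, at the cost of having to organize the localization and chart-patching carefully. The alternative you mention at the end---invoking a general $A_p$ result for distance-to-$\ell$-dimensional-set weights---is in fact essentially what the paper does.
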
 
\begin{proof}
The case of a point-source, $ \ell=0$ and $d=2,3$, can be found in \cite{agnelli2014posteriori}. The case of a linear fracture, $\ell=1$ and $d=3$  in \cite{d2008coupling,d2012finite}. The general case can be found in \cite{haroske2008atomic} and references therein. \qed
 \end{proof}
\par

 The key inequality that holds existence, uniqueness, and the general analysis together is the weighted Poincar\'{e} inequality.
 The weighted Poincar\'{e} inequality for Muckenhoupt weights is well studied in nonlinear potential theory of degenerate problems \cite{fabes1982local,kufner1985weighted,heinonen2012nonlinear} and references therein.
 \begin{lemma}[Distance-Weighted Poincar\'{e} Inequality]\label{poincare2}
Let $\omega\subset\Omega$, be a bounded, star-shaped domain (with respect to the ball B) and $\text{diam}(\omega)\approx H$.  Suppose that 
$\beta \in \left( -\frac{d-\ell}{2},\frac{d-\ell}{2}\right)$,
if $w\in\Hspace{\omega}$   then we have
\begin{align}\label{poincare_unit2}
 	\norm{w-\avrg{w}{\omega}}_{L_{\beta}^2(\omega )  }\lesssim H \norm{\nabla w}_{L_{\beta}^2(\omega )  },
\end{align}
where the constants are independent of $H$ and $\avrg{w}{\omega}=\frac{1}{|\omega|}\int_{\omega} w \, dx$.
 \end{lemma}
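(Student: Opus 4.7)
The plan is to invoke the standard Muckenhoupt-weighted Poincar\'e inequality with the weighted mean $\bar w_\mu := \mu(\omega)^{-1}\int_\omega w\, d\mu$, where $d\mu := \dap\, dx$, and then replace $\bar w_\mu$ by the unweighted mean $\avrg{w}{\omega}$ by exploiting the $A_2$ bound \eqref{Muckenhouptconstant.distance}. Since $\dap \in A_2(\mathbb{R}^d)$ by Proposition \ref{alphaMuckenhoupt}, we are in the setting of Fabes--Kenig--Serapioni/Chua-type weighted Poincar\'e estimates on John domains, of which star-shaped-with-respect-to-a-ball domains are a special case.

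The first step---the inequality
\[
\TwoNorm{w-\bar w_\mu}{\omega}\lesssim H \, \TwoNorm{\nabla w}{\omega}
\]
---I would derive from the averaged-Taylor-polynomial representation on star-shaped domains, which gives the pointwise bound
\[
|w(x)-\bar w_\mu|\lesssim \int_\omega \frac{|\nabla w(y)|}{|x-y|^{d-1}}\, dy
\]
with constant depending only on the star-shape ratio $\operatorname{diam}(\omega)/\operatorname{diam}(B)$, combined with the weighted $L^2$-boundedness of the truncated Riesz potential on $A_2$ weights (Muckenhoupt--Wheeden); the diameter of $\omega$ contributes the factor $H$. For the second step, the triangle inequality gives
\[
\TwoNorm{w-\avrg{w}{\omega}}{\omega}\le \TwoNorm{w-\bar w_\mu}{\omega} + |\bar w_\mu-\avrg{w}{\omega}|\,\mu(\omega)^{1/2},
\]
and Cauchy--Schwarz applied to $\bar w_\mu-\avrg{w}{\omega}=|\omega|^{-1}\int_\omega(\bar w_\mu-w)\,dx$ (splitting $1 = d_\Lambda^{\beta} \cdot d_\Lambda^{-\beta}$) yields
\[
|\bar w_\mu-\avrg{w}{\omega}|^2 \le |\omega|^{-2}\,\TwoNorm{w-\bar w_\mu}{\omega}^2 \int_\omega \dam\,dx.
\]
Applying \eqref{Muckenhouptconstant.distance} on a ball $B^\star$ of radius $\approx H$ containing $\omega$ gives $\mu(\omega)\int_\omega\dam\,dx\lesssim C_{2,\beta}\,|\omega|^2$, so $|\bar w_\mu - \avrg{w}{\omega}|\,\mu(\omega)^{1/2}\lesssim \TwoNorm{w-\bar w_\mu}{\omega}$, and combined with the first step the claim follows.

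The main technical obstacle is that first step: checking that the weighted Poincar\'e constant with the $\mu$-weighted mean is independent of $H$ and of the position of the singular set $\Lambda$, depending only on the star-shape ratio and on $C_{2,\beta}$. Since the averaged-Taylor bound depends only on the star-shape ratio, and truncating the Riesz kernel to $\omega$ picks up exactly one factor of $H$ under any $A_p$ weighted bound, the overall constant is indeed scale- and $\Lambda$-translation-invariant, as required by the statement of the lemma.
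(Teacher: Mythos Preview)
Your argument is correct, but it takes a genuinely different route from the paper. The paper's proof is a one-line citation: since $\dap\in A_2(\mathbb{R}^d)$ by Proposition~\ref{alphaMuckenhoupt}, the result follows immediately from the general Muckenhoupt-weighted Poincar\'e inequality of Nochetto--Ot\'arola--Salgado (\emph{Piecewise polynomial interpolation in Muckenhoupt weighted Sobolev spaces}, Corollary~3.2), which already delivers the inequality with the \emph{unweighted} Lebesgue mean $\avrg{w}{\omega}$ directly.

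You instead reconstruct the argument from first principles: you invoke the averaged-Taylor/Riesz-potential machinery (Fabes--Kenig--Serapioni, Muckenhoupt--Wheeden) to get the estimate with the $\mu$-weighted mean, and then transfer to the Lebesgue mean via the $A_2$ bound~\eqref{Muckenhouptconstant.distance}. This is essentially what lies behind the cited corollary, so your proof is self-contained where the paper's is not. One small imprecision: the pointwise Riesz-potential bound from the averaged Taylor polynomial controls $|w(x)-P^0w|$, where $P^0w$ is the ball average over $B$ (not $\bar w_\mu$); the passage to $\bar w_\mu$ then follows because the weighted mean minimises $c\mapsto\TwoNorm{w-c}{\omega}$. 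With that adjustment your argument is complete and gives the same dependence of constants (on $C_{2,\beta}$ and the chunkiness ratio $\operatorname{diam}(\omega)/\operatorname{diam}(B)$) that the cited result provides.
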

 \begin{proof}
By Proposition \ref{alphaMuckenhoupt}, we have $\dap\in A_{2}(\mathbb{R}^d)$, and so by the general Muckenhoupt weighted  Poincar\'{e} inequality \cite[Corollary 3.2]{nochetto2016piecewise}, we easily obtain the result. 
 \qed
 \end{proof}
 
 \begin{remark}
 	As noted in \cite{brown2017numerical}, the above inequality may be extended to a connected union of star-shaped domains where the 
 	average can be taken over a subdomain. This can be proven in a similar way to \cite[Corollary 4.4]{nochetto2015pde}.  We will refer to both of these 
 	results simply as the weighted Poincar\'{e} inequality when there is no ambiguity.
 Further, for completeness,
 	we note a similar Friedrich's type inequality also holds for $w\in \Hdir{\omega}$,
	\[ \norm{w }_{L_{\beta}^2(\omega )  }\lesssim H \norm{\nabla w}_{L_{\beta}^2(\omega )  }. \]
 \end{remark}
 \par
 We have the following decomposition of $L^2_{\beta}(\dmn;\mathbb{R}^d)$ that is critical for existence and uniqueness of solutions to \eqref{varform} in weighted spaces.
 \begin{lemma}[Decomposition of $L_{\beta}^2(\Omega;\mathbb{R}^d)$] \label{spacedecomp}
 	Let $\beta \in \left( -\frac{d-\ell}{2},\frac{d-\ell}{2}\right)$, for $\boldsymbol{\tau}\in L^2_{\beta}(\dmn;\mathbb{R}^d)$ there exist a pair $(\boldsymbol{\sigma}, z)\in L^2_{\beta}(\dmn;\mathbb{R}^d) \times \Hdir{\dmn}$ such that
\begin{align*}
\boldsymbol{\tau}=\nabla z+\boldsymbol{\sigma}, \, \,  \left<A \boldsymbol{\sigma} , \nabla w\right>_{\Omega}=0, \forall w\in \Hdirm{\dmn},\\
\TwoNorm{\nabla z}{\Omega}\lesssim \TwoNorm{\boldsymbol{\tau}}{\Omega}, \, \, \TwoNorm{ \boldsymbol{\sigma} }{\Omega}\lesssim \TwoNorm{\boldsymbol{\tau}}{\Omega}.
\end{align*}
 \end{lemma}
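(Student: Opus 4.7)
The plan is to realise the decomposition as the output of an auxiliary variational problem. Define $z\in\Hdir{\Omega}$ as the unique solution of
\[
\int_{\Omega} A\nabla z\cdot \nabla w\,dx=\int_{\Omega} A\boldsymbol{\tau}\cdot\nabla w\,dx \qquad \forall\, w\in\Hdirm{\Omega},
\]
and set $\boldsymbol{\sigma}:=\boldsymbol{\tau}-\nabla z$. The identity $\boldsymbol{\tau}=\nabla z+\boldsymbol{\sigma}$ is then tautological, and the orthogonality $\int_{\Omega} A\boldsymbol{\sigma}\cdot\nabla w\,dx=0$ for every $w\in\Hdirm{\Omega}$ is immediate by subtracting the two sides of the defining equation.

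For the stability bounds, I first check that the right-hand side $w\mapsto\int_{\Omega} A\boldsymbol{\tau}\cdot\nabla w\,dx$ is a continuous linear form on $\Hdirm{\Omega}$. Inserting the factor $d_{\Lambda}^{\beta}\cdot d_{\Lambda}^{-\beta}=1$ into the integrand and using Cauchy--Schwarz together with $A\in L^{\infty}$ yields the dual pairing bound
\[
\left|\int_{\Omega} A\boldsymbol{\tau}\cdot\nabla w\,dx\right|\leq \gamma_{2}\,\TwoNorm{\boldsymbol{\tau}}{\Omega}\,\TwoNormm{\nabla w}{\Omega}.
\]
Assuming the cross-product problem is inf-sup stable (see below), Babu\v{s}ka's theorem supplies a unique $z\in\Hdir{\Omega}$ with $\TwoNorm{\nabla z}{\Omega}\lesssim \TwoNorm{\boldsymbol{\tau}}{\Omega}$, and then the triangle inequality gives $\TwoNorm{\boldsymbol{\sigma}}{\Omega}\leq\TwoNorm{\boldsymbol{\tau}}{\Omega}+\TwoNorm{\nabla z}{\Omega}\lesssim\TwoNorm{\boldsymbol{\tau}}{\Omega}$, closing the estimates.

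The main obstacle is therefore the inf-sup condition for $a(\cdot,\cdot)$ on the weighted cross product $\Hdir{\Omega}\times\Hdirm{\Omega}$. This is precisely the well-posedness of problem \eqref{varform} for the particular source $l(w)=\int_{\Omega}A\boldsymbol{\tau}\cdot\nabla w\,dx\in(\Hdirm{\Omega})'$, which has been established in \cite{d2008coupling,d2012finite,agnelli2014posteriori} using the weighted Poincar\'{e} inequality of Lemma~\ref{poincare2}, the Muckenhoupt property of Proposition~\ref{alphaMuckenhoupt}, and the uniform ellipticity of $A$. I would simply quote that well-posedness rather than re-derive it; the decomposition and its bounds then follow from the three paragraphs above.
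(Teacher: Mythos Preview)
Your reduction of the decomposition to the solvability of an auxiliary variational problem is correct in spirit, and the steps defining $z$, setting $\boldsymbol{\sigma}=\boldsymbol{\tau}-\nabla z$, and bounding the right-hand side functional are all fine. The problem is the final paragraph: you justify the inf--sup condition for $a(\cdot,\cdot)$ on $\Hdir{\Omega}\times\Hdirm{\Omega}$ by invoking the well-posedness of \eqref{varform} from \cite{d2008coupling,d2012finite,agnelli2014posteriori}. But in those references, and in this paper (see Theorem~\ref{existence}), that well-posedness is obtained \emph{as a corollary of the very decomposition you are trying to prove}. In \cite{d2012finite} the order is Lemma~2.1 (decomposition) $\Rightarrow$ Corollary~2.2 (well-posedness), not the other way around. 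So as written your argument is circular: you have shown that the decomposition and the inf--sup condition are equivalent, but you have not established either one independently.

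The paper's proof simply observes that the abstract argument of \cite[Lemma~2.1]{d2012finite} goes through verbatim once the weighted Poincar\'{e}--Friedrichs inequality (Lemma~\ref{poincare2}) is available for the present weight $\dap$. That argument proves the decomposition directly by a closed-range / Ne\v{c}as-type mechanism, and only afterwards deduces well-posedness. To repair your proposal, either cite \cite[Lemma~2.1]{d2012finite} directly for the decomposition (which is what the paper does), or reproduce its self-contained proof of the inf--sup bound rather than appealing to the downstream well-posedness statement.
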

 \begin{proof}
 	 Since we have the weighted Poincar\'{e}-Friedrich's inequalities from Lemma \ref{poincare2} 
	 for $\beta \in \left( -\frac{d-\ell}{2},\frac{d-\ell}{2}\right)$, we see that an immediate generalization of \cite[Lemma 2.1]{d2012finite} is possible, 
	 and the same abstract  proof holds.  \qed
 \end{proof}
 From this Lemma, as in \cite{agnelli2014posteriori,d2012finite} we  establish the well-posedness of the abstract problem \eqref{varform}.
 \begin{theorem}\label{existence}
 	Let
 	$\beta \in \left( -\frac{d-\ell}{2},\frac{d-\ell}{2}\right)$, then the abstract problem \eqref{varform} is well-posed, and we have the following stability bound
 	\begin{align}
 	\norm{u}_{ \Hdir{\Omega}}\lesssim 	\norm{l}_{\left(  \Hdirm{\Omega} \right)'}.
 	\end{align}	
 \end{theorem}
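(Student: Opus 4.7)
The plan is to verify the hypotheses of the generalized Lax--Milgram (Banach--Ne\v{c}as--Babu\v{s}ka) theorem on the cross-product space $\Hdir{\Omega}\times\Hdirm{\Omega}$. Because the admissible range $\beta\in(-(d-\ell)/2,(d-\ell)/2)$ is symmetric about zero, every result established in the excerpt for weight $\beta$ also holds for weight $-\beta$; in particular, both Lemma~\ref{spacedecomp} and the weighted Poincar\'{e}/Friedrichs inequality in Lemma~\ref{poincare2} are available in the spaces $L^2_{\pm\beta}(\Omega)$ and $H^1_{0,\pm\beta}(\Omega)$.

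Continuity of $a(\cdot,\cdot)$ is routine: writing $A\nabla u\cdot\nabla\psi = (A\nabla u)\,\dap^{1/2}\cdot\nabla\psi\,\dam^{1/2}$ and applying a weighted Cauchy--Schwarz inequality together with the $L^\infty$ bound on $A$ yields $|a(u,\psi)|\lesssim\|\nabla u\|_{L^2_\beta(\Omega)}\|\nabla\psi\|_{L^2_{-\beta}(\Omega)}$.

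The essential step is the inf-sup condition. Given $u\in\Hdir{\Omega}$, I would construct an admissible test function $\psi\in\Hdirm{\Omega}$ as follows. The vector field $\boldsymbol{\tau}:=\dap A\nabla u$ lies in $L^2_{-\beta}(\Omega;\mathbb{R}^d)$ with $\|\boldsymbol{\tau}\|_{L^2_{-\beta}(\Omega)}\lesssim\|\nabla u\|_{L^2_\beta(\Omega)}$, since multiplication by $\dap$ converts an $L^2_\beta$-object into an $L^2_{-\beta}$-object. Applying Lemma~\ref{spacedecomp} with $\beta$ replaced by $-\beta$ produces a pair $(\boldsymbol{\sigma},z)\in L^2_{-\beta}(\Omega;\mathbb{R}^d)\times\Hdirm{\Omega}$ with $\boldsymbol{\tau}=\nabla z+\boldsymbol{\sigma}$, with $\langle A\boldsymbol{\sigma},\nabla w\rangle_\Omega = 0$ for every $w\in\Hdir{\Omega}$, and with control of $\|\boldsymbol{\sigma}\|_{L^2_{-\beta}(\Omega)}$ and $\|\nabla z\|_{L^2_{-\beta}(\Omega)}$ by $\|\boldsymbol{\tau}\|_{L^2_{-\beta}(\Omega)}$. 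Setting $\psi=z$ and using the symmetry of $A$ together with $u\in\Hdir{\Omega}$, the orthogonality kills the $\boldsymbol{\sigma}$-contribution from $a(u,z)$, leaving $a(u,z)=\int_\Omega|A\nabla u|^2\,\dap\,dx$, which by uniform ellipticity is bounded below by $\gamma_1^2\|\nabla u\|_{L^2_\beta(\Omega)}^2$. Combining this lower bound with the weighted Friedrichs estimate $\|z\|_{\Hdirm{\Omega}}\lesssim\|\nabla z\|_{L^2_{-\beta}(\Omega)}\lesssim\|\nabla u\|_{L^2_\beta(\Omega)}$ yields $a(u,z)\gtrsim\|u\|_{\Hdir{\Omega}}\|z\|_{\Hdirm{\Omega}}$.

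A symmetric argument, applying Lemma~\ref{spacedecomp} directly to $\dam A\nabla\psi\in L^2_\beta(\Omega;\mathbb{R}^d)$ for a given nonzero $\psi\in\Hdirm{\Omega}$, delivers the non-degeneracy condition $\sup_{u\in\Hdir{\Omega}} a(u,\psi)/\|u\|_{\Hdir{\Omega}}>0$ that the Ne\v{c}as--Babu\v{s}ka theorem requires. The stability bound $\|u\|_{\Hdir{\Omega}}\lesssim\|l\|_{(\Hdirm{\Omega})'}$ is then the standard output of that abstract theorem. The main obstacle throughout is careful bookkeeping of the weight exponents so that Lemma~\ref{spacedecomp} is invoked in the correct weighted space; the trick that makes everything fit is precisely that multiplication of a gradient by $\dap$ (respectively $\dam$) interchanges the roles of the two complementary weighted spaces, converting a trial-space object into a test-space object and thereby building the test function out of the trial data.
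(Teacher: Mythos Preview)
Your proposal is correct and follows essentially the same approach as the paper, which simply states that the result is an immediate corollary of Lemma~\ref{spacedecomp} and defers to \cite[Corollary~2.2]{d2012finite}. You have spelled out precisely the argument behind that citation: use the $L^2_{\pm\beta}$ decomposition to manufacture a test function verifying the inf--sup condition of the Banach--Ne\v{c}as--Babu\v{s}ka theorem, with the weight-swapping trick $\boldsymbol{\tau}=d_\Lambda^{2\beta}A\nabla u$ as the key construction.
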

 \begin{proof}
 	This is an immediate corollary of Lemma \ref{spacedecomp}, cf.  \cite[Corollary 2.2]{d2012finite}.    \qed
 \end{proof}
 
The above theorem is for more general source terms than we will consider in this work. We will focus on singular source terms and so must consider a smaller class of function spaces, and values here,
which we will denote as  $\alpha$. To this end, we introduce the natural trace space related to Dirac measures $\delta_{\Lambda}$.

 \begin{lemma}[Distance-Weighted Trace Inequality]\label{CanonicalTrace}
 	Suppose dim$(\Lambda)=\ell$, $\text{dim}(\Omega)=d$, and $0\leq \ell \leq d-1$,  and that $\alpha$ is so that
 	\begin{align}\label{alphabounds}
 	\frac{d-\ell}{2}-1< \alpha <\frac{d-\ell}{2}.
 	\end{align}
 	Then, there exists a bounded continuous trace operator $\tr{\Lambda}{\cdot}: L^2(\Lambda)\to \Hspacema{\Omega}$. We have the following  bound
 	\begin{align}\label{CanonicalTraceestimate}
 	\norm{v}_{L^{2}(\Lambda)}\lesssim \HNormma{  v}{ \dmn },
 	\end{align}
 	where the hidden constant depends on $\alpha$ and $\Lambda$.
 \end{lemma}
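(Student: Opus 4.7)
The plan is to combine the known cases from the cited references with a localization argument based on Nekvinda's general characterization \cite{nekvinda1993characterization}. For $\ell=0$ with $d=2,3$ the bound is proved in \cite{agnelli2014posteriori}, and for $\ell=1$ with $d=3$ in \cite{d2012finite,d2008coupling}; in each case one only has to verify that the hypothesis $\frac{d-\ell}{2}-1<\alpha<\frac{d-\ell}{2}$ specializes to the range used there. The new content is the extension to $\ell=1$, $d=2$ (a line fracture in the plane) and $\ell=2$, $d=3$ (a planar fracture in space), both of codimension one.

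For these new cases I would proceed by localization and flattening. Since $\Lambda$ is a smooth closed submanifold sitting in $\Omega$, cover it by finitely many coordinate charts $\{U_i\}$ on each of which a bi-Lipschitz map $\Phi_i$ straightens $\Lambda\cap U_i$ to an open piece of $\mathbb{R}^\ell\times\{0\}\subset\mathbb{R}^d$, and choose a subordinate partition of unity $\{\chi_i\}$. Under such a bi-Lipschitz change of coordinates, the distance weight satisfies $d_\Lambda(x)^{-2\alpha}\approx |y|^{-2\alpha}$ where $y$ denotes the transverse $\mathbb{R}^{d-\ell}$ coordinate, with multiplicative constants that are uniform thanks to compactness and smoothness of $\Lambda$. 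Therefore it suffices to establish the model estimate
\[
\|v(\cdot,0)\|_{L^2(\mathbb{R}^\ell)}^2\lesssim \int_{\mathbb{R}^\ell\times B_R}\bigl(|v|^2+|\nabla v|^2\bigr)|y|^{-2\alpha}\,dx\,dy
\]
on a tubular neighborhood of the hyperplane.

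The core of the model estimate is a one-dimensional weighted trace. By Fubini, for almost every tangential point $x\in\mathbb{R}^\ell$ the slice $y\mapsto v(x,y)$ lies in the radial weighted Sobolev space with weight $|y|^{-2\alpha}$ on $\mathbb{R}^{d-\ell}$. Passing to polar coordinates $y=r\theta$ converts this to a one-dimensional problem on $(0,R)$ with effective weight $r^{d-\ell-1-2\alpha}$, and a fundamental-theorem-of-calculus plus Cauchy-Schwarz argument gives the pointwise bound
\[
|v(x,0)|^2\lesssim \int_0^R \bigl(|v|^2+|\partial_r v|^2\bigr)\,r^{d-\ell-1-2\alpha}\,dr,
\]
valid exactly when $r^{-(d-\ell-1-2\alpha)}$ is integrable at the origin, i.e.\ when $\alpha>\frac{d-\ell}{2}-1$. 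This is precisely the lower bound in the hypothesis; the upper bound $\alpha<\frac{d-\ell}{2}$ is already in force, being the $A_2$-Muckenhoupt condition of Proposition \ref{alphaMuckenhoupt} applied with $-\alpha$ in place of $\beta$. Integrating over $x\in\mathbb{R}^\ell$ delivers the model inequality.

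To conclude, patch the chart estimates using $\{\chi_i\}$: the product rule introduces commutator terms $v\,\nabla\chi_i$, but these are bounded by the $L^2_{-\alpha}$ part of $\HNormma{v}{\Omega}$ and summed against finite overlap, so they are absorbed into the right-hand side of \eqref{CanonicalTraceestimate}. The main analytical obstacle is the endpoint behavior of the one-dimensional weighted trace, but this is precisely what the strict lower bound on $\alpha$ rules out, yielding a finite constant depending only on $\alpha$ and $\Lambda$. Uniformity of constants across the finitely many charts, and between $d_\Lambda^{-2\alpha}$ and $|y|^{-2\alpha}$, follows from the smoothness and compactness of $\Lambda\subset\Omega$ built into the standing hypotheses.
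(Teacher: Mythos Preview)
Your proposal is correct and in fact goes beyond what the paper does. The paper's own ``proof'' of this lemma is purely a list of citations: it refers to \cite{agnelli2014posteriori} for the point-source cases, to \cite{d2012finite,d2008coupling} for a line in three dimensions, and to the general trace characterization of Nekvinda \cite{nekvinda1993characterization} for the remaining codimension-one cases ($\ell=1$, $d=2$ and $\ell=2$, $d=3$). No argument is spelled out.

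You reproduce the same citations for the known cases and then supply an explicit localization--flattening--one-dimensional-weighted-trace argument for the new cases. This is a genuine addition. Your key step, the radial estimate
\[
|w(0)|^2 \lesssim \int_0^R \bigl(|w|^2+|w'|^2\bigr)\,r^{\gamma}\,dr,\qquad \gamma=d-\ell-1-2\alpha,
\]
is valid precisely when $-1<\gamma<1$, which is exactly the hypothesis $\frac{d-\ell}{2}-1<\alpha<\frac{d-\ell}{2}$; you correctly identify both endpoints (the lower one for integrability of $r^{-\gamma}$ near the origin in the Cauchy--Schwarz step, the upper one to keep $r^\gamma$ locally integrable, equivalently the $A_2$ condition for $d_\Lambda^{-2\alpha}$). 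The flattening and partition-of-unity patching are standard and handled correctly.

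Two minor remarks. First, your argument is not actually specific to codimension one: the polar-coordinate reduction works for any $1\le d-\ell\le d$, so you have in effect given a unified direct proof covering all the cases, not only the ``new'' ones. Second, in codimension one the polar decomposition degenerates ($\theta\in\{\pm1\}$), so the passage to a radial variable is simply the normal coordinate; this is harmless but worth saying explicitly. Compared with the paper, your route is more self-contained and makes the role of the $\alpha$-window transparent, at the cost of writing out what Nekvinda's theorem already packages.
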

 \begin{proof}
 	The case of a point-source, $ \ell=0$ and $d=2,3$, can be found in \cite{agnelli2014posteriori}. The case of a linear fracture, $\ell=1$ and $d=3$  in \cite{d2012finite,d2008coupling}. 
 	For a general discussion on trace spaces of distance-weighted spaces we refer   to \cite{nekvinda1993characterization}, where one can see the general bounds; in particular for the case of planar type fractures $\ell=2$ and $d=3$, as well as the case of a linear fracture $\ell=1$ in $d=2$ dimensional space. \qed
 \end{proof}
 Thus, we have the following  well-posedness for singular source terms.
  \begin{corollary}\label{apriori}
  	Suppose dim$(\Lambda)=\ell$, $\text{dim}(\Omega)=d$, and $0\leq \ell \leq d-1$.	
	Let $\alpha \in \left( \frac{d-\ell}{2}-1,\frac{d-\ell}{2}\right)$, 
	then     \eqref{hetLaplace} with Dirac measure data, $f\delta_{\Lambda}$,  for $f\in L^2(\Lambda)$, is well-posed, and we have the following  stability bound
  	\begin{align}
  	\norm{u}_{ \Hdira{\Omega}}\lesssim 	\norm{f}_{L^2(\Lambda)}.
  	\end{align}
  \end{corollary}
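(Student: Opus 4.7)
The plan is to reduce this corollary to the abstract well-posedness result in Theorem \ref{existence} by recognizing that the singular right-hand side $f\delta_\Lambda$, while not an $L^2$ function, still defines a bounded linear functional on the dual space $\Hdirduala{\Omega}$ thanks to the trace inequality of Lemma \ref{CanonicalTrace}.

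First, I would check that the exponent $\alpha$ falls within the admissible range for the abstract theory. Since $\frac{d-\ell}{2}-1 \ge -\frac{d-\ell}{2}$ for every $\ell \in \{0,1,\dots,d-1\}$ (with equality only when $\ell = d-1$), the hypothesis $\alpha \in \bigl(\frac{d-\ell}{2}-1, \frac{d-\ell}{2}\bigr)$ implies $\alpha \in \bigl(-\frac{d-\ell}{2}, \frac{d-\ell}{2}\bigr)$, so Proposition \ref{alphaMuckenhoupt} applies and the weighted Poincaré inequality (Lemma \ref{poincare2}) and the $L^2_\alpha$-decomposition (Lemma \ref{spacedecomp}) are available with $\beta$ replaced by $\alpha$. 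In particular, Theorem \ref{existence} yields a unique $u \in \Hdira{\Omega}$ solving
\[
a(u,\psi) = l(\psi) \qquad \text{for all } \psi \in \Hdirduala{\Omega},
\]
together with the a priori bound $\|u\|_{\Hdira{\Omega}} \lesssim \|l\|_{(\Hdirduala{\Omega})'}$, provided we can verify that the distributional pairing $l(\psi) := \int_\Lambda f\,\psi\, ds$ defines a continuous functional on $\Hdirduala{\Omega}$.

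The central step is to control $l$ using the trace estimate. For $\psi \in \Hdirduala{\Omega}$, the condition \eqref{alphabounds} is exactly the hypothesis of Lemma \ref{CanonicalTrace}, so the trace operator $\tr{\Lambda}{\cdot}: \Hspacema{\Omega} \to L^2(\Lambda)$ is bounded and $\|\psi\|_{L^2(\Lambda)} \lesssim \|\psi\|_{\Hspacema{\Omega}}$. By Cauchy--Schwarz on the $\ell$-dimensional manifold $\Lambda$,
\[
|l(\psi)| = \left| \int_\Lambda f \,\psi \, ds \right| \le \|f\|_{L^2(\Lambda)} \, \|\psi\|_{L^2(\Lambda)} \lesssim \|f\|_{L^2(\Lambda)} \, \|\psi\|_{\Hdirduala{\Omega}},
\]
which gives $\|l\|_{(\Hdirduala{\Omega})'} \lesssim \|f\|_{L^2(\Lambda)}$. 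Substituting this estimate into the abstract bound from Theorem \ref{existence} yields the desired stability inequality $\|u\|_{\Hdira{\Omega}} \lesssim \|f\|_{L^2(\Lambda)}$.

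I do not expect a serious obstacle here: the hard analytic work has been packaged into Proposition \ref{alphaMuckenhoupt}, Lemma \ref{spacedecomp}, Theorem \ref{existence}, and Lemma \ref{CanonicalTrace}. The only subtlety worth double-checking is the asymmetry between the test and trial spaces — the solution lives in the $+\alpha$-weighted space while the trace must be controlled by the $-\alpha$-norm of the test function — but this is precisely the pairing dictated by Lemma \ref{CanonicalTrace}, and is the reason the range \eqref{alphabounds} is narrower than the full Muckenhoupt range appearing in Theorem \ref{existence}.
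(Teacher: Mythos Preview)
Your argument is correct and follows essentially the same route as the paper: the paper's proof simply invokes Lemma~\ref{spacedecomp} (equivalently Theorem~\ref{existence}) together with the trace Lemma~\ref{CanonicalTrace}, and you have spelled out precisely how those two ingredients combine. The explicit check that the interval $\bigl(\tfrac{d-\ell}{2}-1,\tfrac{d-\ell}{2}\bigr)$ lies inside the Muckenhoupt range and the Cauchy--Schwarz estimate for $l(\psi)$ are exactly the details the paper leaves implicit.
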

 \begin{proof}
This is an immediate corollary of Lemma \ref{spacedecomp} and the trace Lemma \ref{CanonicalTrace}. The arguments can be see in more detail in \cite[Remark 1]{d2012finite}.
 \qed
 \end{proof}

\section{Quasi-Interpolation in  Distance-Weighted  Sobolev Spaces}\label{quasi_int_section}
The multiscale method utilized in this paper, as well as previous works \cite{brown2016multiscaleelastic,brown2017numerical,brown2016multiscale,MP12}, relies on the construction of a projective quasi-interpolation operator.
Here we construct a quasi-interpolation operator for distance-weighted Sobolev spaces using weighted local $L^2$ projections onto  simplices in a similar vein to the authors in  \cite{bramble2002stability}.  Much of this presentation will follow that of \cite{brown2017numerical}, where the authors handled a specific type of weight for fractional Laplacians. 
We introduce the discretization and classical nodal basis.  
We then state the local stability and approximability properties of these operators.

\subsection{Coarse Grid Finite Elements}
Here we follow much of the notation in \cite{brown2017numerical,MP11}.  We suppose that we have a coarse quasi-uniform, shape-regular discretization ${\cal T}_{H}$ of the  domain $\Omega$ with characteristic mesh size $H$.  In this work, we will not consider errors from the fine-grid $h$. 
We denote the nodes of the mesh  ${\cal N}$. The interior   nodes of $\Omega$ (not including vanishing Dirichlet condition) we denote as ${\cal N}_{int}$, and the Dirichlet nodes as ${\cal N}_{dir}$.  We will write ${\cal N}({\omega})$ for nodes restricted to ${\omega}$, similarly for interior,  or Dirichlet nodes.
We suppose further that there is a $T\in {\cal T}_{H}$ such that $\Lambda \subset T$. Note that if the source intersected a small number of triangles, taking the intersected patch would also be sufficient here. 

Let the classical conforming $\mathbb{P}_{1}$ 
finite element space over ${\cal T}_{H}$ be given by $S_{H}$,
and let $V_{H}=S_{H}\cap \Hdir{\dmn}$. Utilizing the notation in \cite{nochetto2015pde},  we denote $\vnode \in{\cal N}_{}$ as nodal values. 
The $\mathbb{P}_1$ nodal basis function $\lambda_{\vnode}$, for a node $\vnode \in{\cal N}_{}$, is written as
\begin{align}\label{P1}
\lambda_{\vnode}(\vnode)=1 \text{ and } \lambda_{\wnode}(\vnode)=0, \vnode\neq \wnode\in {\cal N}_{} .
\end{align}
This is a basis for  $V_{H}$.
We define the  patch around $\vnode$ as
\[
\omega_{\vnode}=\bigcup_{T \ni \vnode}T,
\]
for $T\in {\cal T}_{H}$.
We define for any patch ${\omega}_{\vnode}$
the extension patch
\begin{subequations}\label{patches}
	\begin{align}
	{\omega}_{\vnode}&={\omega}_{\vnode,0}=\text{supp}( \lambda_{\vnode}) \cap \dmn,\\
	{\omega}_{\vnode,k}&=\text{int}(\cup \{ T\in {\cal T}_{H}| T\cap \bar{\omega}_{\vnode,k-1}\neq \emptyset \}\cap \dmn,
	\end{align}
\end{subequations}
for $k\in \mathbb{N}_{+}$.  
We will denote $V_{H}|_{\omega}$ to be the coarse grid space restricted to some domain ${\omega}$. 
\par
We further suppose that for these patches $\frac{|B|}{|{\omega}_{\vnode,k}|}\lesssim 1$, for some ball $B$ containing ${\omega}_{\vnode,k}$. 
Thus, for $\beta \in \left( -\frac{d-\ell}{2},\frac{d-\ell}{2}\right)$, we have the bound 
\begin{align}
    &\left(\frac{1}{|{\omega}_{\vnode,k}|}\int_{{\omega}_{\vnode,k}} \dap \, dx  \right)\left(\frac{1}{|{\omega}_{\vnode,k}|}\int_{{\omega}_{\vnode,k}} \dam\, dx \right)  \lesssim \left(\frac{|B|}{|{\omega}_{\vnode,k}|  }\frac{1}{|B|} \int_{B} \dap \, dx  \right)\left(\frac{|B|}{|{\omega}_{\vnode,k}|}\frac{1}{|B|} \int_{B} \dam \, dx \right)\nonumber \\
    \label{Muckenhouptconstant.patch}
   & \lesssim \left(\frac{|B|}{|{\omega}_{\vnode,k}|  } \right)^2 \left(\frac{1}{|B|} \int_{B} \dap \, dx  \right)\left(\frac{1}{|B|} \int_{B} \dam \, dx \right) \lesssim \left(\frac{|B|}{|{\omega}_{\vnode,k}|  } \right)^2  C_{2,\beta} \lesssim C_{2,\beta}.
\end{align}
Where we utilized the bound \eqref{Muckenhouptconstant} and Proposition \ref{alphaMuckenhoupt}, hence we can apply the Muckenhoupt weight bounds to our patches.

\subsection{Quasi-Interpolation Operator}
In a related setting, 
the authors in \cite{nochetto2015pde,nochetto2016piecewise} construct a quasi-interpolation based on a higher order Cl\'{e}ment type of operator.
In this section, we will construct a slightly different quasi-interpolation that is also projective. 
This projective quasi-interpolation satisfies the requisite stability and approximability properties. 
This is a  modification of the operator of \cite{bramble2002stability} and was utilized in perforated domains in \cite{brown2016multiscale} and in \cite{brown2017numerical} for fractional Laplacians.
\par
We now define the $\dap$-weighted local $L^2$ projections, for  
$\beta \in \left( -\frac{d-\ell}{2},\frac{d-\ell}{2}\right)$. For $\vnode\in{\cal N}_{int}$,
${\cal P}_{\vnode}: L_{\beta}^2({\omega}_{\vnode})\to {V}_{H}|_{\omega_{\vnode}}$
is a weighted projection in the sense that
\begin{align}\label{L2proj}
\int_{{\omega}_{\vnode}}( {\cal P}_{\vnode}u) v_{H} \dap  \, dx= \int_{{\omega}_{\vnode}}(u v_{H}) \dap  \, dx \text{ for all } v_{H}\in  {V}_{H}|_{\omega_{\vnode}}.
\end{align}
From this we define the quasi-interpolation operator $\Qint{\beta}: H^1_{\beta}(\Omega ) \to {V}_{H}$ as
\begin{align}\label{proj}
\Qint{\beta} u(x)=\sum_{\vnode\in{\cal N}_{int}} ({\cal P}_{\vnode} u)(\vnode) \lambda_{\vnode}(x) .
\end{align}
Note that this quasi-interpolation assumes zero Dirichlet boundary conditions as we sum over the interior nodes ${\cal N}_{int}$ only. 
If we have non-trivial Dirichlet conditions, techniques to handle the boundary nodes will have to be employed as in \cite{brown2017numerical,scott1990finite}, 
or even additional boundary corrections have to be computed as in \cite{henning2014localized}.

\subsection{  Local Stability and Approximability  }
The quasi-interpolation operator $\Qint{\beta}$ defined by \eqref{proj} satisfies the following stability and local approximation properties.  
The proof of this lemma is based on that presented in  \cite{brown2017numerical,melenkApel}, 
but is slightly simpler here since we do not have to treat non-zero boundary terms. Since the proof is only slightly different, 
we leave it for the Appendix \ref{stabAppendix}.
\begin{lemma}\label{stablelemma}
Let $\Qint{\beta}$ be given by \eqref{proj} and $\vnode\in {\cal N}$. 
Suppose that $\beta \in \left( -\frac{d-\ell}{2},\frac{d-\ell}{2}\right)$.
The quasi-interpolation satisfies the following local stability estimates for all $u\in H^{1}_{\beta}(\Omega)$,
\begin{subequations}\label{stableoperator}
	\begin{align}
		\TwoNorm{\Qint{\beta} u}{{\omega}_{\vnode}  }&\lesssim \TwoNorm{ u}{{\omega}_{\vnode,1}  }, \\ 
		\TwoNorm{\nabla \Qint{\beta} u}{{\omega}_{\vnode}  }&\lesssim \TwoNorm{\nabla u}{{\omega}_{\vnode,1}  }.
		\end{align}
	\end{subequations}
	Further, the quasi-interpolation satisfies the following local approximation properties
	\begin{subequations}\label{stableproj}
		\begin{align}
		\label{stableproj1}
		&\TwoNorm{u-\Qint{\beta} u}{{\omega}_{\vnode}  } \lesssim   H \TwoNorm{\nabla  u}{{\omega}_{\vnode,1} },\\
		\label{stableproj2}
		&\TwoNorm{\nabla (u-\Qint{\beta} u)}{ {\omega}_{\vnode} } \lesssim   \TwoNorm{\nabla  u}{\omega_{\vnode,1} }.
		\end{align}
	\end{subequations}
	Moreover, the quasi-interpolation $\Qint{\beta}$ is a projection.
\end{lemma}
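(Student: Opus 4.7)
The plan is to follow the classical template for Scott--Zhang/Cl\'ement-type analysis, adapted to the weighted setting by systematically using Proposition \ref{alphaMuckenhoupt} and the weighted Poincar\'e inequality from Lemma \ref{poincare2}. I would carry out the proof in five steps, corresponding to (projection property), (nodal control), ($L^2$ stability), (approximation), and ($H^1$ stability), in that order.

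First I would dispose of the projection property. If $u\in V_H$, then $u|_{\omega_{\vnode}}\in V_H|_{\omega_{\vnode}}$, so the defining identity \eqref{L2proj} forces ${\cal P}_{\vnode} u = u|_{\omega_{\vnode}}$ and hence $({\cal P}_{\vnode} u)(\vnode)=u(\vnode)$. Summing $u(\vnode)\lambda_{\vnode}$ over interior nodes recovers $u$ because $u$ vanishes at boundary nodes; this also shows that $\Qint{\beta}$ reproduces the constant $1$ on any interior patch whose support is disjoint from $\partial\Omega$, a fact I will need for the gradient estimates.

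Next, the nodal control. Since ${\cal P}_{\vnode}$ is the $\dap$-weighted $L^2$ projection onto $V_H|_{\omega_{\vnode}}$, we have the trivial bound $\TwoNorm{{\cal P}_{\vnode} u}{\omega_{\vnode}}\le \TwoNorm{u}{\omega_{\vnode}}$. The key lemma I would establish (or cite from \cite{brown2017numerical}) is a weighted norm equivalence on the finite-dimensional space $V_H|_{\omega_{\vnode}}$: for any $q\in V_H|_{\omega_{\vnode}}$,
\begin{equation*}
|q(\vnode)|^2 \int_{\omega_{\vnode}}\dap\,dx \;\lesssim\; \int_{\omega_{\vnode}} q^2\,\dap\,dx,
\end{equation*}
with constants controlled by the Muckenhoupt constant $C_{2,\beta}$ via the patch estimate \eqref{Muckenhouptconstant.patch}. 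Applied to $q={\cal P}_{\vnode} u$ and combined with projection stability, this gives $|({\cal P}_{\vnode} u)(\vnode)|^2 \int_{\omega_{\vnode}}\dap\,dx \lesssim \TwoNorm{u}{\omega_{\vnode}}^2$. A matching inverse bound $\int_{\omega_{\vnode}}\lambda_{\vnode}^2\,\dap\,dx\lesssim \int_{\omega_{\vnode}}\dap\,dx$ then yields the local $L^2$ stability \eqref{stableoperator}(a) after summing the contributions of nodes in a coarse element $T\subset \omega_{\vnode}$ and absorbing into the enlarged patch $\omega_{\vnode,1}$.

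For the $L^2$ approximation estimate \eqref{stableproj1}, I would exploit that ${\cal P}_{\vnode}$ reproduces constants. Writing $u-{\cal P}_{\vnode} u = (u-c)-{\cal P}_{\vnode}(u-c)$ with $c=\avrg{u}{\omega_{\vnode}}$, the weighted Poincar\'e inequality from Lemma \ref{poincare2} gives $\TwoNorm{u-{\cal P}_{\vnode} u}{\omega_{\vnode}}\lesssim H\TwoNorm{\nabla u}{\omega_{\vnode}}$. The interpolation error is then handled by the identity
\begin{equation*}
u-\Qint{\beta} u \;=\; \sum_{\vnode\in{\cal N}_{int}} (u-({\cal P}_{\vnode} u)(\vnode))\lambda_{\vnode} \;+\; \Big(u-\sum_{\vnode\in{\cal N}_{int}}\lambda_{\vnode}\cdot u\Big),
\end{equation*}
using the partition of unity on interior elements, the nodal bound from the previous step applied to ${\cal P}_{\vnode}(u-c)$, and summing up the Poincar\'e estimate over the touching patches (which explains the appearance of $\omega_{\vnode,1}$). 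For $H^1$ approximation \eqref{stableproj2} I would subtract the same constant $c$, use the partition-of-unity cancellation $\sum \nabla\lambda_{\vnodep}=0$ on interior elements to write $\nabla\Qint{\beta} u$ in terms of nodal differences $(({\cal P}_{\vnodep}u)(\vnodep)-c)$, apply an inverse estimate $|\nabla\lambda_{\vnodep}|\lesssim H^{-1}$ together with the $\mathbb{P}_1$-norm equivalence from step two, and close with Poincar\'e; this simultaneously yields the $H^1$ stability \eqref{stableoperator}(b) by the triangle inequality $\TwoNorm{\nabla\Qint{\beta} u}{\omega_{\vnode}}\le \TwoNorm{\nabla(u-\Qint{\beta} u)}{\omega_{\vnode}}+\TwoNorm{\nabla u}{\omega_{\vnode}}$.

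The main obstacle will be verifying the weighted norm equivalence on polynomial spaces, because the distance weight $\dap$ may vanish or blow up inside $\omega_{\vnode}$ when the patch is adjacent to $\Lambda$, so the usual scaling-to-a-reference-element argument breaks down. Here one must invoke the Muckenhoupt $A_2$ character of the weight and the patch estimate \eqref{Muckenhouptconstant.patch} directly on the physical patch, essentially as in \cite{brown2017numerical}. Once this equivalence is in hand, the remaining steps are essentially unweighted-style Cl\'ement calculations with $L^2$ replaced by $L^2_\beta$ throughout, which is why the proof is naturally relegated to Appendix \ref{stabAppendix}.
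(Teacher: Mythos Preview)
Your proposal is correct and follows essentially the same strategy as the paper's proof in Appendix \ref{stabAppendix}: both hinge on controlling the nodal values $|({\cal P}_{\vnode'}u)(\vnode')|$ via the unweighted $L^\infty\!\to\! L^1$ inverse estimate on $\mathbb{P}_1$ combined with the Muckenhoupt $A_2$ bound \eqref{Muckenhouptconstant.patch}, and then subtracting the mean $\avrg{u}{\omega_{\vnode,1}}$ and invoking the weighted Poincar\'e inequality of Lemma \ref{poincare2} for the gradient-controlled estimates. The only cosmetic differences are that the paper derives the nodal bound by coupling the projection identity $\int ({\cal P}_{\vnode'}u)^2\dap = \int u\,{\cal P}_{\vnode'}u\,\dap$ with an $L^\infty$ bound in one step rather than first isolating your abstract weighted norm-equivalence lemma, and it proves the $H^1$ stability \eqref{stableoperator} directly rather than via the triangle inequality from \eqref{stableproj2}.
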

\begin{proof}
	See Appendix \ref{stabAppendix}. \qed
\end{proof}

\section{Numerical Upscaling Method }\label{MSmethod}
We now will construct our multiscale approximation space to handle the oscillations created by the heterogeneities of the coefficient and the sub-grid singular source terms.
The singular source terms are incorporated into the coarse-grid corrections. 
This splitting can be found in \cite{Henning.Morgenstern.Peterseim:2014,MP11} and references therein.
We begin by constructing fine-scale spaces, that contain the small scale information, as well as singular source information via the distance weight.

\subsection{Construction of the Multiscale Space}
We define the kernel quasi-interpolation operator for 
$\beta \in \left( -\frac{d-\ell}{2},\frac{d-\ell}{2}\right)$
to be the subspace
\begin{align*}
{V}_{\beta}^f=\{v\in  \Hdir{\dmn}  \;| \; \Qint{\beta} v=0\}.
\end{align*}
These  spaces will capture the sub-grid scale singular features not resolved by ${V}_H$. Note that by stability of $\Qint{\beta}$, this is  a closed subspace of $\Hdir{\Omega}$, as this will be needed for the fine-scale decomposition of $L^2_{\beta}(\Omega)$. 
We define the corrector $Q^{\beta}_{\dmn}  :{V}_H\to {V}_{\beta}^f$ to be the projection operator such that for $v_H\in {V}_H$ we compute $Q^{\beta}_{\dmn} (v_{H})\in {V}^f_{\beta}$ as
\begin{align}\label{corrector}
\int_{\dmn}A\nabla Q^{\beta}_{\dmn} (v_H) \nabla w\,  { \dap } \,   \, dx=\int_{\dmn}A\nabla v_H \nabla w\,  { \dap} \,   \, dx, \text{ for all } w\in  { {V}_{\beta}^f}.
\end{align}
We use the correctors to define the multiscale space
\begin{align}
\label{msspace_def}
{V}^{ms,\beta}_{H}=({V}_{H}-Q^{\beta}_{\dmn} ( {V}_{H}))\subset \Hdir{\dmn}.
\end{align}
This projection gives a $A\dap-$weighted orthogonal splitting
\[
\Hdir{\dmn}={V}^{ms,\beta}_{H}\oplus_{A\dap} {V}^f_{\beta},
\]
so that for  $u \in \Hdir{\dmn}$ and $u_{H}^{ms}\in{V}^{ms,\beta}_{H}$, we have $u-u_{H}^{ms}\in {V}^f_{\beta}$.  Further, we note in the following Proposition \ref{correctorPropBound} that these correctors are well posed, and thus the multiscale space exists.

\begin{proposition}\label{correctorPropBound}
	The corrector problem \eqref{corrector} is well posed and $Q^{\beta}_{\dmn}$ satisfies the bound
	\begin{align}
	\TwoNorm{\nabla Q^{\beta}_{\dmn}(v_{H})}{\Omega}\lesssim \TwoNorm{\nabla v_H}{\Omega}.
	\end{align}
\end{proposition}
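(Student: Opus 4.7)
The plan is to apply the Lax--Milgram lemma to the symmetric bilinear form
\[
b(u,w)=\int_{\dmn} A\nabla u\cdot \nabla w\,\dap\,dx
\]
on the test space $V_{\beta}^{f}\subset \Hdir{\dmn}$, and then obtain the stability bound by a standard energy-type calculation. First I would verify that $b(\cdot,\cdot)$ is well defined: the pointwise ellipticity of $A$ and a weighted Cauchy--Schwarz in $\Lspace{\Omega}$ yield the continuity estimate $|b(u,w)|\le \gamma_{2}\TwoNorm{\nabla u}{\Omega}\TwoNorm{\nabla w}{\Omega}$ for all $u,w\in \Hdir{\Omega}$. Since $\dap\in A_{2}(\mathbb{R}^{d})$ is locally integrable and $\Omega$ is bounded, this also shows that the right-hand side $b(v_{H},\cdot)$ with $v_{H}\in V_{H}$ defines a continuous linear functional on $V_{\beta}^{f}$.

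Next I would establish that $V_{\beta}^{f}$, equipped with $\TwoNorm{\nabla\cdot}{\Omega}$, is a Hilbert space. Closedness follows from the $\Lspace{{\omega}_{\vnode,1}}$-stability of $\Qint{\beta}$ in Lemma \ref{stablelemma}, which makes $V_{\beta}^{f}=\ker\Qint{\beta}$ the kernel of a bounded operator and hence closed in $\Hdir{\Omega}$. The weighted Friedrichs inequality from the remark following Lemma \ref{poincare2} shows that $\TwoNorm{\nabla\cdot}{\Omega}$ is an equivalent norm on $\Hdir{\Omega}$, and therefore also on the closed subspace $V_{\beta}^{f}$. Combined with the uniform ellipticity of $A$, this yields the coercivity $b(w,w)\ge \gamma_{1}\TwoNorm{\nabla w}{\Omega}^{2}$ for all $w\in V_{\beta}^{f}$, and Lax--Milgram produces a unique solution $Q^{\beta}_{\dmn}(v_{H})\in V_{\beta}^{f}$.

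For the stability estimate, I would test \eqref{corrector} against $w=Q^{\beta}_{\dmn}(v_{H})\in V_{\beta}^{f}$ itself, use coercivity on the left and continuity on the right to obtain
\[
\gamma_{1}\TwoNorm{\nabla Q^{\beta}_{\dmn}(v_{H})}{\Omega}^{2}\le b\bigl(v_{H},Q^{\beta}_{\dmn}(v_{H})\bigr)\le \gamma_{2}\TwoNorm{\nabla v_{H}}{\Omega}\TwoNorm{\nabla Q^{\beta}_{\dmn}(v_{H})}{\Omega},
\]
and divide through.

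The only mildly delicate point is justifying that the weighted energy $\TwoNorm{\nabla v_{H}}{\Omega}$ makes sense for $v_{H}\in V_{H}$ and that $b(v_{H},\cdot)$ lives in $(V_{\beta}^{f})'$. This reduces to $\dap\in L^{1}(\Omega)$, which is immediate from $\dap\in A_{2}(\mathbb{R}^{d})\subset L^{1}_{\mathrm{loc}}(\mathbb{R}^{d})$ and boundedness of $\Omega$, so no real obstacle arises. The substantive ingredient that makes the argument go through is really the weighted Friedrichs inequality, which upgrades the semi-norm $\TwoNorm{\nabla\cdot}{\Omega}$ to a genuine norm on $\Hdir{\Omega}$ within the admissible range of $\beta$.
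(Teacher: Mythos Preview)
Your proof is correct and follows essentially the same route as the paper: Lax--Milgram on the closed subspace $V_{\beta}^{f}$ using coercivity and boundedness of the weighted form, then the energy estimate by testing against $Q^{\beta}_{\dmn}(v_{H})$. The only cosmetic difference is that the paper verifies $\TwoNorm{\nabla v_{H}}{\Omega}<\infty$ by splitting into the cases $\beta>0$ (where $\dap$ is bounded) and $\beta<0$ (where $\dap\in L^{1}(\Omega)$ and $\nabla v_{H}\in L^{\infty}$), whereas your single appeal to $\dap\in L^{1}_{\mathrm{loc}}$ combined with the piecewise-constant gradient covers both at once.
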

\begin{proof}
	It is trivial to see that the variational form \eqref{corrector} is coercive and bounded on the closed subspace $V_{\beta}^f\subset \Hdir{\dmn}$, so the Lax-Milgram theorem holds. The difficulty is to obtain the bounds and to make sure that the right-hand side is well posed. 
	In problem \eqref{corrector}, take $w\in{V}_{\beta}^f$ to be $Q^{\beta}_{\dmn}(v_H)$, then we have 
	\begin{align*}
	\TwoNorm{\nabla Q^{\beta}_{\dmn}(v_{H})}{\Omega}^2 &\lesssim 	\int_{\dmn}|A^{1/2}\nabla Q^{\beta}_{\dmn} (v_H)|^2  \,  { \dap } \,   \, dx=\int_{\dmn}A\nabla v_H\nabla Q^{\beta}_{\dmn}(v_{H}) \,  { \dap} \,   \, dx\\
	&\lesssim \left(\int_{\dmn}|A^{1/2}\nabla v_H|^2 \,  { \dap} \,   \, dx \right)^{\frac{1}{2}}\left(\int_{\dmn}|A^{1/2}\nabla Q^{\beta}_{\dmn}(v_{H})|^2 \,  { \dap} \,   \, dx \right)^{\frac{1}{2}} .
	\end{align*}
	Here we used the coercivity and the boundedness of the bilinear form, and we obtain 
	\[
	\TwoNorm{\nabla Q^{\beta}_{\dmn}(v_{H})}{\Omega} \lesssim  \left(\int_{\dmn}|\nabla v_H|^2 \,  { \dap} \,   \, dx \right)^{\frac{1}{2}}\lesssim \TwoNorm{\nabla v_H}{\Omega}< \infty .
	\]
	This is due to the fact that  $V_{H}\subset \Hdir{\Omega}$ for $\beta  \in \left( -\frac{d-\ell}{2},\frac{d-\ell}{2}\right)$,
	which  can be seen from two cases.  The trivial case is when $\beta\in (0,\frac{d-\ell}{2})$, and so $\norm{\dap}_{L^{\infty}} \lesssim C(\Omega)<\infty$, and thus
	\[
	\left(\int_{\dmn}|\nabla v_H|^2 \,  { \dap} \,   \, dx \right)^{\frac{1}{2}}\lesssim  C(\Omega)\left(\int_{\dmn}|\nabla v_H|^2 \,  \,   \, dx \right)^{\frac{1}{2}}< \infty.
	\]
	Now we suppose $\beta\in (-\frac{d-\ell}{2},0)$, and denote with $T$ the triangle where $\nabla v_H$ obtains its maximum, then 
	 {
	\begin{align*}
	&    \left(\int_{\dmn}|\nabla v_H|^2 \,  { \dap} \,   \, dx \right)^{\frac{1}{2}} \lesssim  \norm{\nabla v_H}_{L^{\infty}(T)}  \left(\int_{\dmn} \,  { \dap} \,   \, dx \right)^{\frac{1}{2}} < \infty
	\end{align*}
	}
	%
	since  $ { \dap} \in L^1(\Omega)$, if $\beta\in (-\frac{d-\ell}{2},0)$, which can be seen from \cite[Lemma 2.2]{agnelli2014posteriori}.  \qed
\end{proof}

Note here this $\beta$ interval is the most general, and only takes into account the values where the distance function is of Muckenhoupt class. For a singular source term, we need the restricted interval $\alpha\in  \left( \frac{d-\ell}{2}-1,\frac{d-\ell}{2}\right)$. 
 The multiscale problem is  defined  on a cross product:
	\[
	V^{ms,  \alpha}_{H}\times V^{ms,  -\alpha}_{H}\subset \Hdira{\dmn} \times \Hdirduala{\dmn}.
	\]
We refer to these  modified coarse spaces, $V_{H}^{ms \pm \alpha}$, as the ``ideal'' multiscale spaces. The multiscale Galerkin approximation  $u_{H}^{ms}\in {V}_{H}^{ms,\alpha}$ to \eqref{hetLaplace}  satisfies 
\begin{align}\label{msvarform}
\int_{\dmn}A \nabla u_{H}^{ms} \nabla v\,    dx=\int_{\Lambda } f v \, ds \text{ for all } v\in {V}^{ms,-\alpha}_{H} .
\end{align}
\begin{remark}
	 Note that we must have this $\pm\alpha$ pairing due to   the bilinear form acting on the cross product
		\[
		a(\cdot, \cdot): V^{ms,  \alpha}_{H}\times V^{ms,  -\alpha}_{H}\subset \Hdira{\dmn} \times \Hdirduala{\dmn} \to \mathbb{R}.
		\]
		In addition, due to the requirements of the trace theorem for singular data, we must have 
		\[ v\in {V}^{ms,-\alpha}_{H}\subset\Hdirduala{\dmn} \]
		 so that error bounds may be obtained.
\end{remark}

\subsection{Truncated Multiscale Space}
The solution of  \eqref{corrector} requires the calculation of  global correctors.  However, is is now well established that in most diffusive regimes the correctors decay exponentially. 
To this end, we define the localized fine-scale space to be the fine-scale space extended by zero outside the patch, that is in the larger $\beta$ interval
\[
{V}^f_{\beta}({\omega}_{\vnode,k})=\{v\in {V}_{\beta}^f |\text{   } v|_{\dmn\backslash {\omega}_{\vnode ,k}}=0\}.
\]
We let  for some $\vnode \in {\cal N}_{int}$ and $k\in \mathbb{N}$ the localized  corrector operator $Q^{\beta}_{\vnode,k}: {V}_{H}\to {V}_{\beta}^f({\omega}_{\vnode,k})$, be defined such that given a $v_{H}\in {V}_{H}$
\begin{align}\label{Qcorrector}
\int_{{\omega}_{\vnode,k}} A \nabla Q^{\beta}_{\vnode ,k}(v_{H}) \nabla w\,   {\dap} \, dx=\int_{{\omega}_{\vnode}}A\hat{\lambda}_{\vnode}\nabla v_{H} \nabla w\,   \,  {\dap}  dx, \text{ for all } w \in {V}^f_{\beta}({\omega}_{\vnode ,k}),
\end{align}
where $\hat{\lambda}_{\vnode}=\frac{{\lambda}_{\vnode}}{\sum_{\vnode'\in {\cal N}_{in }}\lambda_{\vnode'}}$ is  augmented due to the zero Dirichlet condition. 
The collection $\{ \hat{\lambda}_{\vnode }\}_{\vnode\in {\cal N}_{in}}$ is a partition of unity \cite{Henning.Morgenstern.Peterseim:2014}.  
We denote the global truncated corrector operator as
\begin{align}\label{Qcorrectorbasis.global}
Q^{\beta}_{k}(v_{H})=\sum_{\vnode\in {\cal N}_{int}}Q^{\beta}_{\vnode,k}(v_{H}).
\end{align}
With this  notation, we   write the truncated multiscale space as
\begin{align}
\label{msspace_def_trunc}
{V}^{ms,\beta}_{H,k}=({V}_{H}-Q^{\beta}_{k} ( {V}_{H}))\subset \Hdir{\dmn}.
\end{align}
Then, the corresponding truncated multiscale approximation to \eqref{hetLaplace} is: find $ u_{H,k}^{ms}\in {V}^{ms,\alpha}_{H,k}$ such that
\begin{align}\label{localmsvarform}
\int_{\dmn}A \nabla u_{H,k}^{ms} \nabla v\,   \, dx=\int_{\Lambda}f v ds \text{ for all } v\in {V}^{ms,-\alpha}_{H,k}.
\end{align}
This more efficient scheme is utilized in the numerical experiments. 
\par
Note also that for sufficiently large $k$, we recover the full domain and obtain the ideal corrector with functions of global support, denoted $Q^{\beta}_{\dmn }$, from \eqref{corrector}.

\section{Error Analysis}\label{errorsection}
In this section we present the error introduced by using  \eqref{msvarform} on the global domain to compute the solution to \eqref{hetLaplace}. Then, we show how localization effects the error when we use  \eqref{localmsvarform} on truncated domains. The key component of these error estimates is related to the trace spaces from Lemma  \ref{CanonicalTrace} and the a-priori estimate from Corollary \ref{apriori}.

\subsection{Weighted Trace Inequality}
We begin first by a scaled weighted trace inequality. 
\begin{lemma}\label{traceHlemma} Suppose that 
	$\alpha \in \left(\frac{d-\ell}{2}-1,\frac{d-\ell}{2}\right)$.
	Let $T\in {\cal T}_{H}$ such that  $\Lambda \subset   T$.  Then, for $u\in H^1_{-\alpha}(\Omega)$, we have the following trace inequality 
	\begin{align}\label{traceboundH}
	\norm{ u}_{L^2 (\Lambda) }   \lesssim H^{\alpha-\frac{d-\ell}{2}   } \TwoNormminusa{  u}{T } +H^{\alpha-\frac{d-\ell}{2}+1   }   \TwoNormminusa{  \nabla u}{ T }.
	\end{align}
\end{lemma}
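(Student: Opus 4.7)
The plan is to reduce the claim to Lemma \ref{CanonicalTrace} by a scaling argument on the single coarse element $T$ that contains $\Lambda$. Since $T\in {\cal T}_H$ is shape-regular with $\mathrm{diam}(T)\approx H$ and by assumption $\mathrm{diam}(\Lambda)\approx H$ with $\Lambda\subset T$, a homothety $\Phi_H:\hat T\to T$ given by $x=H\hat x$ (after translation) sends $T$ to a reference element $\hat T$ of unit size and $\Lambda$ to a rescaled submanifold $\hat\Lambda=\Lambda/H\subset \hat T$ of dimension $\ell$ with $\mathrm{diam}(\hat\Lambda)\approx 1$. Setting $\hat u(\hat x)=u(H\hat x)$, I can then invoke the trace inequality of Lemma \ref{CanonicalTrace} applied with ambient domain $\hat T$ and source $\hat\Lambda$, exploiting the hypothesis $\alpha\in(\tfrac{d-\ell}{2}-1,\tfrac{d-\ell}{2})$ to obtain
\[
\|\hat u\|_{L^2(\hat\Lambda)}\;\lesssim\;\|\hat u\|_{H^1_{-\alpha}(\hat T)}.
\]

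Next I would track the scaling of each integral under $\Phi_H$. The Lebesgue measure scales as $dx=H^d\,d\hat x$, the $\ell$-dimensional Hausdorff measure on $\Lambda$ as $ds=H^{\ell}\,d\hat s$, the distance function as $d_\Lambda(x)=H\,\hat d_{\hat\Lambda}(\hat x)$ so that $d_\Lambda^{-2\alpha}=H^{-2\alpha}\hat d_{\hat\Lambda}^{-2\alpha}$, and $\nabla_x u = H^{-1}\hat\nabla_{\hat x}\hat u$. Substituting these into the reference-element trace estimate gives
\[
H^{-\ell/2}\|u\|_{L^2(\Lambda)}\;\lesssim\;H^{(2\alpha-d)/2}\TwoNormminusa{u}{T}+H^{(2\alpha-d)/2+1}\TwoNormminusa{\nabla u}{T},
\]
and multiplying through by $H^{\ell/2}$ yields exactly \eqref{traceboundH} with exponents $\alpha-\tfrac{d-\ell}{2}$ and $\alpha-\tfrac{d-\ell}{2}+1$.

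The main obstacle is ensuring that the constant produced by Lemma \ref{CanonicalTrace} on the reference configuration $(\hat T,\hat\Lambda)$ is uniform in the coarse element $T$. This is where shape-regularity of ${\cal T}_H$ enters: under quasi-uniformity, only finitely many reference configurations of $\hat T$ occur up to rigid motion, and the smoothness assumption on $\Lambda$ (together with $\mathrm{diam}(\Lambda)\approx H$) guarantees that the rescaled $\hat\Lambda$ lies in a controlled family of submanifolds, so the trace constant can be taken independent of $H$ and of the particular element $T$. Once this uniformity is justified, the rest is a bookkeeping of powers of $H$ as sketched above, and the hidden constants absorb the dependence on $\alpha$, $d$, $\ell$, and $\Lambda$ as in the paper's convention.
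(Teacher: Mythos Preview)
Your proposal is correct and follows essentially the same approach as the paper: map $T$ to a unit-size reference element, invoke the reference trace inequality of Lemma~\ref{CanonicalTrace}, and track powers of $H$ through the change of variables. The paper uses a general affine map $A_T:\hat T\to T$ (with $d_\Lambda(A_T(\hat x))\approx H\,d_{\hat\Lambda}(\hat x)$ from shape regularity and $\|\nabla A_T\|\approx H$) rather than a pure homothety, but the bookkeeping and the resulting exponents are identical; your homothety is simply the isotropic special case, and shape regularity absorbs the remaining distortion into the hidden constants.
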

\begin{proof}
	We proceed by using mapping arguments similar to \cite[Lemma 7.2]{ErnGuermond2015} and weighted-scaling arguments from \cite{d2012finite,d2008coupling} for $\ell=1$ and $d=3$. The estimate for $\ell=0$ and $d=2,3$ can be found in \cite{agnelli2014posteriori}. 
	Using general scaling arguments, we generalize this to the case of $\ell=2$ and $d=3$, and $\ell=1$ and $d=2$.
	\par
	We denote the  
	the reference (unit size) element $\hat{T}$ and similarly the reference sub-domain $\hat{\Lambda}$. We let $A_{T}:\hat{T}\to T$ be an affine mapping, and denote $\hat{u}=u\circ A_{T}$, $\hat{x}=A_T^{-1}(x)$ for $x\in T$. Clearly, $\hat{\Lambda}=A^{-1}_T(\Lambda)\subset \hat{T}$ and $\text{diam}(T)\approx H$.
	Note that from \cite[Lemma 3.2]{d2012finite} we have from {shape regularity} that  {$ c H d_{\hat{\Lambda}}(\hat{x}) \leq d_{\Lambda}(A_{T}(\hat{x}))\leq C H d_{\hat{\Lambda}}(\hat{x})$}, thus, 
	\begin{align}\label{scalingK}
	\TwoNormma{u}{T}^2=\int_{T} u^2 \text{d}_{\Lambda}^{-2\alpha}(x) dx\gtrsim\frac{|T|}{|\hat{T}|}H^{-2 \alpha}\int_{\hat{T}} \hat{u}^2(\hat{x}) d_{\hat{\Lambda}}^{-2\alpha}(\hat{x}) d\hat{x} \gtrsim C H^{-2\alpha} \frac{|T|}{|\hat{T}|} \norm{\hat{u}}^2_{L^2_{-\alpha}(\hat{T})}.
	\end{align}
	By using standard trace inequality arguments,  the trace bound \eqref{CanonicalTraceestimate}, and the above scaling \eqref{scalingK}, in the weighted norm we obtain 
	\begin{align*}
	\norm{{u}}_{L^2(\Lambda)}&=\left( \frac{|\Lambda|}{|\hat{\Lambda}|} \right)^{\frac{1}{2} }  \norm{{\hat{u}}}_{L^2(\hat{\Lambda})}\lesssim   
	|\Lambda| ^{\frac{1}{2} }  \left(  \TwoNormma{  \hat{u}}{ \hat{T} }+\TwoNormma{  \nabla \hat{u}}{ \hat{T} }\right)\\
	&  \lesssim |\Lambda| ^{\frac{1}{2} } |T|^{-\frac{1}{2}}H^{\alpha} \left(  \TwoNormma{  {u}}{ {T} }+\norm{\nabla A_{T}}\TwoNormma{  \nabla {u}}{ {T} }\right)\\
	&  \lesssim  H^{\frac{\ell}{2}} H^{-\frac{d}{2}}  H^{\alpha} \left(  \TwoNormma{  {u}}{ {T} }+H\TwoNormma{  \nabla {u}}{ {T} }\right).
	\end{align*}
	Here we have used that  $|\Lambda| \approx H^\ell$, for $\ell=0,1,2$, where we take  $|\Lambda|=|x_0|=1$ for $\ell=0$, and
	where $|\cdot|$ refers to the measure in the relevant dimension for $\ell=1,2$. Here we suppose a planar fracture has area $H^2$ and a line fracture has length $H$.   \qed
\end{proof}

Using local approximability of $\Qint{\alpha}(u)$,  we have the following corollary.
\begin{corollary}\label{traceHlemmaInteropolant} Suppose the assumptions of Lemma \ref{traceHlemma}, we then have
	\begin{align}\label{traceboundH}
	\norm{ u-\Qint{-\alpha}(u)}_{L^2 (\Lambda) }   \lesssim H^{\alpha-\frac{d-\ell}{2}+1   }   \TwoNormminusa{  \nabla u}{ T }.
	\end{align}
\end{corollary}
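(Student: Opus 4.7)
The plan is to apply Lemma~\ref{traceHlemma} directly to the function $u - \Qint{-\alpha}(u)$, which belongs to $H^1_{-\alpha}(\Omega)$ by the stability of the quasi-interpolant (Lemma~\ref{stablelemma} with $\beta=-\alpha$, valid since $-\alpha$ lies in the Muckenhoupt range). This gives
\[
\norm{u-\Qint{-\alpha}(u)}_{L^2(\Lambda)} \lesssim H^{\alpha-\frac{d-\ell}{2}}\,\TwoNormminusa{u-\Qint{-\alpha}(u)}{T} + H^{\alpha-\frac{d-\ell}{2}+1}\,\TwoNormminusa{\nabla(u-\Qint{-\alpha}(u))}{T}.
\]

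Next I would control each of the two resulting terms using the local approximation/stability properties of $\Qint{-\alpha}$ from Lemma~\ref{stablelemma}. For the first (weighted $L^2$) term, I would cover $T$ by the patches $\omega_{\vnode}$ associated with its vertices and apply \eqref{stableproj1} node-wise, which yields
\[
\TwoNormminusa{u-\Qint{-\alpha}(u)}{T} \lesssim H\,\TwoNormminusa{\nabla u}{T^+},
\]
where $T^+$ denotes the one-layer extension of $T$ (the union of the $\omega_{\vnode,1}$). This converts the $H^{\alpha-(d-\ell)/2}$ prefactor into the desired $H^{\alpha-(d-\ell)/2+1}$. For the second (weighted gradient) term, the approximation bound \eqref{stableproj2} applied likewise gives
\[
\TwoNormminusa{\nabla(u-\Qint{-\alpha}(u))}{T} \lesssim \TwoNormminusa{\nabla u}{T^+},
\]
so this term already has the right scaling and can be combined with the first.

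Adding the two contributions and using finite overlap of the patches to absorb $T^+$ back into the notation $T$ (with the usual shape-regularity constant) produces the claimed estimate. The only nontrivial point is that Lemma~\ref{stablelemma} is stated on nodal patches rather than on a single simplex, so I must be careful to sum node-wise contributions and to use that $\dim(T^+) \approx H$ so that no extra scaling in $H$ is lost; this is routine given shape regularity and the Muckenhoupt bound \eqref{Muckenhouptconstant.patch}, and is the only mildly delicate bookkeeping step in the argument.
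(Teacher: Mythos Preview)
Your proposal is correct and follows exactly the approach the paper sketches: apply Lemma~\ref{traceHlemma} to $u-\Qint{-\alpha}(u)$ and then invoke the local approximation and stability estimates of Lemma~\ref{stablelemma}. The only quibble is your final remark about ``absorbing $T^+$ back into $T$'': since $T\subset T^+$ the inequality goes the wrong way, so the rigorous bound is in terms of $\TwoNormminusa{\nabla u}{T^+}$ (a slightly enlarged patch), which is harmless in the paper's applications where the right-hand side is immediately enlarged to $\Omega$.
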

\begin{proof}
This is an easy consequence of Lemma \ref{traceHlemma}, and stability and approximability of $\Qint{-\alpha}(u)$  from Lemma \ref{stablelemma}.
\end{proof}

\subsection{Error with Global Support}
To obtain the error of the multiscale method with globally computed correctors \eqref{corrector}, we must utilize the tools of existence and uniqueness for the cross-product space as in \cite{d2012finite}. To this end, we have the following fine-scale decomposition of $L^2_{\beta}(\Omega;\mathbb{R}^d)$. This will then allow us to prove an error bound for the upscaling method.
\begin{theorem}[Fine-Scale Decomposition of $L_{\beta}^2(\Omega;\mathbb{R}^d)$] \label{fine-scale-decomp}Let $\beta   \in \left(-\frac{d-\ell}{2},\frac{d-\ell}{2}\right)$. For each $\boldsymbol{\tau} \in L^2_{\beta}(\dmn;\mathbb{R}^d)$, there is a unique pair $(\boldsymbol{\sigma},z) \in L^2_{\beta}(\dmn;\mathbb{R}^d)\times V_{\beta}^f$ so that 
	\begin{align*}
&\boldsymbol{\tau} =\nabla z+\boldsymbol{\sigma}, \, \, \int_{\dmn}A\boldsymbol{\sigma}\nabla w \, dx =0, \,\, \forall w\in V_{-\beta}^f,\\
&\TwoNorm{\nabla z}{\dmn}\lesssim \TwoNorm{ \boldsymbol{\tau} }{\dmn}, \, \, \TwoNorm{\boldsymbol{\sigma}}{\dmn}\lesssim \TwoNorm{ \boldsymbol{\tau} }{\dmn}.
	\end{align*}
This can be written as the direct sum: $L_{\beta}^2(\dmn;\mathbb{R}^d)=\nabla V_{\beta}^f \oplus \left(\nabla V_{-\beta}^f\right)^{\perp_{A}}$
\end{theorem}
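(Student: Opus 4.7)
The plan is to recast the claim as a cross-product variational problem. Finding $(\boldsymbol{\sigma}, z) \in L^2_{\beta}(\Omega;\mathbb{R}^d) \times V_{\beta}^f$ as stated is equivalent to solving
\[
a(z, w) := \int_{\Omega} A \nabla z \cdot \nabla w \, dx = \int_{\Omega} A \boldsymbol{\tau} \cdot \nabla w \, dx \quad \forall w \in V_{-\beta}^f,
\]
and then setting $\boldsymbol{\sigma} := \boldsymbol{\tau} - \nabla z$. Boundedness of $a$ on $V_\beta^f \times V_{-\beta}^f$, each equipped with the gradient norm in the relevant weighted $L^2$ space, and boundedness of the right-hand side, are routine: split $1 = \dap^{1/2} \dam^{1/2}$ in the integrands and combine Cauchy-Schwarz with the $L^\infty$ bound on $A$. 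The remaining task is therefore to verify a uniform inf-sup condition for $a$ on this cross-product and then invoke the Banach-Ne\v{c}as-Babu\v{s}ka theorem.

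For the inf-sup, given $z \in V_\beta^f$ I would apply Lemma \ref{spacedecomp} with $\beta$ replaced by $-\beta$ to the rescaled field $\boldsymbol{\eta} := \dap \nabla z$, which lies in $L^2_{-\beta}(\Omega;\mathbb{R}^d)$ since $\int_\Omega |\dap \nabla z|^2 \dam \, dx = \TwoNorm{\nabla z}{\Omega}^2$. This produces $\hat{z} \in \Hdirm{\Omega}$ and $\hat{\boldsymbol{\sigma}} \in L^2_{-\beta}(\Omega;\mathbb{R}^d)$ with $\dap \nabla z = \nabla \hat{z} + \hat{\boldsymbol{\sigma}}$, where $\hat{\boldsymbol{\sigma}}$ is $A$-orthogonal to $\nabla \phi$ for every $\phi \in \Hdir{\Omega}$. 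Since $z \in V_\beta^f \subset \Hdir{\Omega}$, testing this orthogonality with $\phi = z$ cancels the $\hat{\boldsymbol{\sigma}}$ contribution and yields
\[
\int_{\Omega} A \nabla z \cdot \nabla \hat{z} \, dx = \int_{\Omega} A \nabla z \cdot \nabla z \, \dap \, dx \geq \gamma_1 \TwoNorm{\nabla z}{\Omega}^2,
\]
together with the stability estimate $\TwoNormminus{\nabla \hat{z}}{\Omega} \lesssim \TwoNorm{\nabla z}{\Omega}$.

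The principal obstacle is converting $\hat{z}$ into an admissible test function $w \in V_{-\beta}^f$ without losing the lower bound. My candidate is the fine-scale component of $\hat{z}$ in the $-\beta$ LOD splitting, equivalently $w := \hat{z} - \Qint{-\beta}(\hat{z}) + Q^{-\beta}_{\Omega}(\Qint{-\beta}(\hat{z}))$, which lies in $V_{-\beta}^f$ with $\TwoNormminus{\nabla w}{\Omega} \lesssim \TwoNorm{\nabla z}{\Omega}$ by Lemma \ref{stablelemma} and Proposition \ref{correctorPropBound}. The difficulty is that the residual $\int_{\Omega} A \nabla z \cdot \nabla(\hat{z} - w) \, dx$ does not vanish directly: the LOD orthogonality carried by $\hat{z} - w \in V_H^{ms,-\beta}$ against $V_{-\beta}^f$ contains the weight $\dam$, while the pairing in $a$ is unweighted. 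The hard part will be to show that this residual can be absorbed by a strict fraction of $\gamma_1 \TwoNorm{\nabla z}{\Omega}^2$, which I expect to handle by combining the local approximability of $\Qint{-\beta}$ from Lemma \ref{stablelemma}, the defining equation \eqref{corrector} for $Q^{-\beta}_{\Omega}$, and the Muckenhoupt bounds of Proposition \ref{alphaMuckenhoupt}, adapting the argument of \cite[Lemma 2.1]{d2012finite} to the fine-scale setting.

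Once the inf-sup is established, Banach-Ne\v{c}as-Babu\v{s}ka delivers existence and uniqueness of $z$ together with the bound $\TwoNorm{\nabla z}{\Omega} \lesssim \TwoNorm{\boldsymbol{\tau}}{\Omega}$; the triangle inequality then gives $\TwoNorm{\boldsymbol{\sigma}}{\Omega} \lesssim \TwoNorm{\boldsymbol{\tau}}{\Omega}$. The direct-sum characterization $L^2_{\beta}(\Omega;\mathbb{R}^d) = \nabla V_\beta^f \oplus (\nabla V_{-\beta}^f)^{\perp_A}$ then reads as the conjunction of surjectivity (the decomposition just established) and triviality of $\nabla V_\beta^f \cap (\nabla V_{-\beta}^f)^{\perp_A}$, which is precisely the injectivity part of the inf-sup applied to any $z \in V_\beta^f$ with $\int_{\Omega} A \nabla z \cdot \nabla w\, dx = 0$ for all $w \in V_{-\beta}^f$.
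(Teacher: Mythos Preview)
Your overall framework---recast as a variational problem on $V_\beta^f \times V_{-\beta}^f$, verify an inf-sup, invoke Banach--Ne\v{c}as--Babu\v{s}ka---is correct and is precisely what underlies the abstract result the paper cites. The paper's own proof is a one-liner: it observes that the argument of \cite[Lemma~2.1]{d2012finite} is formulated for a generic pair of closed subspaces $X_1 \subset L^2_\beta(\Omega;\mathbb{R}^d)$, $X_2 \subset L^2_{-\beta}(\Omega;\mathbb{R}^d)$ and goes through verbatim once Poincar\'e-type inequalities are available on those subspaces. Taking $X_1 = V_\beta^f$, $X_2 = V_{-\beta}^f$ and noting that $\Qint{\pm\beta} v = 0$ on $V_{\pm\beta}^f$ supplies the needed Poincar\'e via the approximability estimate \eqref{stableproj1} of Lemma~\ref{stablelemma}.

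Your concrete construction of the inf-sup test function, however, takes a detour that creates a genuine obstacle. You produce $\hat z \in \Hdirm{\Omega}$ from the full-space decomposition (Lemma~\ref{spacedecomp}) and then try to push it into $V_{-\beta}^f$ via the LOD splitting. As you yourself note, the residual $\hat z - w$ lands in $V_H^{ms,-\beta}$, whose $A$-orthogonality to $V_{-\beta}^f$ carries the weight $\dam$, whereas the form $a$ is unweighted and $z$ sits in $V_\beta^f$, not $V_{-\beta}^f$. There is no evident mechanism to make $\int_\Omega A\nabla z \cdot \nabla(\hat z - w)\,dx$ a strict fraction of $\gamma_1 \TwoNorm{\nabla z}{\Omega}^2$: a crude Cauchy--Schwarz bound gives only $\lesssim \gamma_2 \TwoNorm{\nabla z}{\Omega}^2$, with hidden constants inherited from Lemma~\ref{spacedecomp} and Proposition~\ref{correctorPropBound} that need not be small. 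The remedy is not to bootstrap from the full-space result and then project, but to run the inf-sup argument of \cite[Lemma~2.1]{d2012finite} \emph{directly} inside the fine-scale spaces from the outset---which is exactly what the paper invokes.
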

\begin{proof}
	Here the proof is the same as \cite[Lemma 2.1]{d2012finite}, with $M_1=L_{\beta}^2(\dmn;\mathbb{R}^d)$, $M_2=L_{-\beta}^2(\dmn;\mathbb{R}^d)$, $X_1=V_{\beta}^f$, and $X_2=V_{-\beta}^f$. This is primarily  due to having appropriate Poincar\'{e} inequalities in this setting. 
 \qed
\end{proof}
%
We have the following error for the approximation computed from  \eqref{msvarform}.
\begin{theorem}\label{errorglobal}
	Let 
	$\alpha \in \left(\frac{d-\ell}{2}-1,\frac{d-\ell}{2}\right)$.
	Suppose that $u\in \Hdira{\dmn}$ satisfies \eqref{varform} with source term $f\delta_{\Lambda}$, $f\in L^2(\Lambda)$, and that $u_{H}^{ms}\in {V}_{H}^{ms,\alpha}$ satisfies \eqref{msvarform}.  Then, we have the following error estimate
	\begin{align}\label{eq:errorglobal}
	\TwoNorma{\nabla u-\nabla u_{H}^{ms}}{\dmn}\lesssim H^{\alpha-\frac{d-\ell}{2}+1   } \norm{f}_{L^2(\Lambda)}.
	\end{align}
\end{theorem}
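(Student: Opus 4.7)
The plan is to combine Petrov--Galerkin orthogonality, the inf--sup structure behind Theorem~\ref{existence}, and the weighted trace estimate of Corollary~\ref{traceHlemmaInteropolant}, which supplies the target rate $H^{\alpha-(d-\ell)/2+1}$. Setting $e:=u-u_H^{ms}\in\Hdira{\dmn}$, subtracting \eqref{msvarform} from \eqref{varform} with $\psi=v\in V_H^{ms,-\alpha}\subset\Hdirduala{\dmn}$ produces the Galerkin orthogonality $a(e,v)=0$ for every $v\in V_H^{ms,-\alpha}$. The continuous inf--sup condition underlying Theorem~\ref{existence} then reduces the bound on $\TwoNorma{\nabla e}{\dmn}$ to controlling the residual $a(e,\psi)$ uniformly over $\psi\in\Hdirduala{\dmn}$ normalized by $\TwoNormma{\nabla\psi}{\dmn}=1$.

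For any such $\psi$ I would construct the multiscale test lift $\psi^{ms}:=\Qint{-\alpha}\psi-Q^{-\alpha}_{\dmn}(\Qint{-\alpha}\psi)\in V_H^{ms,-\alpha}$ and set $\xi:=\psi-\psi^{ms}$. A direct computation using that $\Qint{-\alpha}$ is a projection and that $Q^{-\alpha}_{\dmn}$ maps into $V_{-\alpha}^f=\ker\Qint{-\alpha}$ gives $\Qint{-\alpha}\xi=0$, so $\xi\in V_{-\alpha}^f$. Stability of $\Qint{-\alpha}$ from Lemma~\ref{stablelemma} together with the corrector bound of Proposition~\ref{correctorPropBound} (applied with $\beta=-\alpha$) yields $\TwoNormma{\nabla\xi}{\dmn}\lesssim\TwoNormma{\nabla\psi}{\dmn}$. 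Galerkin orthogonality gives $a(e,\psi)=a(e,\xi)$, and since $\xi\in\Hdirduala{\dmn}$, the continuous equation \eqref{varform} evaluates $a(u,\xi)=\int_\Lambda f\xi\,ds$, hence
\[
a(e,\xi)=\int_\Lambda f\xi\,ds-a(u_H^{ms},\xi).
\]

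The source term is handled directly: since $\Qint{-\alpha}\xi=0$, Corollary~\ref{traceHlemmaInteropolant} applied to $\xi$ yields $\norm{\xi}_{L^2(\Lambda)}\lesssim H^{\alpha-(d-\ell)/2+1}\TwoNormma{\nabla\xi}{T}$, so that Cauchy--Schwarz on $\Lambda$ combined with the stability $\TwoNormma{\nabla\xi}{\dmn}\lesssim 1$ produces the target scaling $H^{\alpha-(d-\ell)/2+1}\norm{f}_{L^2(\Lambda)}$.  Passing to the supremum over $\psi$ would then complete the estimate.

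The main obstacle is disposing of the companion term $a(u_H^{ms},\xi)$: because $\xi$ lies in $V_{-\alpha}^f$ rather than in the discrete test space $V_H^{ms,-\alpha}$, the Galerkin equation \eqref{msvarform} does not directly evaluate against it.  My plan is to absorb this contribution by invoking the $A\dapa$-orthogonality of $u_H^{ms}$ to $V_\alpha^f$ that is built into the corrector definition \eqref{corrector}, combined with the fine-scale decomposition of Theorem~\ref{fine-scale-decomp} applied to $\nabla\xi\in L^2_{-\alpha}(\dmn;\mathbb{R}^d)$: writing $\nabla\xi=\nabla z+\sigma$ with $z\in V_{-\alpha}^f$ and $\sigma$ $A$-orthogonal to $\nabla V_\alpha^f$, the $\sigma$-contribution against the corrector part of $u_H^{ms}=u_{H,0}-Q^{\alpha}_{\dmn}u_{H,0}$ vanishes by this orthogonality, while the $\nabla z$-piece is estimated by reapplying the trace argument to $z\in V_{-\alpha}^f$.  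Putting all contributions together and taking the supremum over $\psi$ then yields \eqref{eq:errorglobal}.
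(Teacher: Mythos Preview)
Your route via the continuous inf--sup of Theorem~\ref{existence} plus Galerkin orthogonality is not the paper's, and the obstacle you isolate is a genuine gap that your sketch does not close. Decomposing $\nabla\xi=\nabla z+\boldsymbol{\sigma}$ through Theorem~\ref{fine-scale-decomp} only kills $\int_\Omega A\,\boldsymbol{\sigma}\,\nabla Q^{\alpha}_{\Omega}u_{H,0}\,dx$ (because $Q^{\alpha}_{\Omega}u_{H,0}\in V_\alpha^f$); the coarse piece $\int_\Omega A\,\boldsymbol{\sigma}\,\nabla u_{H,0}\,dx$ survives with no $H$-power attached, and the remaining term $\int_\Omega A\nabla u_H^{ms}\,\nabla z\,dx$ has no trace structure on $\Lambda$, so ``reapplying the trace argument to $z$'' yields nothing usable. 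Neither leftover can be bounded by $H^{\alpha-(d-\ell)/2+1}\norm{f}_{L^2(\Lambda)}$ with the tools at hand.

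The paper proceeds more directly and avoids this detour entirely. It first records, from the $A\dapa$-orthogonal splitting $\Hdira{\Omega}=V_H^{ms,\alpha}\oplus V_\alpha^f$, that the error itself is fine-scale: $u^f:=u-u_H^{ms}\in V_\alpha^f$. Then it applies Theorem~\ref{fine-scale-decomp} with $\beta=-\alpha$ to the \emph{specific} vector $\boldsymbol{\tau}:=\dapa\nabla u^f\in L^2_{-\alpha}(\Omega;\mathbb{R}^d)$, producing a single test function $v^f\in V_{-\alpha}^f$ and a remainder $\boldsymbol{\sigma}$ with $\int_\Omega A\nabla u^f\,\boldsymbol{\sigma}\,dx=0$ (valid precisely because $u^f\in V_\alpha^f$). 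This immediately gives the lower bound
\[
\int_\Omega A\nabla u^f\,\nabla v^f\,dx=\int_\Omega A\nabla u^f\,\boldsymbol{\tau}\,dx\gtrsim\TwoNorma{\nabla u^f}{\Omega}^2\gtrsim\TwoNorma{\nabla u^f}{\Omega}\,\TwoNormma{\nabla v^f}{\Omega},
\]
and the same quantity is bounded above by $\int_\Lambda f\,v^f\,ds$, after which $\Qint{-\alpha}v^f=0$ and Corollary~\ref{traceHlemmaInteropolant} finish the job. The essential difference is that the paper builds one tailored $v^f$ from the weighted gradient of the error, rather than trying to control $a(e,\psi)$ for arbitrary $\psi$; this is what removes the need to estimate $a(u_H^{ms},\cdot)$ against fine-scale test functions.
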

\begin{proof}
 {
	From the orthogonal splitting of the spaces we have that  $ u- u_{H}^{ms}=u^f \in {V}^f_{\alpha}$. 
	Thus,  by taking $\boldsymbol{\tau} = \nabla u^f \dapa\in L_{-\alpha}^2(\dmn;\mathbb{R}^d)$ and by Theorem \ref{fine-scale-decomp}, there exists a $\boldsymbol{\sigma} \in L_{-\alpha}^2(\dmn;\mathbb{R}^d)$ and  a $v^f \in V^f_{-\alpha}$ so that $\boldsymbol{\tau}=\nabla v^f+\boldsymbol{\sigma}$, where 
	  $ \int_{\Omega}A \nabla u^f \boldsymbol{\sigma}\,  dx =0$,
	  and $ \TwoNorma{ \nabla u^f}{\Omega}= \TwoNormminusa{\boldsymbol{\tau} }{\Omega}\gtrsim  \TwoNormma{\nabla  v^f}{\Omega}$.
	  We have
	\begin{align*}
  	\int_{\Omega}A \nabla u^f \nabla v^f dx =  \int_{\Omega}A \nabla u^f \boldsymbol{\tau} dx-\int_{\Omega}A \nabla u^f \boldsymbol{\sigma} dx \gtrsim \TwoNorma{\nabla u^f}{\Omega}^2\gtrsim \TwoNorma{\nabla  u^f}{\Omega}\TwoNormma{\nabla  v^f}{\Omega}.
	\end{align*}		    
	Using the above together with $\Qint{-\alpha}(v^f)=0$, and the trace estimate of Corollary \ref{traceHlemmaInteropolant},
	we have 	
	\begin{align*}
	\TwoNorma{\nabla  u^f}{\Omega}\TwoNormma{\nabla  v^f}{\Omega}\lesssim & \int_{\dmn}A \nabla u^f \nabla v^f\,   dx=\int_{\Lambda }  f  (v^f-\Qint{-\alpha}(v^f)) ds\\
	&\lesssim \norm{f}_{L^2(\Lambda)}\norm{ v^f-\Qint{-\alpha}(v^f)}_{L^2(\Lambda)} \lesssim   H^{\alpha-\frac{d-\ell}{2}+1   }   \TwoNormminusa{  \nabla v^f}{ \dmn} \norm{f}_{L^2(\Lambda)}.
	\end{align*}
 Dividing the last $\TwoNormma{\nabla v^f}{\dmn}$ term yields the result.   \qed
 }
\end{proof}

\subsection{Error with Localization}
In this  section,  we show the error due to truncation with respect to patch extensions. The standard result holds here, similarly to that in \cite{brown2017numerical} and the references therein.
The key lemma needed is the following estimate, the proof is   standard and for completeness  can be found in Appendix \ref{truncproofsection}. 
\begin{lemma}\label{localglobal.derp}
	Let $\beta \in \left(-\frac{d-\ell}{2},\frac{d-\ell}{2}\right)$ and  $u_{H}\in {V}_{H}\subset \Hdir{\Omega}$, let $Q^{\beta}_{k}$ be constructed from \eqref{Qcorrector} and \eqref{Qcorrectorbasis.global}, and $Q^{\beta}_{\dmn}$ defined to be the ``ideal" corrector without truncation in \eqref{corrector}, then
	\begin{align}
	&	\TwoNorm{\nabla( Q^{\beta}_{\dmn}(u_{H})-Q^{\beta}_{k}(u_{H}))  }{\dmn} 	\lesssim  k^{\frac{d}{2}} \theta^{k  }   \TwoNorm{ \nabla u_{H} }{\dmn}, 
	\end{align}
	with  $\theta\in (0,1)$.
\end{lemma}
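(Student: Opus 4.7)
The plan is to follow the standard LOD exponential decay argument, adapted to the distance-weighted setting where the bilinear form has the integrating weight $\dap$. First, I would decompose the global ideal corrector via the partition of unity into nodewise pieces
\[
Q^{\beta}_{\dmn}(u_{H}) = \sum_{\vnode \in {\cal N}_{int}} Q^{\beta}_{\vnode,\dmn}(u_{H}),
\]
where $Q^{\beta}_{\vnode,\dmn}(u_H)\in V_\beta^f$ solves the global problem with right-hand side $\int A \hat\lambda_\vnode \nabla u_H \nabla w \, \dap \, dx$, so that the error decouples as $Q^\beta_\dmn(u_H)-Q^\beta_k(u_H) = \sum_\vnode (Q^\beta_{\vnode,\dmn}-Q^\beta_{\vnode,k})(u_H)$. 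The strategy is then to prove that each nodal error $\phi_\vnode := (Q^{\beta}_{\vnode,\dmn}-Q^{\beta}_{\vnode,k})(u_H)$ decays exponentially in $k$ away from $\vnode$ in the weighted $H^1$ norm, and finally to reassemble via Cauchy--Schwarz with a finite-overlap argument, which contributes the polynomial prefactor $k^{d/2}$.

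The heart of the matter is the Caccioppoli-type exponential decay for $\phi_\vnode$. Here I would introduce a Lipschitz cutoff function $\eta$ equal to $1$ on $\dmn\setminus \omega_{\vnode,k}$ and $0$ on $\omega_{\vnode,k-1}$, with $|\nabla \eta|\lesssim H^{-1}$ supported on the annular layer $R:=\omega_{\vnode,k}\setminus\omega_{\vnode,k-1}$. Since $Q^\beta_{\vnode,k}$ is the Galerkin projection of $Q^\beta_{\vnode,\dmn}$ onto $V_\beta^f(\omega_{\vnode,k})$ with respect to the weighted bilinear form, I can use the Galerkin orthogonality and insert $\Qint{\beta}(\eta\phi_\vnode)\in V_H$ (so $\eta\phi_\vnode-\Qint{\beta}(\eta\phi_\vnode)\in V_\beta^f$, and since $\eta$ vanishes near $\vnode$ the quasi-interpolation is supported outside $\omega_{\vnode,k-2}$). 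Testing the defining relation against this cutoff difference, using coercivity/boundedness of $A$, the weighted Leibniz-type estimate, and the stability of $\Qint{\beta}$ from Lemma \ref{stablelemma}, I obtain an inequality of the form
\[
\TwoNorm{\nabla \phi_\vnode}{\dmn\setminus \omega_{\vnode,k}}^2 \lesssim C_0\TwoNorm{\nabla \phi_\vnode}{R}^2,
\]
for some $C_0>0$ depending only on $\gamma_1,\gamma_2$, the Muckenhoupt constant $C_{2,\beta}$ (here the bound \eqref{Muckenhouptconstant.patch} is crucial so that the weight behaves uniformly on patches), and the shape regularity.

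From this one-step estimate, adding $\TwoNorm{\nabla\phi_\vnode}{R}^2$ to both sides gives $\TwoNorm{\nabla \phi_\vnode}{\dmn\setminus \omega_{\vnode,k-1}}^2\leq \tfrac{C_0}{C_0+1}\TwoNorm{\nabla \phi_\vnode}{\dmn\setminus \omega_{\vnode,k}}^2$, and iterating $k$ times yields the geometric bound $\TwoNorm{\nabla \phi_\vnode}{\dmn\setminus\omega_\vnode}\lesssim \theta^k \TwoNorm{\nabla \phi_\vnode}{\dmn}$ with $\theta=(C_0/(C_0+1))^{1/2}\in(0,1)$, and combined with the a priori bound $\TwoNorm{\nabla\phi_\vnode}{\dmn}\lesssim \TwoNorm{\hat\lambda_\vnode \nabla u_H}{\omega_\vnode}$ that follows from Proposition \ref{correctorPropBound}, I obtain
\[
\TwoNorm{\nabla \phi_\vnode}{\dmn}\lesssim \theta^k \TwoNorm{\nabla u_H}{\omega_\vnode}.
\]

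Finally, to sum the nodal errors I apply Cauchy--Schwarz:
\[
\TwoNorm{\nabla \sum_\vnode \phi_\vnode}{\dmn}^2 \lesssim \Bigl(\sum_\vnode \mathbf{1}_{\mathrm{supp}(\phi_\vnode)}\Bigr) \sum_\vnode \TwoNorm{\nabla\phi_\vnode}{\dmn}^2,
\]
where the pointwise overlap count of the (essentially supported) patches $\omega_{\vnode,k}$ is $\lesssim k^d$, and summing $\sum_\vnode\TwoNorm{\nabla u_H}{\omega_\vnode}^2\lesssim \TwoNorm{\nabla u_H}{\dmn}^2$ together with the bound above gives the stated $k^{d/2}\theta^k$. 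The main obstacle is the weighted Caccioppoli step: one has to be careful that the partition-of-unity cutoff technique and the quasi-interpolation $\Qint{\beta}$ respect the $\dap$-weighting, which is where the uniform Muckenhoupt control \eqref{Muckenhouptconstant.patch} on patches and the local stability \eqref{stableoperator} are indispensable.
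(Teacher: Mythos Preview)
Your overall architecture --- decompose into nodal pieces $\phi_\vnode = Q^\beta_{\vnode,\dmn}(u_H)-Q^\beta_{\vnode,k}(u_H)$, prove exponential decay for each, and reassemble with a $k^{d/2}$ overlap factor --- matches the paper. The gap is in the central step: you cannot run the Caccioppoli iteration on $\phi_\vnode$ itself. The Galerkin orthogonality you invoke says $\int_\dmn A\nabla\phi_\vnode\nabla w\,\dap\,dx=0$ only for $w\in V^f_\beta(\omega_{\vnode,k})$, i.e.\ test functions supported \emph{inside} $\omega_{\vnode,k}$; but your cutoff test function $\eta\phi_\vnode-\Qint{\beta}(\eta\phi_\vnode)$ is supported \emph{outside} $\omega_{\vnode,k-2}$, so the pairing does not vanish and the one-step estimate $\TwoNorm{\nabla\phi_\vnode}{\dmn\setminus\omega_{\vnode,k}}^2\lesssim C_0\TwoNorm{\nabla\phi_\vnode}{R}^2$ does not follow. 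Even granting that inequality, your iteration is written with the indices reversed, and the final implication is a non sequitur: from $\TwoNorm{\nabla\phi_\vnode}{\dmn\setminus\omega_\vnode}\lesssim\theta^k\TwoNorm{\nabla\phi_\vnode}{\dmn}$ together with $\TwoNorm{\nabla\phi_\vnode}{\dmn}\lesssim\TwoNorm{\nabla u_H}{\omega_\vnode}$ you cannot conclude $\TwoNorm{\nabla\phi_\vnode}{\dmn}\lesssim\theta^k\TwoNorm{\nabla u_H}{\omega_\vnode}$.

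The paper closes this gap in two stages. First, the Caccioppoli iteration (Lemma~\ref{decaylemma}) is applied to $Q^\beta_{\vnode,\dmn}(u_H)$, which genuinely satisfies a homogeneous fine-scale equation outside $\omega_\vnode$, yielding $\TwoNorm{\nabla Q^\beta_{\vnode,\dmn}(u_H)}{\dmn\setminus\omega_{\vnode,k}}\lesssim\theta^k\TwoNorm{\nabla Q^\beta_{\vnode,\dmn}(u_H)}{\dmn}$. Second, to convert decay of the ideal corrector into smallness of $\phi_\vnode$ on all of $\dmn$, the paper uses C\'ea's lemma, $\TwoNorm{\nabla\phi_\vnode}{\dmn}\le\inf_{q_\vnode\in V^f_\beta(\omega_{\vnode,k})}\TwoNorm{\nabla(Q^\beta_{\vnode,\dmn}(u_H)-q_\vnode)}{\dmn}$, and then constructs an explicit competitor $q_\vnode=(1-\eta^{k-1,1}_\vnode)Q^\beta_{\vnode,\dmn}(u_H)-\Qint{\beta}\bigl((1-\eta^{k-1,1}_\vnode)Q^\beta_{\vnode,\dmn}(u_H)\bigr)$ whose error is controlled by the tail of $Q^\beta_{\vnode,\dmn}(u_H)$ via Lemma~\ref{qi}. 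This best-approximation step is the missing ingredient in your sketch.
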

\begin{proof}
	See Appendix \ref{truncproofsection}. \qed
\end{proof}
We then are able to derive an error bound with localized correctors.
\begin{theorem}\label{errorlocal}
	Let 
	$\alpha \in \left(\frac{d-\ell}{2}-1,\frac{d-\ell}{2}\right)$.
	Suppose that $u\in \Hdira{\dmn}$ satisfies \eqref{varform} with source term $f\delta_{\Lambda}, f\in L^2(\Lambda)$, and that $u_{H,k}^{ms}\in {V}_{H,k}^{ms,\alpha}$, with local correctors  calculated from \eqref{Qcorrector},
	satisfies \eqref{localmsvarform}.  Then, we have the following error estimate 
	\begin{align}\label{eq:errorlocal1}
	\TwoNorma{\nabla u-\nabla u_{H,k}^{ms}}{\dmn}\lesssim \left(  H^{\alpha-\frac{d-\ell}{2}+1   } + k^{\frac{d}{2}}  \theta^{k  }\right)\norm{f}_{L^2(\Lambda)},
	\end{align}
	with $\theta \in (0,1)$.
\end{theorem}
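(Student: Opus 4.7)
The strategy is a Strang-type argument: I would split the error into the ideal multiscale error, already controlled by Theorem~\ref{errorglobal}, plus a perturbation from replacing the ideal corrector by its truncation, controlled by Lemma~\ref{localglobal.derp}. By the triangle inequality
\[
\TwoNorma{\nabla(u-u_{H,k}^{ms})}{\Omega}\le\TwoNorma{\nabla(u-u_{H}^{ms})}{\Omega}+\TwoNorma{\nabla(u_{H}^{ms}-u_{H,k}^{ms})}{\Omega},
\]
the whole task reduces to bounding the second summand, since Theorem~\ref{errorglobal} gives $H^{\alpha-(d-\ell)/2+1}\|f\|_{L^2(\Lambda)}$ for the first.

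To handle $u_{H}^{ms}-u_{H,k}^{ms}$, I would introduce a bridge element $\tilde u\in V_{H,k}^{ms,\alpha}$ sharing its coarse part with $u_H^{ms}$. Setting $w_H:=\Qint{\alpha}(u_H^{ms})\in V_H$, the fact that $\Qint{\alpha}$ is a projection and vanishes on $V_\alpha^f$ gives the unique representation $u_H^{ms}=w_H-Q_\Omega^{\alpha}(w_H)$, and I define $\tilde u:=w_H-Q_{k}^{\alpha}(w_H)\in V_{H,k}^{ms,\alpha}$. Then Lemma~\ref{localglobal.derp} applied to $w_H$, combined with the stability of $\Qint{\alpha}$ (Lemma~\ref{stablelemma}) and the a priori bound $\TwoNorma{\nabla u_H^{ms}}{\Omega}\lesssim\|f\|_{L^2(\Lambda)}$ (which follows from Corollary~\ref{apriori} and Theorem~\ref{errorglobal}), yields
\[
\TwoNorma{\nabla(u_H^{ms}-\tilde u)}{\Omega}=\TwoNorma{\nabla(Q_\Omega^{\alpha}-Q_k^{\alpha})w_H}{\Omega}\lesssim k^{d/2}\theta^{k}\TwoNorma{\nabla w_H}{\Omega}\lesssim k^{d/2}\theta^{k}\|f\|_{L^2(\Lambda)}.
\]

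It remains to compare $\tilde u$ with $u_{H,k}^{ms}$ inside $V_{H,k}^{ms,\alpha}$. The natural route is a discrete inf-sup for $a(\cdot,\cdot)$ on the truncated cross-product $V_{H,k}^{ms,\alpha}\times V_{H,k}^{ms,-\alpha}$: to any $w\in V_{H,k}^{ms,\alpha}$ I would associate its ideal counterpart $\bar w\in V_H^{ms,\alpha}$ (same coarse part but with $Q_\Omega^{\alpha}$), produce an ideal test function in $V_H^{ms,-\alpha}$ via the fine-scale decomposition in Theorem~\ref{fine-scale-decomp}, and transfer it to $V_{H,k}^{ms,-\alpha}$ at a cost $k^{d/2}\theta^{k}$ controlled by Lemma~\ref{localglobal.derp} with the weight $-\alpha$ (valid since the constraint $\alpha\in(\tfrac{d-\ell}{2}-1,\tfrac{d-\ell}{2})$ and $\ell\le d-1$ place $-\alpha$ in the Muckenhoupt range $(-\tfrac{d-\ell}{2},\tfrac{d-\ell}{2})$). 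For $k$ large enough this perturbation is absorbed, giving a quasi-optimality estimate; combined with the Galerkin orthogonality $a(u-u_{H,k}^{ms},v)=0$ on $V_{H,k}^{ms,-\alpha}$, the trace tool in Corollary~\ref{traceHlemmaInteropolant}, and the bridge bound above, this closes the estimate \eqref{eq:errorlocal1}.

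\textbf{Main obstacle.} The delicate point is this last inf-sup transfer: since $V_{H,k}^{ms,-\alpha}\not\subset V_H^{ms,-\alpha}$ (the truncated correctors are not the ideal ones), one cannot directly inherit the ideal stability. The exponential decay in Lemma~\ref{localglobal.derp} applied with $\beta=-\alpha$ is the indispensable ingredient that keeps the perturbation subdominant and preserves the inf-sup, and this is the only genuinely non-routine step in the argument.
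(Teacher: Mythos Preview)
Your proposal follows essentially the same route as the paper: introduce the bridge element $\tilde u=w_H-Q_k^{\alpha}(w_H)$ with $w_H=\Qint{\alpha}(u_H^{ms})$, control $u_H^{ms}-\tilde u$ by Lemma~\ref{localglobal.derp}, and combine with Theorem~\ref{errorglobal} and the stability chain $\TwoNorma{\nabla w_H}{\Omega}\lesssim\TwoNorma{\nabla u_H^{ms}}{\Omega}\lesssim\|f\|_{L^2(\Lambda)}$. The paper organises the triangle inequality slightly differently, first invoking quasi-optimality of the truncated scheme to replace $u_{H,k}^{ms}$ by the bridge $\tilde u$, and only then splitting off the ideal error; you instead split $u-u_{H,k}^{ms}$ at $u_H^{ms}$ and are left with $\tilde u-u_{H,k}^{ms}$.

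The one substantive difference is that the paper dispatches this last comparison in a single line, writing ``by Galerkin approximations being energy minimizers'' to obtain $\TwoNorma{\nabla(u-u_{H,k}^{ms})}{\Omega}\lesssim\TwoNorma{\nabla(u-\tilde u)}{\Omega}$. In the present Petrov--Galerkin setting (trial space $V_{H,k}^{ms,\alpha}$, test space $V_{H,k}^{ms,-\alpha}$, error measured in the weighted norm rather than the energy norm) this is not an automatic C\'ea statement, and your sketch of a discrete inf--sup via perturbation from the ideal pair using Lemma~\ref{localglobal.derp} with exponent $-\alpha$ is precisely what is needed to justify it. So your argument is more careful on this point, at the cost of a mild restriction (``$k$ large enough'') to absorb the perturbation; the paper's version is terser but leaves that step implicit.
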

 
\begin{proof}
	We follow the proof given in \cite{brown2017numerical}. We let $u^{ms}_{H}=u_{H}-Q^\alpha_{\cilt}(u_{H})$ be the ideal global multiscale solution satisfying \eqref{msvarform},
	and $u^{ms}_{H,k}=u_{H,k}-Q^\alpha_{k}(u_{H,k})$ be the corresponding truncated solution to \eqref{localmsvarform}. Then, by Galerkin approximations being energy minimizers we have
	\[
	\TwoNorma{\nabla u-\nabla (u_{H,k}-Q^\alpha_{k}(u_{H,k}))}{\dmn}\lesssim \TwoNorma{\nabla u-\nabla( u_{H}-Q^\alpha_{k}(u_{H}) )}{\dmn}.
	\]
	Using this fact and
	Theorem \ref{errorglobal} and Lemma
	\ref{localglobal.derp} we have
	\begin{align*}
	\TwoNorma{\nabla u-\nabla u_{H,k}^{ms}}{\dmn}&\lesssim\TwoNorma{\nabla u-\nabla( u_{H}-Q^\alpha_{\dmn}(u_{H})+Q^\alpha_{\dmn}(u_{H})-Q^\alpha_{k}(u_H))}{\dmn} \\
	&\leq\TwoNorma{\nabla u-\nabla u_{H}^{ms}}{\dmn}+\TwoNorma{\nabla( Q^\alpha_{\dmn}(u_{H})-Q^\alpha_{k}(u_{H}))  }{\dmn}\\
	&\lesssim H^{\alpha-\frac{d-\ell}{2}+1  } \norm{f}_{L^2(\Lambda)}+ k^{\frac{d}{2}}  \theta^{k   } \TwoNorma{ \nabla u_{H}  }{\dmn}.
	\end{align*}
	In addition note that, by construction $ u_{H}^{}=\Qint{\alpha}(u_{H}^{ms})$.
	Thus, using  local stability \eqref{stableproj2} and a-priori bounds from \eqref{msvarform}, obtained via the trace inequality in Lemma \ref{CanonicalTrace}  and  Corollary \ref{apriori}, we have
	\begin{align*}
	\TwoNorma{ \nabla u^{}_{H} }{\dmn} =
	\TwoNorma{ \nabla \Qint{\alpha}(u_{H}^{ms}) }{\dmn}\lesssim \TwoNorma{ \nabla u_{H}^{ms} }{\dmn}\lesssim \norm{f}_{L^2(\Lambda)}.
	\end{align*}
 	Thus, applying the above, we obtain our bound. 
	\qed
\end{proof}

\section{Numerical Examples}\label{numericssection}
In this section we present numerical experiments for the unit square
$\Omega = (0,1)^2$ with three different singular source terms. First,
a single singular source with $f=1$ at $(1/2,1/2)$, then a singular sink and source with $f=-1$ at $(3/4,1/4)$ and $f=1$ at $(1/4,3/4)$, finally a singular line fracture with $f=1$ along $(3/2^3,1/2)\times(5/2^3,1/2)$.
As indicated from the theory, for the point singular sources we consider $\alpha\in (0,1)$, while for the
line fracture we consider the range $\alpha\in (-1/2,1/2)$.
In the following we present the results for two different types of
multiscale permeabilities $A$, the first one being  a highly oscillatory periodic coefficient and the second one 
is constructed from the SPE10 benchmark data.
For numerical efficiency we chose $k=3$ layers for the localized corrector problems
in all numerical experiments.
Since for each problem below the solution $u$ is unknown, we compare the multiscale approximations 
$u_{H,3}^{ms}$, for $H=2^{-1},\ldots,2^{-5}$, to a reference approximation $u_h$ on
a fine grid with $h=2^{-9}$.

\subsection{Highly Oscillatory Example}
\begin{figure}
\centering
\includegraphics[width=0.32\textwidth]{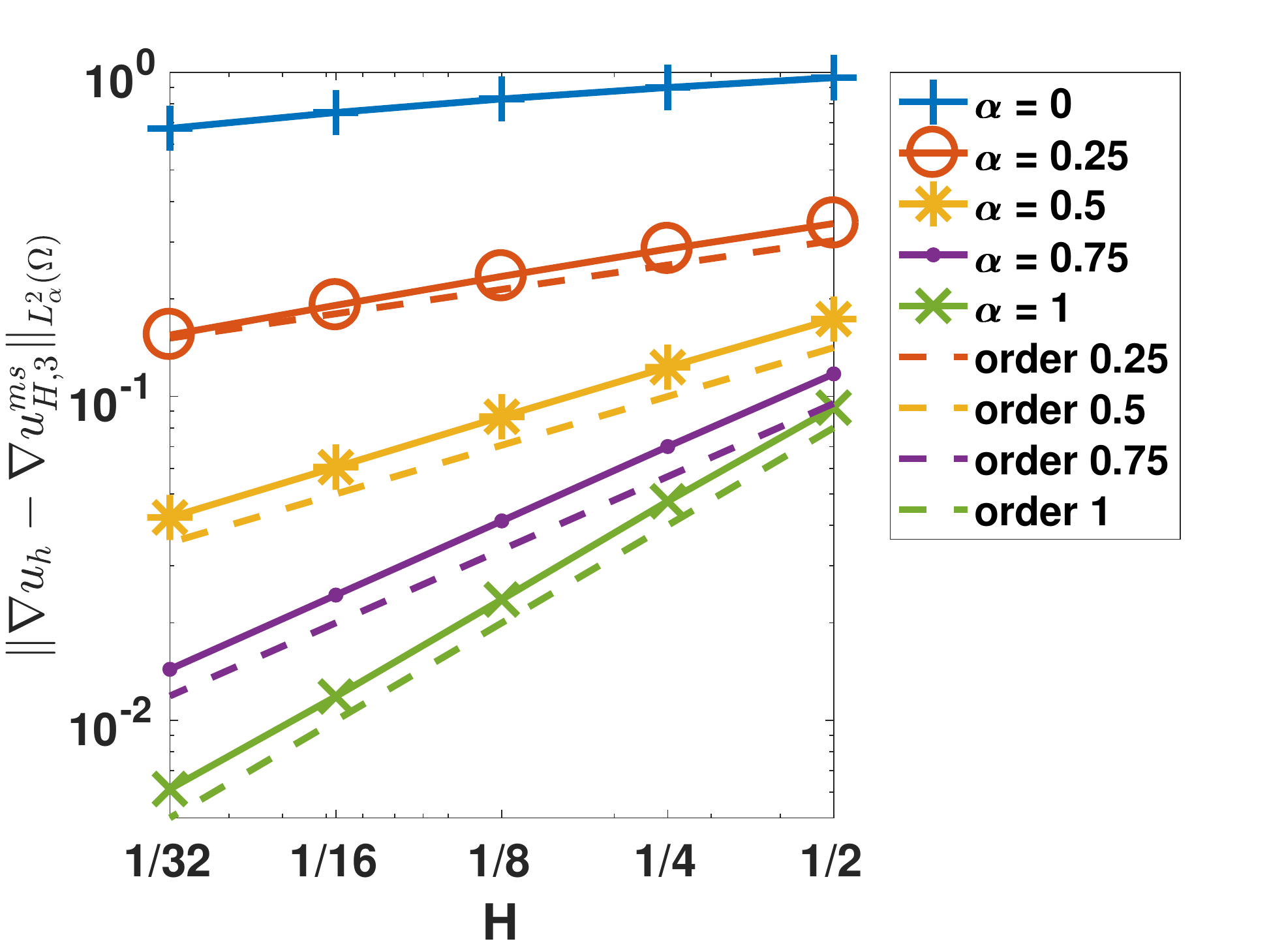}
\includegraphics[width=0.32\textwidth]{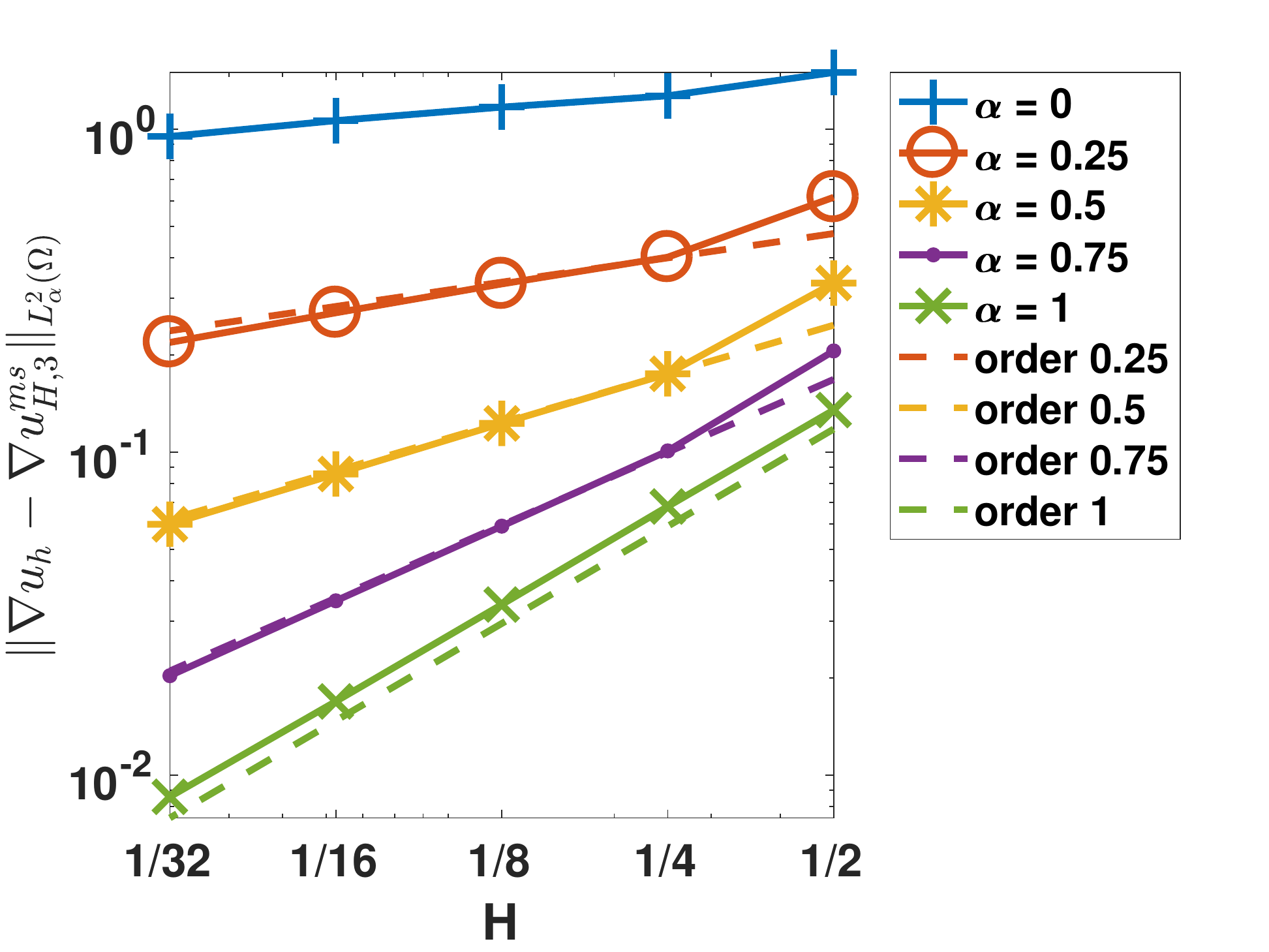}
\includegraphics[width=0.32\textwidth]{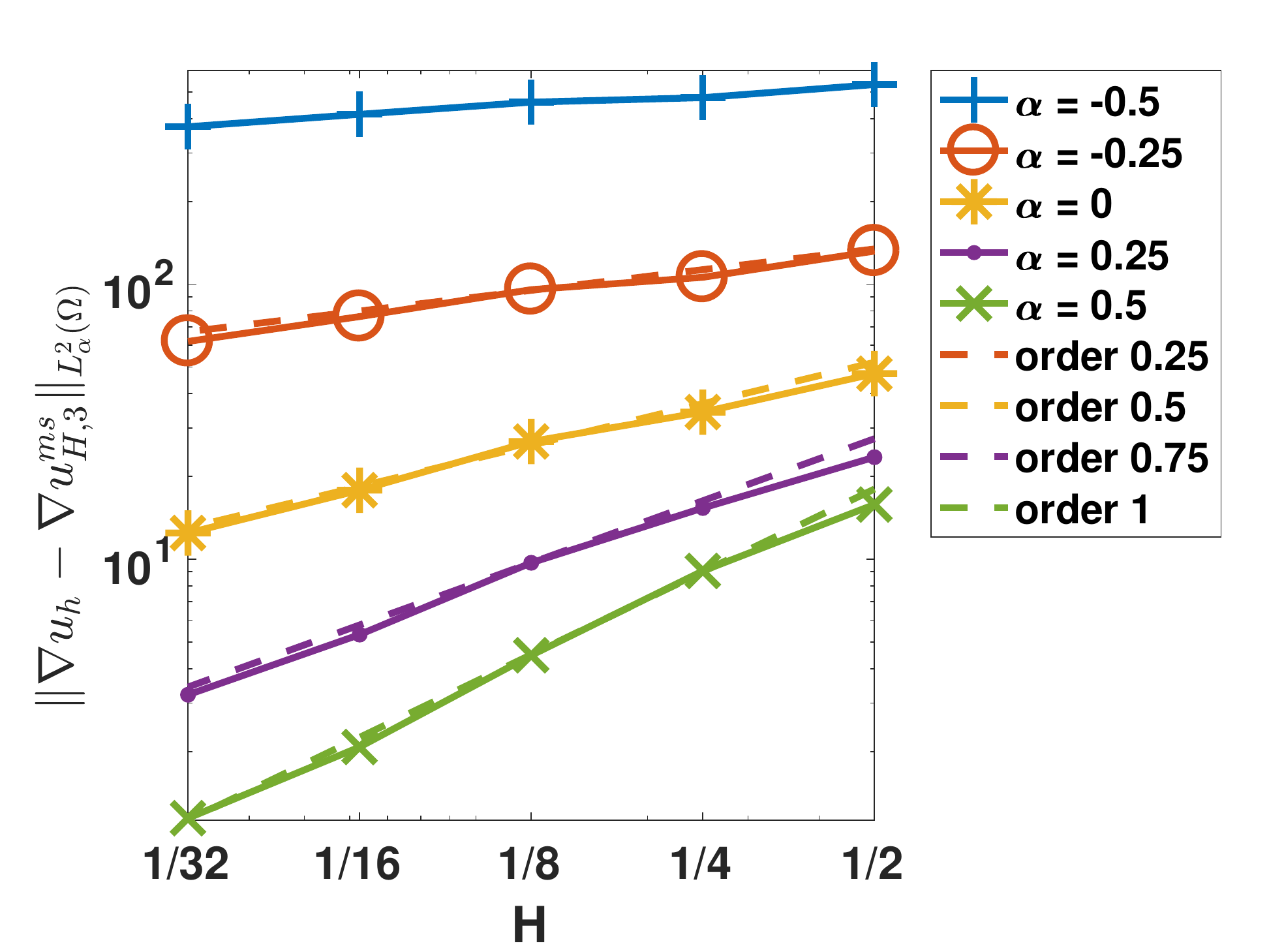}
\caption{Convergence histories for the highly oscillatory example with point source (left), point sink together with point source (middle), and line source (right).}
\label{fig:sinus}
\end{figure}
In this example we consider a highly oscillatory permeability
\[
  A(x) = 1+\frac{1}{4}\left(\sin\left(\frac{\pi x_1}{2^5}\right)+\sin\left(\frac{\pi x_2}{2^5}  \right)\right),
\]
with values between $1/2$ and $3/2$.
Note that none of the coarse meshes with mesh size $H=2^{-1},\ldots,2^{-5}$ resolves the oscillations.
In Figure~\ref{fig:sinus}, we present the convergence results for the three different singular source terms.
For the point singularity with a single source or two point singularities with a sink and a source, we observe 
$\mathcal{O}(H^\alpha)$ convergence of the error $\lVert\nabla u_h-\nabla u_{H,3}^{ms}\rVert_{L^2_\alpha(\Omega)}$
for $\alpha = \frac{1}{4},\frac{1}{2},\frac{3}{4},1$.
For the line fracture we observe the order of convergence $\mathcal{O}(H^{\alpha+1/2})$
for $\alpha = -\frac{1}{4},0,\frac{1}{4},\frac{1}{2}$.
These results confirm the theoretical convergence rates of Theorem~\ref{errorlocal} and
show that the convergence is independent of the highly oscillating coefficient.

\subsection{Heterogeneous Example}
\begin{figure}
\centering
\includegraphics[width=0.32\textwidth]{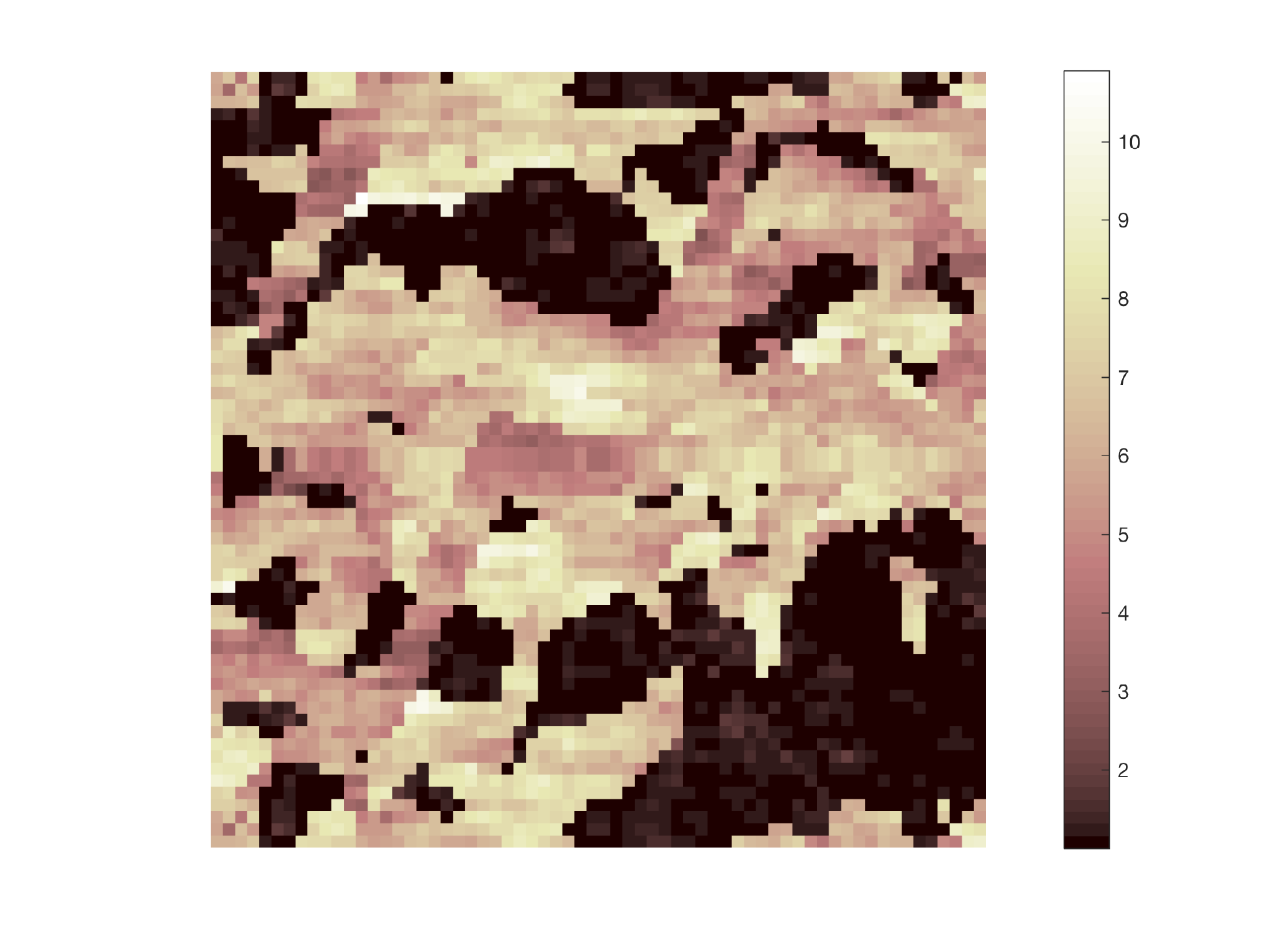}
\caption{Permeability in the heterogeneous example with scaled data from the SPE10 data.}
\label{fig:permeability}
\end{figure}
\begin{figure}
\centering
\includegraphics[width=0.32\textwidth]{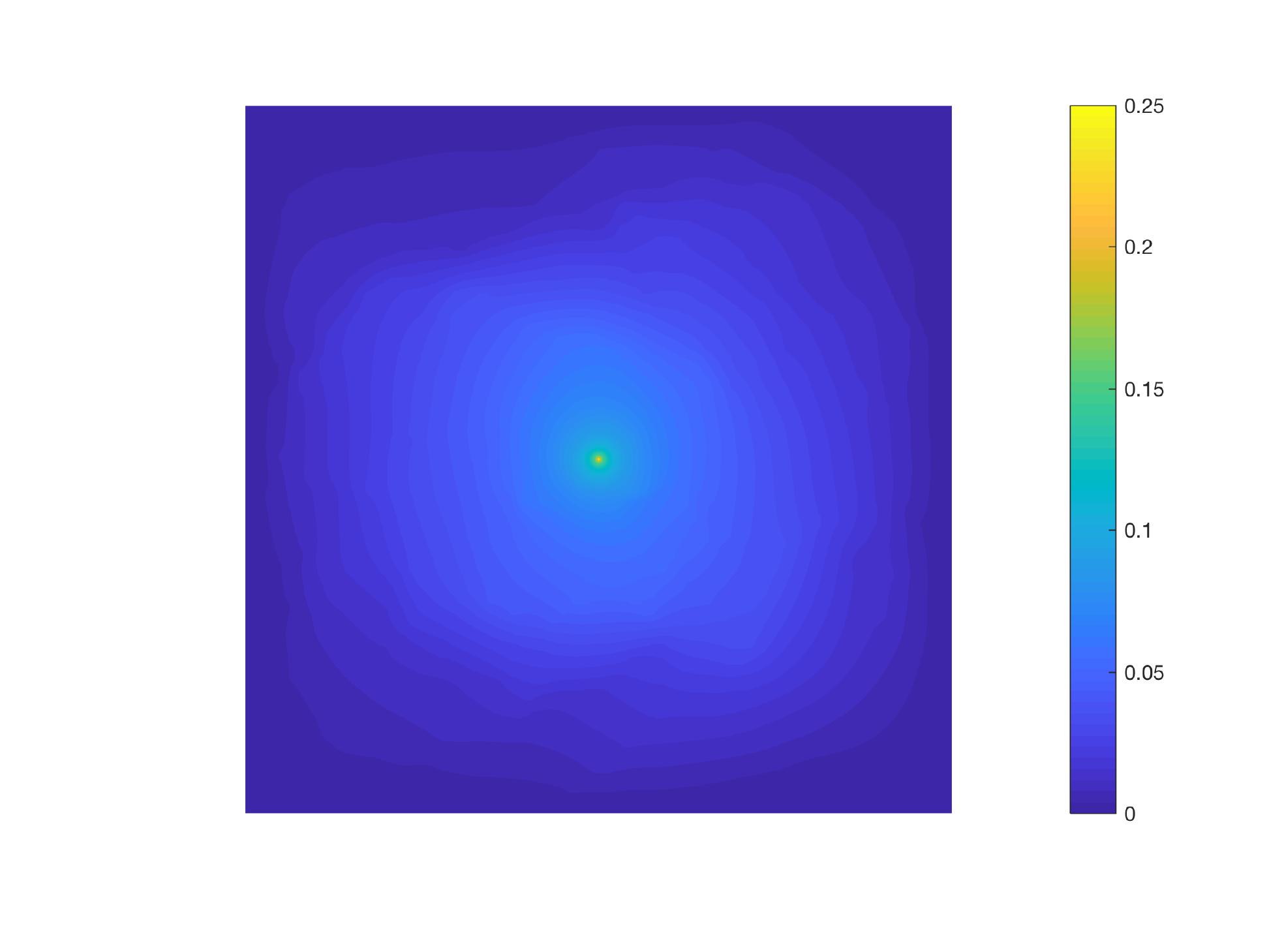}
\includegraphics[width=0.32\textwidth]{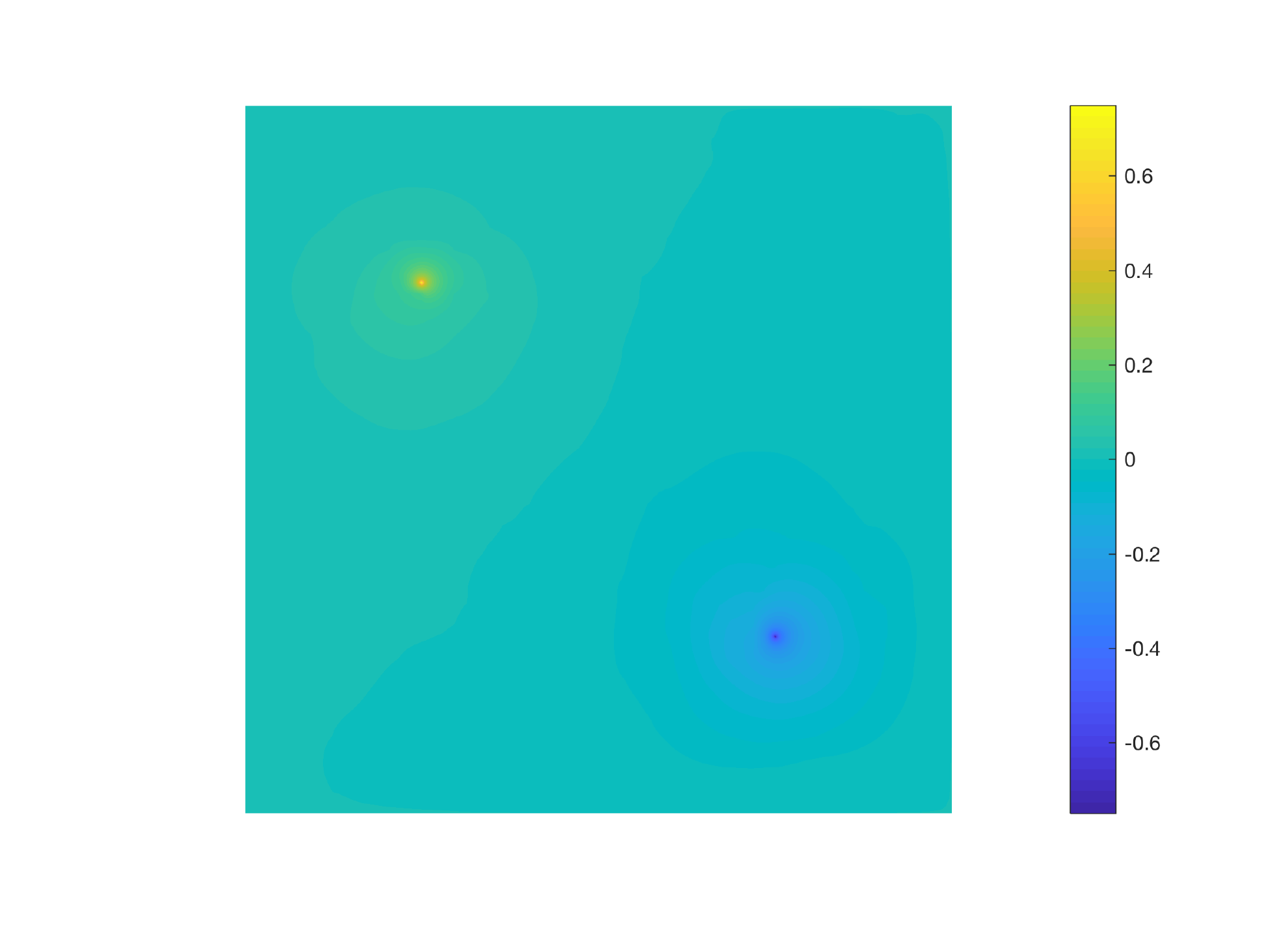}
\includegraphics[width=0.32\textwidth]{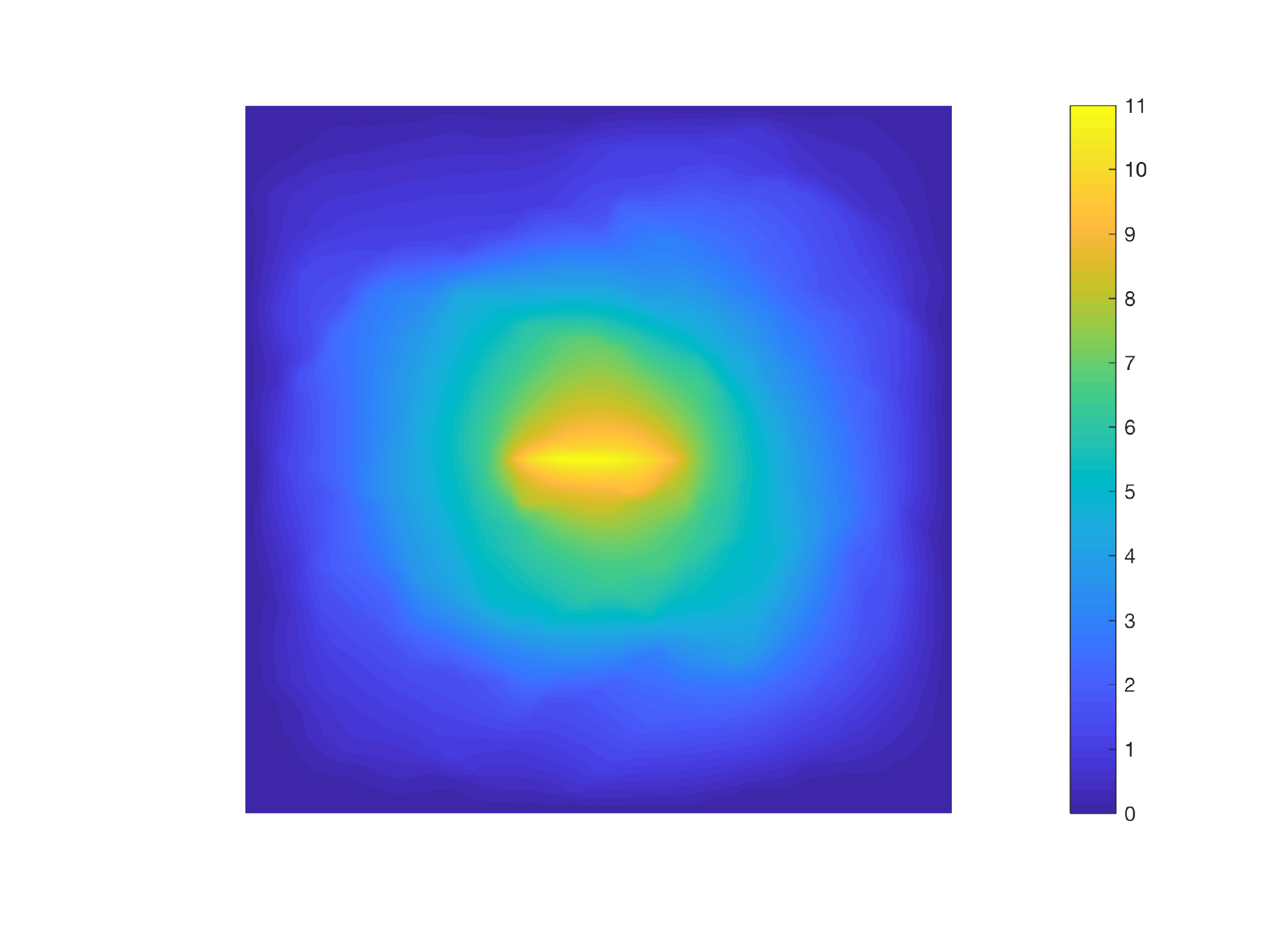}\\
\includegraphics[width=0.32\textwidth]{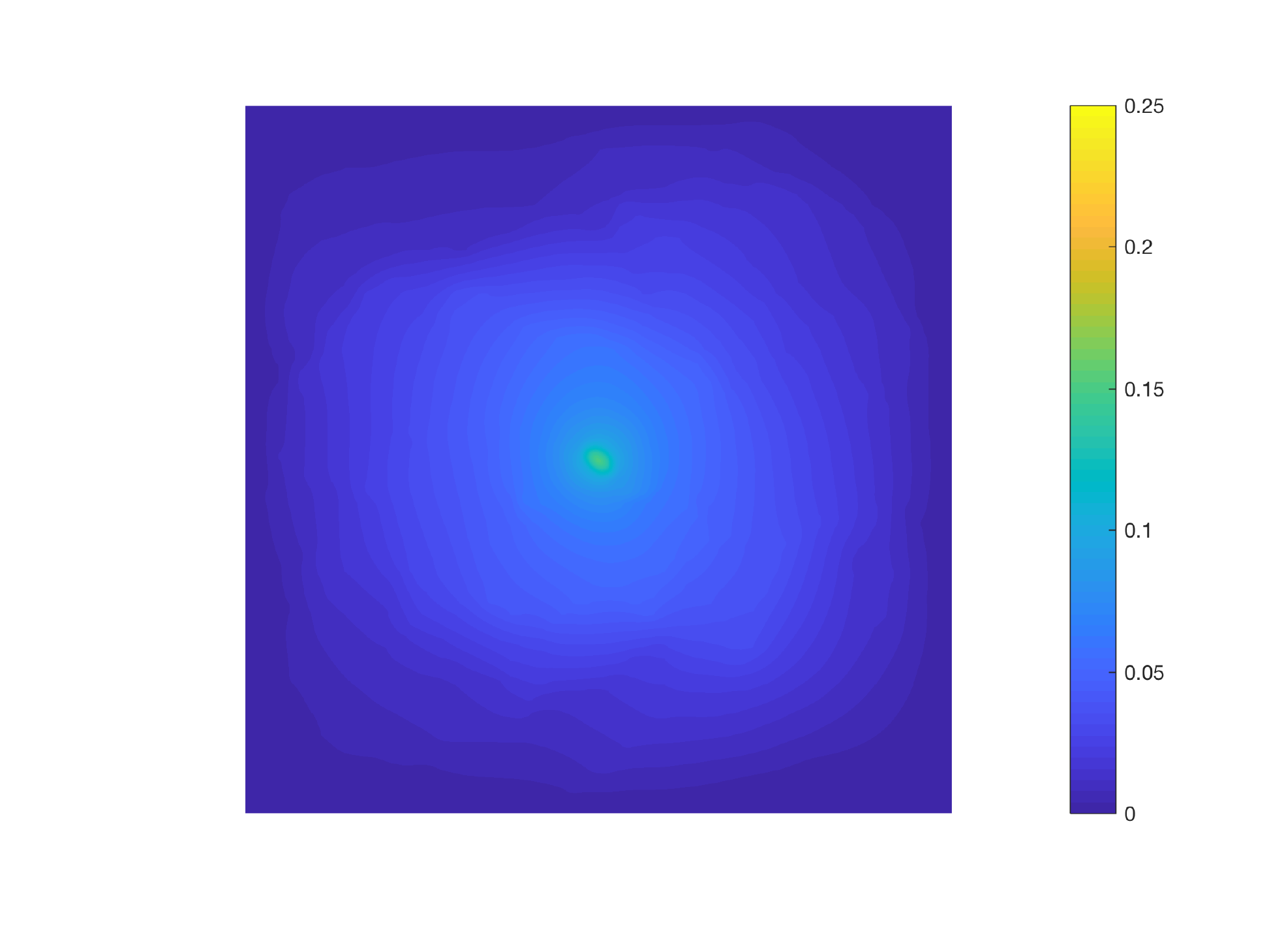}
\includegraphics[width=0.32\textwidth]{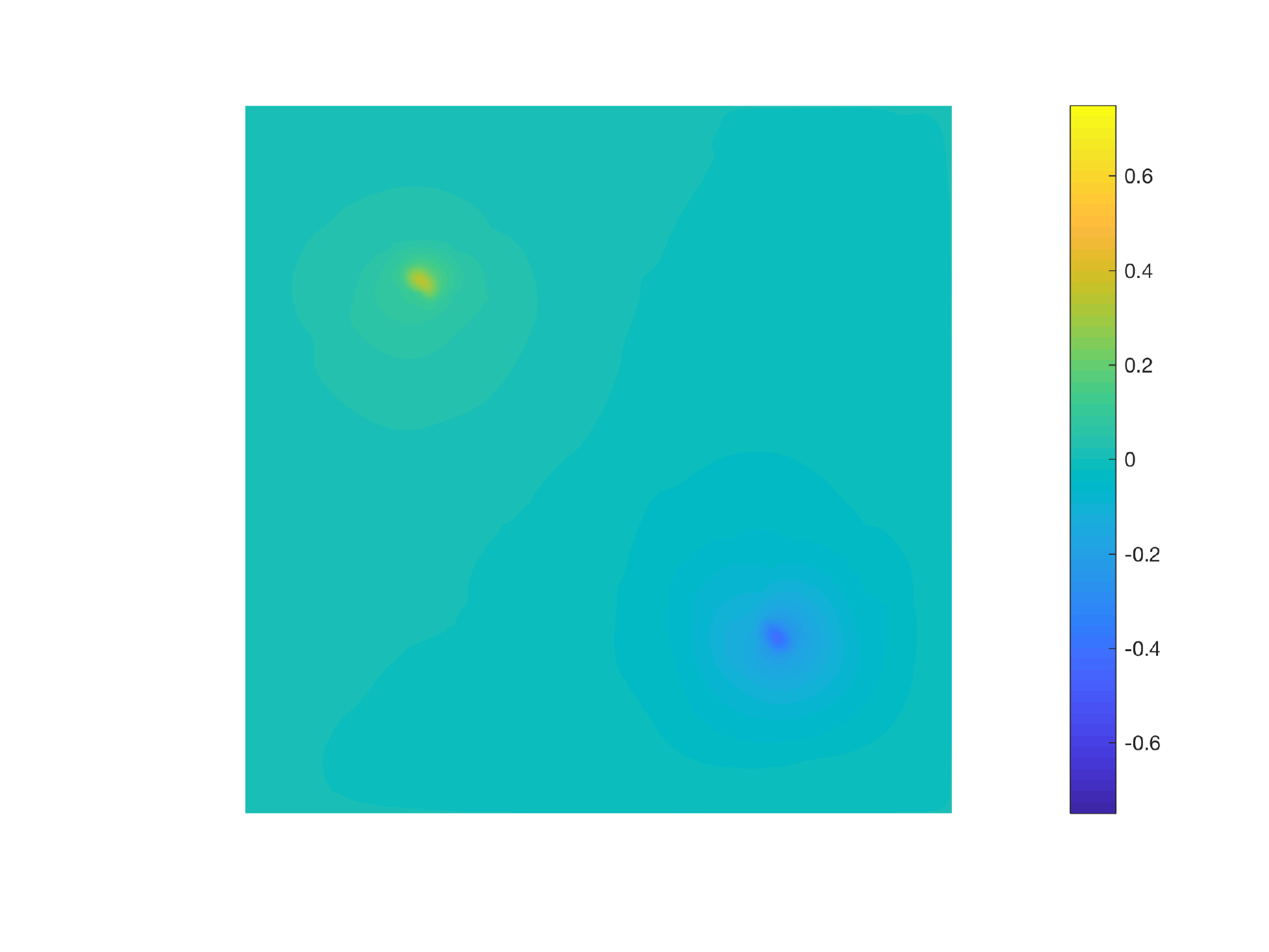}
\includegraphics[width=0.32\textwidth]{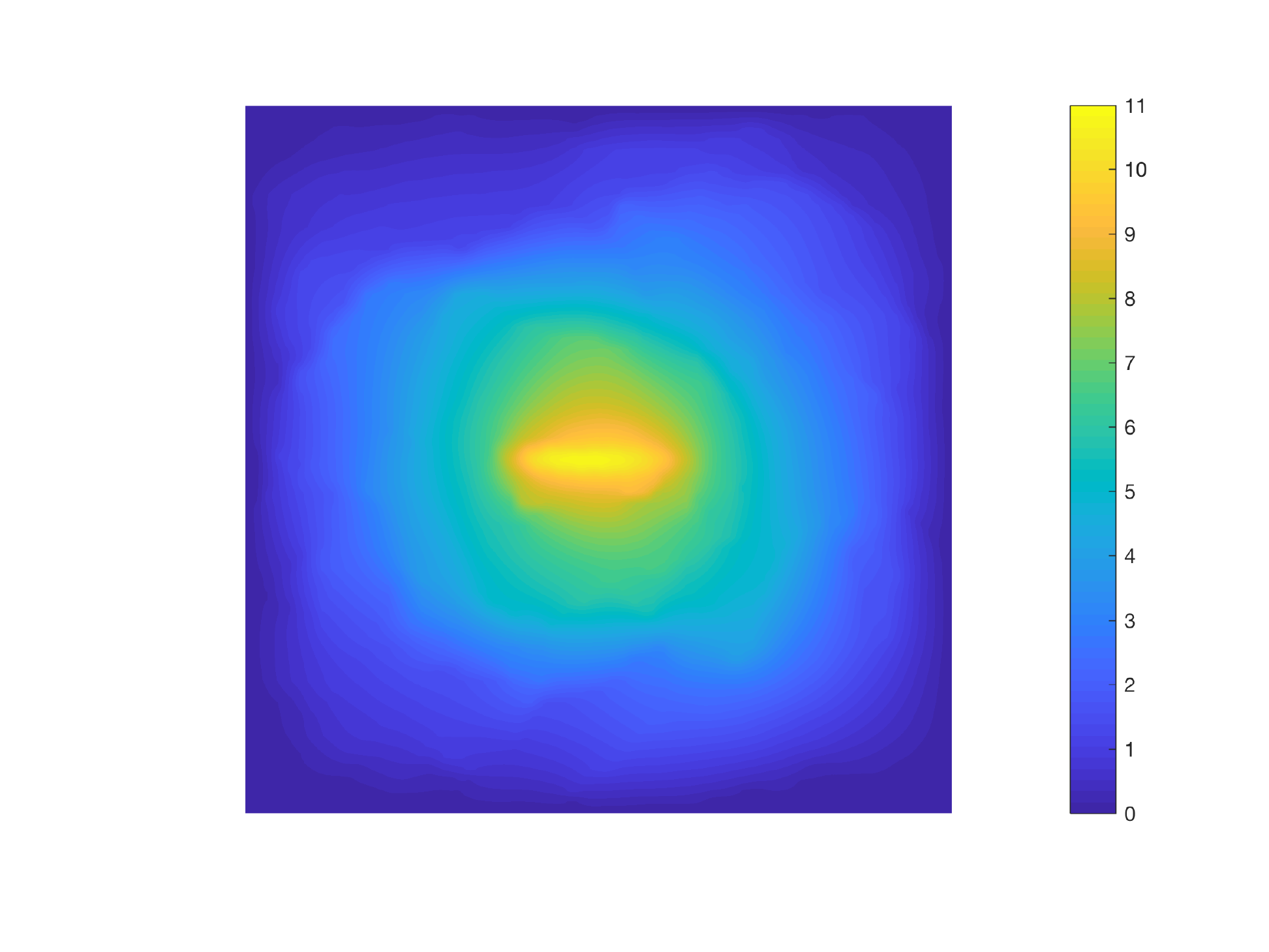}\\
\caption{Approximations for the heterogeneous example with point source for $\alpha=0.5$ (left), point sink together with point source for $\alpha=0.5$ (middle), and line fracture for $\alpha=0$ (right). Fine scale approximation with $h=2^{-9}$ (top) compared to the LOD approximation with $H=2^{-5}$ (bottom).}
\label{fig:heterogeneous:approximations}
\end{figure}
\begin{figure}
\centering
\includegraphics[width=0.32\textwidth]{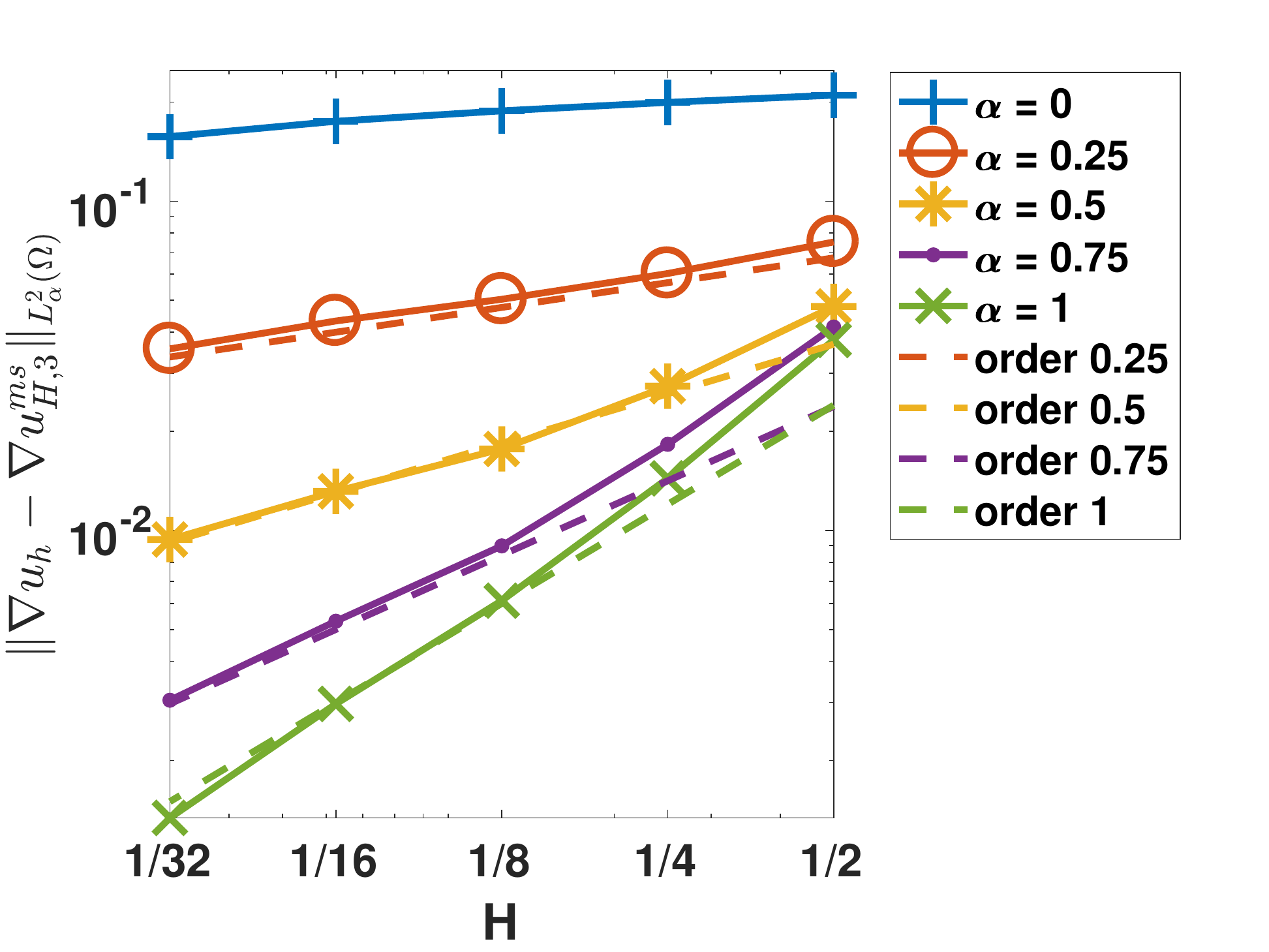}
\includegraphics[width=0.32\textwidth]{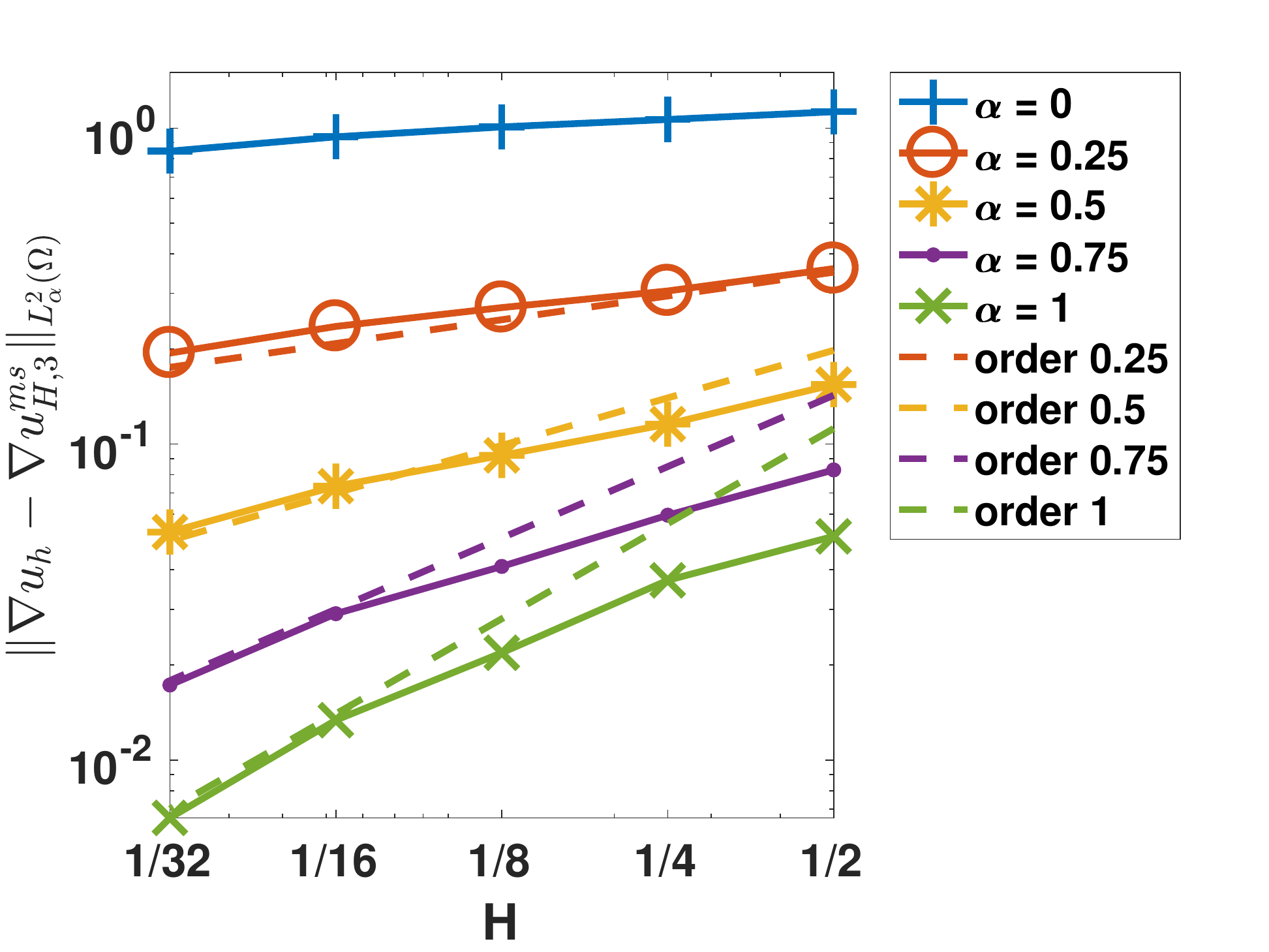}
\includegraphics[width=0.32\textwidth]{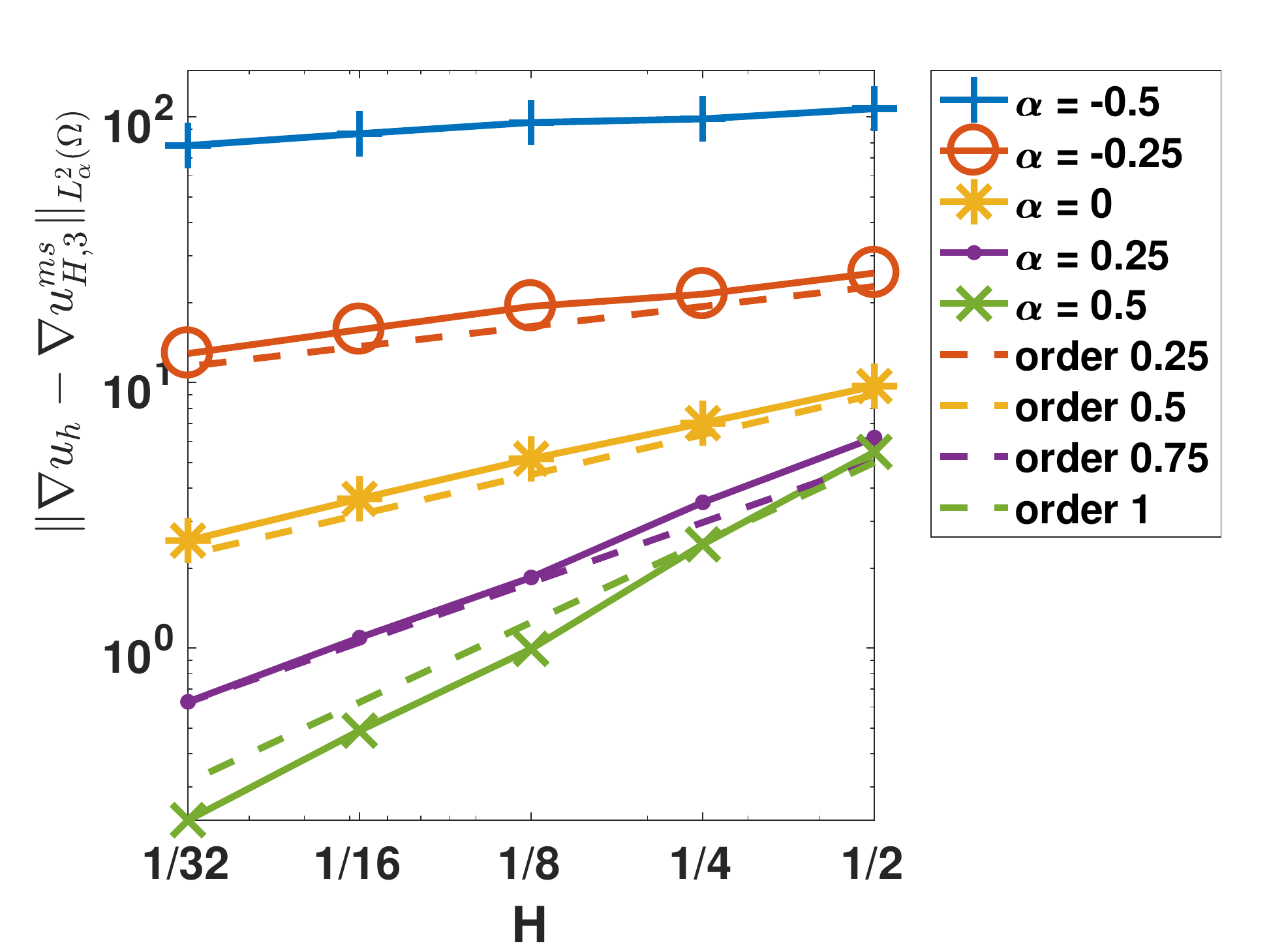}
\caption{Convergence histories for the heterogeneous example with point source (left), point sink together with point source (middle), and line source (right).}
\label{fig:heterogeneous}
\end{figure}
In this example we choose a permeability $A$ without any (periodic) structure
to demonstrate the generality of the method.
We consider the permeability of Figure~\ref{fig:permeability} with values between $1$ and $11$, taken from the SPE10 data, which has been
rescaled with the function $1+\log(1+z)$ in order to reduce the high contrast of the data.
Note that the theory here does not prevent issues from high contrast coefficients, and these ratios of material properties maybe tracked in the analysis.
Still, in unreported numerical experiments we observe convergence of the LOD method
for singular sources and the original high contrast data, at the cost of slower or in some cases even faster convergence rates
than predicted by the theory, which are arguably pre-asymptotic.
Since high contrast is not in the focus of this paper,
we reduced the contrast, in order to demonstrate the theoretical convergence rates for very coarse meshes.

In Figure~\ref{fig:heterogeneous:approximations} we display the fine scale FEM approximations
together with the multiscale approximations on refinement level 5.
We observe that the multiscale approximations resemble the fine scale features of the fine scale approximation very well.
In Figure~\ref{fig:heterogeneous} we observe convergence of $\mathcal{O}(H^\alpha)$
of the error $\lVert\nabla u_h-\nabla u_{H,3}^{ms}\rVert_{L^2_\alpha(\Omega)}$
for the point singular source terms
and $\mathcal{O}(H^{\alpha+1/2})$ for the singular line fracture, which confirms Theorem~\ref{errorlocal}
in the case of an unstructured permeability $A$ with moderate contrast.

\section{Acknowledgments}
The second author has been funded by the Austrian Science Fund (FWF) through the
project P 29197-N32.
Main parts of this paper were written while the authors enjoyed
the kind hospitality of the Hausdorff Institute for Mathematics in Bonn during the trimester program on 
multiscale problems in 2017. The first author would like to acknowledge Kris van der Zee for the discussions on Banach spaces and partial differential equations which lead to the first author looking into singular source problems.

\appendix 

\section{Quasi-Interpolation Stability }\label{stabAppendix}

Here we present the proof of \eqref{stableoperator} and \eqref{stableproj} from Lemma \ref{stablelemma}.

\begin{proof}[Proof of Lemma \ref{stablelemma} ]
	Suppose that $\vnode'\in {\cal N}(\omega_\vnode)$. If $\vnode'\in {\cal N}_{int}(\omega_\vnode)$, then 
	noting that ${\cal P}_{\vnode'}u$ is finite dimensional and  using  
	the following result from classical finite element  inverse inequalities
	\[
	\norm{{\cal P}_{\vnode'}u}_{L^r(\omega_{\vnode'}) }\lesssim  |\omega_{\vnode'}|^{\left(\frac{1}{r}-\frac{1}{s}\right)}\norm{{\cal P}_{\vnode'}u}_{L^s (\omega_{\vnode'} ) }, \text{ for } 1\leq s\leq r<\infty,
	\]
	for $r=\infty,s=1$, we obtain 
	\begin{align*}
	\norm{{\cal P}_{\vnode'}u}_{L^\infty(\omega_{\vnode'}) }&\lesssim  |\omega_{\vnode'}|^{-1}\norm{{\cal P}_{\vnode'}u}_{L^1 (\omega_{\vnode'}) }=|\omega_{\vnode'}|^{-1}\int_{\omega_{\vnode'}}|{\cal P}_{\vnode'}u|({d}^{2\beta}_{\Lambda})^{\frac{1}{2}} (\dap)^{-\frac{1}{2}} \,dx\\
	&\leq |\omega_{\vnode'}|^{-1}  \TwoNorm{{\cal P}_{\vnode'}u}{\omega_{\vnode'}}  \left(\int_{\omega_{\vnode'}} \dam \, dx\right)^{\frac{1}{2}}.
	\end{align*}
	Here we 	use  the obvious notation $ \norm{\cdot }^s_{L^s_{\beta}(\omega_\vnode')}=\int_{\omega_\vnode'} (\cdot)^s \dap dx$, with $s\in [1,\infty)$.
	From \eqref{L2proj}, letting $v_H=({\cal P}_{\vnode'} u)$, 
	 {
	\begin{align*}
	\norm{ {\cal P}_{\vnode'} u}^2_{L_{\beta}^2(\omega_{\vnode'} ) }&=\int_{{\omega}_{\vnode'}} |{\cal P}_{\vnode'}u|^2 \dap \, dx= \int_{{\omega}_{\vnode'}}u {\cal P}_{\vnode'}u \, \dap \,  dx\\
	&\lesssim \norm{ {\cal P}_{\vnode'} u}_{L^{\infty}(\omega_{\vnode'})}\int_{\omega_{\vnode'}} |u| (\dap)^{\frac{1}{2}} (\dap)^{\frac{1}{2}}  dx\\
	&\lesssim  \norm{ {\cal P}_{\vnode'} u}_{L^{\infty}(\omega_{\vnode'})} \norm{u }_{L^2_{\beta}(\omega_\vnode')}	\left(\int_{\omega_{\vnode'}} \dap dx\right)^{\frac{1}{2}}.
	\end{align*}
	}
	%
	Thus,  manipulating the  above identities 
	 {
	\begin{align*}
	\norm{ {\cal P}_{\vnode'} u}^2_{L^{\infty}(\omega_{\vnode'})} 
	&\lesssim |\omega_{\vnode'}|^{-2}  \left(\int_{\omega_{\vnode'}}\dam \, dx\right)^{}  \left(\int_{\omega_{\vnode'}} \dap dx\right)^{\frac{1}{2}}\norm{u }_{L_{\beta}^2(\omega_{\vnode'})} 	  \norm{ {\cal P}_{\vnode'} u}_{L^{\infty}(\omega_{\vnode'})} \nonumber.
	\end{align*}
	Rearranging terms
	and by taking the larger patch $\omega_{\vnode,1} \supset\omega_{\vnode'}$, we have 
	\begin{align}	\label{nodeinfty}
	\left | {\cal P}_{\vnode'} u(\vnode')\right| &\lesssim  |\omega_{\vnode,1}|^{-2}  \left(\int_{\omega_{\vnode,1}}\dam \, dx\right)^{}  \left(\int_{\omega_{\vnode,1}} \dap dx\right)^{\frac{1}{2}}\norm{u }_{L^2_{\beta}(\omega_{\vnode,1})}.	  
	\end{align}
	}
	%
	Finally, we note  (again taking a larger domain $\omega_{\vnode,1}$ to $\omega_{\vnode}$)  that
	\begin{align}\label{basisfunctionweighted}
	\TwoNorm{\lambda_{\vnode'}}{\omega_{\vnode}}\lesssim \left(\int_{\omega_{\vnode,1}}\dap \, dx\right)^{\frac{1}{2}}, 
	\qquad\text{and}\qquad
	 \TwoNorm{\nabla \lambda_{\vnode'}}{\omega_{\vnode}}\lesssim H^{-1} \left(\int_{\omega_{\vnode,1}}\dap \, dx\right)^{\frac{1}{2}}.
	\end{align}
	For the quasi-interpolation $\Qint{\beta}(u)$ we have 
	\[
	\Qint{\beta}(u)=\sum_{\vnode'\in{\cal N}_{int}(\omega_\vnode)} ({\cal P}_{\vnode'} u)(\vnode') \lambda_{\vnode'}  \text{ in }\omega_\vnode.
	\]
	For $L^2$ stability, we note that from \eqref{nodeinfty} and \eqref{basisfunctionweighted}, we obtain
	 {
	\begin{align}\label{stability.QI1}
	\TwoNorm{\Qint{\beta}(u)}{\omega_\vnode}&\leq \sum_{\vnode'\in{\cal N}_{int}(\omega_\vnode)} \left|({\cal P}_{\vnode'} u)(\vnode')\right| \TwoNorm{ \lambda_{\vnode'}}{\omega_{\vnode}} \nonumber \\
	&\lesssim\left( \frac{ |B|}{|\omega_{\vnode,1}|} \right)^2\frac{1}{|B|^2}  \left(\int_{B}\dam \, dx\right)^{}  \left(\int_{B} \dap dx\right)^{ }\norm{u }_{L_{\beta}^2(\omega_{\vnode,1})}\nonumber\\
	&\lesssim  	 \left( \frac{ |B|}{|\omega_{\vnode,1}|} \right)^2 C_{2,\beta} \norm{u }_{L_{\beta}^2(\omega_{\vnode,1})},
	\end{align} }
	where we used the Muckenhoupt weight condition from  Proposition \ref{alphaMuckenhoupt}. 
	 {
	We  take $B$ to be the ball containing the patch $\omega_{\vnode,1}$, and we
	}
	suppose (by quasi-uniformity) that the ratio $\left( \frac{ |B|}{|\omega_{\vnode,1}|} \right)$ is trivially bounded. 
	\par
	For the $H^1$ stability, first noting that $\avrg{u}{\omega_{\vnode,1}}=\Qint{\beta}(\avrg{u}{\omega_{\vnode,1}})$, we denote $\bar{u}=u-\avrg{u}{\omega_{\vnode,1}}$.
	Thus, from  \eqref{nodeinfty} and \eqref{basisfunctionweighted}, and arguments used above, we obtain 
	 {
	\begin{align}\label{stability.QI2}
	&\TwoNorm{\nabla \Qint{\beta}(u)}{\omega_{\vnode}} =\TwoNorm{\nabla \Qint{\beta}(\bar{u})}{\omega_{\vnode}} \lesssim \sum_{\vnode'\in{\cal N}_{int}(\omega_\vnode)} \left|({\cal P}_{\vnode'} \bar{u})(\vnode')\right| \TwoNorm{ \nabla \lambda_{\vnode'}}{\omega_{\vnode}} \nonumber \\
	&\lesssim H^{-1} |\omega_{\vnode,1}|^{-2}  \left(\int_{\omega_{\vnode,1}}\dam\, dx\right)^{}  \left(\int_{\omega_{\vnode,1}} \dap dx\right)^{}\norm{\bar{u} }_{L_{\beta}^2(\omega_{\vnode,1})}   \lesssim   	 \left( \frac{ |B|}{|\omega_{\vnode,1}|} \right)^2 C_{2,\beta}\norm{\nabla {u} }_{L_{\beta}^2(\omega_{\vnode,1})}, 
	\end{align} }
	where for the last inequality we used the weighted Poincar\'{e} inequality from Lemma \ref{poincare2}.
	To prove local $L^2$ approximability, we note that for $\bar{u}=u-\avrg{u}{\omega_{\vnode,1}}$, using \eqref{stability.QI1} and Lemma \ref{poincare2}, we obtain 
	 {
	\begin{align}\label{L2StableProofderp}
	\TwoNorm{u-\Qint{\beta}(u)}{\omega_\vnode}&=\TwoNorm{\bar{u}-\Qint{\beta}(\bar{u})}{\omega_\vnode} \leq \TwoNorm{\bar{u} }{\omega_{\vnode}}+\TwoNorm{ \Qint{\beta}(\bar{u})}{\omega_\vnode}\\ \nonumber
	&\lesssim \TwoNorm{\bar{u} }{\omega_{\vnode,1}}
	 \lesssim H\TwoNorm{ \nabla {u}}{\omega_{\vnode,1}}.
	\end{align} }
Thus, local approximability holds, and result \eqref{stableproj2} trivially holds from $H^1$ stability. From arguments in \cite{brown2016multiscale}, we deduce that $\Qint{\beta}$ is also a projection. 
	\qed
\end{proof}

\section{Truncation Estimates}\label{truncproofsection}
Now we will prove and state the auxiliary lemmas used to prove the localized error estimate in Theorem \ref{errorlocal}.  These proofs are entirely based on the works \cite{brown2017numerical,Henning.Morgenstern.Peterseim:2014,MP11} and references therein. The proofs have been extended to weighted spaces in \cite{brown2017numerical}. In that work the weight function was $y^\gamma$, for some $\gamma\in (-1,1)$, we replace this weight with $\dap$ and will have a very similar proof.
Here we present a  version of these ideas and highlight any subtle differences.  
\par

We begin with defining some standard technical cutoff functions. For $\vnode,\vnode' \in{\cal N}_{int}$ and $l,k\in \mathbb{N}$ and $m=0,1,\ldots$, with $k\geq l\geq 2$ we have
\begin{align}\label{quasiinclusion}
\text{if}\qquad{\omega}_{\vnode',m+1}\cap \left({\omega}_{\vnode,k}\backslash {\omega}_{\vnode,l} \right)\neq \emptyset, \qquad\text{then}\qquad{\omega}_{\vnode',1}\subset\left({\omega}_{\vnode,k+m+1}\backslash {\omega}_{\vnode,l-m-1} \right).
\end{align}
We will use the cutoff functions defined in \cite{Henning.Morgenstern.Peterseim:2014}. 
For $\vnode\in {\cal N}_{int}$ and $k>l\in \mathbb{N}$, let $\eta^{k,l}_{\vnode}:\dmn \to [0,1]$ be a continuous weakly differentiable function so that 
\begin{subequations}\label{cutoff1}
	\begin{align}
	\left(   \eta^{k,l}_{\vnode}  \right)|_{{\omega}_{\vnode,k-l}}&=0,\\
	\left(   \eta^{k,l}_{\vnode}  \right)|_{\Omega \backslash {\omega}_{\vnode,k}}&=1,\\
	\forall T\in {\cal T}_{\Omega}, \norm{\nabla \eta^{k,l}_\vnode}_{L^{\infty}(T)}&\leq C_{co}\frac{1}{l H_{}}, 
	\end{align}
\end{subequations}
where $C_{co}$ is only dependent on the shape regularity of the mesh ${\cal T}_{H}$.
We  choose here the cutoff function as in \cite{MP11} where we choose a function $\eta^{k,l}_{\vnode}$ in the space of $\mathbb{P}_{1}$ Lagrange finite elements over ${\cal  T}_{H}$ such that 
\begin{align*}
\eta^{k,l}_{\vnode}(\vnode')&=0 \text{ for all } \vnode' \in {\cal N}_{int}\cap \omega_{\vnode,k-l},\nl
\eta^{k,l}_{\vnode}(\vnode')&=1 \text{ for all } \vnode'\in {\cal N}_{int}\cap (\Omega\backslash \omega_{\vnode,k}),\nl
\eta^{k,l}_{\vnode}(\vnode')&=\frac{j}{l} \text{ for all } \vnode'\in {\cal N}_{int}\cap \omega_{\vnode,k-l+j}, j=0,1, \dots ,l.
\end{align*}
\par
 
We will now prove a lemma showing the quasi-invariance of the fine-scale functions under multiplication by cutoff functions in the distance-weighted Sobolev space.
 \begin{lemma}\label{qi}
 	Let $k>l\in \mathbb{N}$,  $\vnode \in {\cal N}_{int}$, and $\beta\in \left(-\frac{d-\ell}{2},\frac{d-\ell}{2} \right)$. Suppose that $w\in {V}^f_{\beta}$, then we have the estimate
 	\begin{align*}
 	\TwoNorm{\nabla \Qint{\beta}(\eta_{\vnode}^{k,l} w)   }{\dmn}\lesssim l^{-1}\TwoNorm{ \nabla w }{ {\omega}_{\vnode,k+2}\backslash {\omega}_{\vnode,k-l-2} }.
 	\end{align*}
 \end{lemma}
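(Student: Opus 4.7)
The plan is to exploit that $w\in V^f_\beta$ means $\Qint{\beta}w=0$, in tandem with the local $H^1$ stability and $L^2$ approximability of $\Qint{\beta}$ from Lemma \ref{stablelemma}, together with the Lipschitz behaviour of the cutoff $\eta^{k,l}_{\vnode}$. Writing $\Qint{\beta}(\eta^{k,l}_{\vnode} w)=\sum_{\vnode'\in{\cal N}_{int}} ({\cal P}_{\vnode'}(\eta^{k,l}_{\vnode} w))(\vnode')\lambda_{\vnode'}$, I would first localize the support to an annular shell. If $\omega_{\vnode'}\subset\omega_{\vnode,k-l}$ then $\eta^{k,l}_{\vnode} w\equiv 0$ on $\omega_{\vnode'}$, forcing the nodal coefficient to vanish; if $\omega_{\vnode'}\subset\dmn\setminus\omega_{\vnode,k}$ then $\eta^{k,l}_{\vnode}=1$ on $\omega_{\vnode'}$, so $({\cal P}_{\vnode'}(\eta^{k,l}_{\vnode} w))(\vnode')=({\cal P}_{\vnode'}(w))(\vnode')=(\Qint{\beta}w)(\vnode')=0$ by linear independence of the nodal basis. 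Thus only $R:=\{\vnode'\in{\cal N}_{int}:\omega_{\vnode'}\cap(\omega_{\vnode,k}\setminus\omega_{\vnode,k-l})\neq\emptyset\}$ contributes, and $\mathrm{supp}\,\Qint{\beta}(\eta^{k,l}_{\vnode}w)\subset\bigcup_{\vnode'\in R}\omega_{\vnode'}$.

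Next, for each $\vnode'\in R$, since $\Qint{\beta}(c w)=c\Qint{\beta}w=0$ for any constant $c$, I choose $c_{\vnode'}:=\avrg{\eta^{k,l}_{\vnode}}{\omega_{\vnode',1}}$ so that $\Qint{\beta}(\eta^{k,l}_{\vnode}w)=\Qint{\beta}((\eta^{k,l}_{\vnode}-c_{\vnode'})w)$. Applying the local $H^1$ stability \eqref{stableoperator} followed by the product rule gives
\[
\TwoNorm{\nabla \Qint{\beta}(\eta^{k,l}_{\vnode} w)}{\omega_{\vnode'}}\lesssim \TwoNorm{(\nabla \eta^{k,l}_{\vnode})\,w}{\omega_{\vnode',1}}+\TwoNorm{(\eta^{k,l}_{\vnode}-c_{\vnode'})\,\nabla w}{\omega_{\vnode',1}}.
\]
The first summand is controlled by $\|\nabla \eta^{k,l}_{\vnode}\|_{L^\infty}\lesssim (lH)^{-1}$ times $\TwoNorm{w}{\omega_{\vnode',1}}$, which in turn is estimated by $H\TwoNorm{\nabla w}{\omega_{\vnode',2}}$ using the $L^2$ approximability \eqref{stableproj1} patchwise together with $\Qint{\beta}w=0$ and finite overlap; this yields an overall $l^{-1}\TwoNorm{\nabla w}{\omega_{\vnode',2}}$. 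The second summand uses the pointwise oscillation bound $|\eta^{k,l}_{\vnode}-c_{\vnode'}|\leq H\|\nabla \eta^{k,l}_{\vnode}\|_{L^\infty}\lesssim l^{-1}$ on the star-shaped patch $\omega_{\vnode',1}$, yielding $\lesssim l^{-1}\TwoNorm{\nabla w}{\omega_{\vnode',1}}$.

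Summing squares over $\vnode'\in R$ using finite overlap of the twice-extended patches $\{\omega_{\vnode',2}\}$, together with the quasi-inclusion \eqref{quasiinclusion} to obtain $\bigcup_{\vnode'\in R}\omega_{\vnode',2}\subset \omega_{\vnode,k+2}\setminus\omega_{\vnode,k-l-2}$, produces the claimed estimate. The main obstacle is the careful geometric bookkeeping between $\omega_{\vnode'}$, $\omega_{\vnode',1}$, and $\omega_{\vnode',2}$: the approximability bound for $w$ naturally lives on a once-extended patch, and one must align this with the final annular region via \eqref{quasiinclusion} and shape-regularity. Once this is arranged, the weighted estimates themselves go through transparently because $\eta^{k,l}_{\vnode}$ is a standard bounded Lipschitz function and the Muckenhoupt weight $\dap$ enters only through the $\TwoNorm{\cdot}{\cdot}$ norms on the right-hand side.
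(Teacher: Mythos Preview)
Your argument is correct and follows essentially the same route as the paper: subtract a patchwise constant $c_{\vnode'}=\avrg{\eta^{k,l}_{\vnode}}{\omega_{\vnode',1}}$ (legitimate since $\Qint{\beta}w=0$), apply local $H^1$ stability and the product rule to obtain the two terms $(\nabla\eta)\,w$ and $(\eta-c)\,\nabla w$, and then sum over the relevant nodes using finite overlap and \eqref{quasiinclusion}. The one noteworthy difference is your treatment of $\|\eta^{k,l}_{\vnode}-c_{\vnode'}\|_{L^\infty(\omega_{\vnode',1})}$: you use the direct Lipschitz/mean-value bound $\mathrm{diam}(\omega_{\vnode',1})\|\nabla\eta^{k,l}_{\vnode}\|_{L^\infty}\lesssim l^{-1}$, whereas the paper routes this through an $L^\infty$--$L^1$ inverse inequality, the Muckenhoupt condition \eqref{Muckenhouptconstant.patch}, and the weighted Poincar\'e inequality (their estimate \eqref{Clip}). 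Your shortcut is legitimate here because $\eta^{k,l}_{\vnode}$ is an ordinary piecewise-linear function and the oscillation bound is an unweighted $L^\infty$ statement; the paper's detour through the weighted machinery is unnecessary for this particular step, so your version is slightly more elementary while arriving at the same estimate.
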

\texttt{}
\begin{proof}
	Fix $\vnode$ and $k$,  and denote the average  as $\avrg{\eta_{\vnode}^{k,l}}{{\omega}_{\vnode',1} }=\frac{1}{|{\omega}_{\vnode',1}|} \int_{{\omega}_{\vnode',1}} \eta_{\vnode }^{k,l}dx. $
	 For an estimate on a single patch ${\omega}_{\vnode'}$, using the stability \eqref{stableoperator} and the fact that $\Qint{\beta}(w)=0$,  we have 
	 \begin{align*}
	 \TwoNorm{\nabla \Qint{\beta}(\eta_{\vnode}^{k,l}w )}{{\omega}_{\vnode'}}&=\TwoNorm{\nabla \Qint{\beta}((\eta_{\vnode}^{k,l}-\avrg{\eta_{\vnode}^{k,l}}{{\omega}_{\vnode',1}}) w)}{{\omega}_{\vnode'}}\\
	 &\lesssim     \TwoNorm{(\eta_{\vnode}^{k,l}-\avrg{\eta_{\vnode}^{k,l}}{{\omega}_{\vnode',1}})\nabla w }{ {\omega}_{\vnode',1} }    +   \TwoNorm{\nabla  \eta^{k,l}_{\vnode} (w-\Qint{\beta}(w)) }{ {\omega}_{\vnode',1}} .
	 \end{align*}
	 Summing over all $\vnode'\in {\cal N}_{int}$, using the quasi-inclusion property \eqref{quasiinclusion} yields 
 \begin{align}
 \TwoNorm{\nabla \Qint{\beta}(\eta_{\vnode}^{k,l} w)   }{\cilt}^2 
 &\lesssim  \sum_{{\omega}_{\vnode',1}\subset{\omega}_{\vnode,k+1}\backslash {\omega}_{\vnode,k-l-1} }      \TwoNorm{ (\eta_{\vnode}^{k,l}-\avrg{\eta_{\vnode}^{k,l}}{{\omega}_{\vnode',1}})\nabla w }{ {\omega}_{\vnode',1} }^2    \nl
 \label{summingover}
 &+  \sum_{{\omega}_{\vnode',1}\subset{\omega}_{\vnode,k+1}\backslash {\omega}_{\vnode,k-l-1} }     \TwoNorm{\nabla  \eta^{k,l}_{\vnode}( w-\Qint{\beta}(w)) }{ {\omega}_{\vnode',1}}^2  .
 \end{align}
	 Here we used that $\nabla \eta_{\vnode}^{k,l}\neq 0$ only in ${\omega}_{\vnode,k}\backslash{\omega}_{\vnode,k-l} $ and 
	 $(\eta_{\vnode}^{k,l}-\avrg{\eta_{\vnode}^{k,l}}{{\omega}_{\vnode',1}})|_{{\omega}_{\vnode',1}}\neq0$ only if ${\omega}_{\vnode',1}$ intersects ${\omega}_{\vnode,k}\backslash{\omega}_{\vnode,k-l} $.
	 \par
	 
We now denote $\mu_{\vnode}^{k,l}=\eta_{\vnode}^{k,l}-\avrg{\eta_{\vnode}^{k,l}}{{\omega}_{\vnode',1}}$, 
and let ${T}$ be a  simplex in ${{\omega}_{\vnode',1}}$
such that the supremum $\norm{\mu_{\vnode}^{k,l}}_{L^{\infty}({\omega}_{\vnode',1})}$ is obtained.  
On ${T}$, $\mu_{\vnode}^{k,l}$ is an affine function, 
using the  fact that $\eta_{\vnode}^{k,l}$ is taken to be $\mathbb{P}_{1}$, we use  the following inverse estimate combined with the Muckenhoupt property Proposition \ref{alphaMuckenhoupt}. Note that  by 
	utilizing the following result from classical finite element  inverse inequalities
	\[
	\norm{q}_{L^r(T) }\lesssim  |T|^{\left(\frac{1}{r}-\frac{1}{s}\right)}\norm{q}_{L^q (T ) }, \text{ for } 1\leq s\leq r<\infty,
	\]
	for $r=\infty,s=1$, we obtain 
	\begin{align*}
	\norm{q}_{L^\infty(T ) }&\lesssim  |T|^{-1}\norm{q}_{L^1 (T ) }=|T|^{-1}\int_{T}|q|({d}^{2\beta}_{\Lambda})^{\frac{1}{2}} (\dap)^{-\frac{1}{2}} \,dx
	\leq |T|^{-1}  \TwoNorm{q}{T}  \left(\int_{T} \dam \, dx\right)^{\frac{1}{2}}.
	\end{align*}
  So that  
	\begin{align*}
\norm{\mu_{\vnode}^{k,l}}_{L^{\infty}({\omega}_{\vnode',1})}=	\norm{\mu_{\vnode}^{k,l}}_{L^\infty(T ) }
&\lesssim  |T|^{-1}\TwoNorm{\mu_{\vnode}^{k,l}}{T}  \left(\int_{T} \dam \, dx\right)^{\frac{1}{2}}.
	\end{align*}
Using the  above estimate and taking  the whole patch, we see that
\begin{align}\label{Clip}
\norm{\eta_{\vnode}^{k,l}-\avrg{\eta_{\vnode}^{k,l}}{{\omega}_{\vnode',1}}}_{L^{\infty}({\omega}_{\vnode',1})}&\lesssim  |{\omega}_{\vnode',1}|^{-1} \left(\int_{{\omega}_{\vnode',1}} \dam \, dx\right)^{\frac{1}{2}} \TwoNorm{\eta_{\vnode}^{k,l}-\avrg{\eta_{\vnode}^{k,l}}{{\omega}_{\vnode',1}}}{{\omega}_{\vnode',1}}\nl
&\lesssim  |{\omega}_{\vnode',1}|^{-1} \left(\int_{{\omega}_{\vnode',1}} \dam \, dx\right)^{\frac{1}{2}}H\TwoNorm{\nabla \eta_{\vnode}^{k,l}}{{\omega}_{\vnode',1}}\nl
&\lesssim  |{\omega}_{\vnode',1}|^{-1} \left(\int_{{\omega}_{\vnode',1}} \dam \, dx\right)^{\frac{1}{2}} \left(\int_{{\omega}_{\vnode',1}} \dap \, dx\right)^{\frac{1}{2}} H  \norm{\nabla \eta_{\vnode}^{k,l}}_{L^{\infty}({\omega}_{\vnode',1})}\nl
&\lesssim (C_{2,\beta}^{\frac{1}{2}})H\norm{\nabla \eta_{\vnode}^{k,l} }_{L^{\infty}({\omega}_{\vnode',1})},
\end{align}
where we used the Muckenhoupt weight bound \eqref{Muckenhouptconstant}, as well as quasi-uniformity of the grid.
Returning to \eqref{summingover}, using the above relation on the first term and  the approximation property \eqref{stableproj} on the second term, we obtain
  {
 \begin{align*}
 \TwoNorm{\nabla \Qint{\beta}(\eta_{\vnode}^{k,l} w)   }{\dmn}^2 &\lesssim
 H^2 \norm{\nabla \eta_{\vnode}^{k,l} }_{L^{\infty}(\dmn)}^2  \TwoNorm{ \nabla w }{ {\omega}_{\vnode,k+1}\backslash {\omega}_{\vnode,k-l-1} }^2\\
 &+H^2 \norm{\nabla \eta_{\vnode}^{k,l} }_{L^{\infty}(\dmn)}^2  \TwoNorm{ \nabla w }{ {\omega}_{\vnode,k+2}\backslash {\omega}_{\vnode,k-l-2} }^2.
 \end{align*}
 }
 Finally,  we arrive at 
 \begin{align*}
 \TwoNorm{\nabla \Qint{\beta}(\eta_{\vnode}^{k,l} w)   }{\dmn}^2 \lesssim l^{-2}
 \TwoNorm{ \nabla w }{ {\omega}_{\vnode,k+2}\backslash {\omega}_{\vnode,k-l-2} }^2,
 \end{align*}
 where we used $ \norm{\nabla \eta_{\vnode}^{k,l} }_{L^{\infty}(\dmn)}^2\lesssim 1/(lH)^2$.
 \qed
\end{proof}
\par

For the distance-weighted Sobolev space, we have the following decay of the fine-scale space: 
 \begin{lemma}\label{decaylemma} Let $\beta\in \left(-\frac{d-\ell}{2},\frac{d-\ell}{2} \right)$.
 	Fix some $\vnode\in {\cal N}_{int}$ and $F\in ({V}_{\beta}^f)'$ the dual of ${V}_{\beta}^f$ satisfying $F(w)=0$ for all $w\in {V}^f_{\beta}(\dmn\backslash {\omega}_{\vnode,1})$.  Let $u\in {V}_{\beta}^f$ be the solution of 
 	\begin{align*}
 	\int_{\dmn}A \nabla u \nabla w \,   {\dap} dx =F(w) \quad\text{for all } w\in {V}_{\beta}^f.
 	\end{align*}
 	Then, there exists a constant $\theta\in (0,1)$ such that for $k\in \mathbb{N}$  we have 
 	\begin{align*}
 	\TwoNorm{\nabla u}{\dmn \backslash {\omega}_{\vnode,k}}\lesssim \theta^{k} \TwoNorm{\nabla u}{\dmn }.
 	\end{align*}
 \end{lemma}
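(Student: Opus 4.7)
The plan is to follow the standard LOD exponential-decay argument (as in \cite{MP11,Henning.Morgenstern.Peterseim:2014,brown2017numerical}), transplanted to the distance-weighted setting by invoking the Muckenhoupt bound \eqref{Muckenhouptconstant.patch}, the weighted Poincar\'e inequality (Lemma \ref{poincare2}), and the quasi-invariance Lemma \ref{qi}.

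I would fix an integer $l\geq 2$ (to be chosen large) and consider $k\geq l+2$; the range $k<l+2$ is absorbed into the hidden constant of the final bound. With the cutoff $\eta^{k,l}_{\vnode}$ from \eqref{cutoff1}, introduce the test function
\[
w := \eta^{k,l}_{\vnode}\, u - \Qint{\beta}\bigl(\eta^{k,l}_{\vnode}\, u\bigr).
\]
Projectivity of $\Qint{\beta}$ gives $\Qint{\beta}(w)=0$, so $w\in V^f_{\beta}$. Because $\eta^{k,l}_{\vnode}$ vanishes on $\omega_{\vnode,k-l}$ and the quasi-interpolation spreads support by at most one coarse layer, $w$ vanishes on $\omega_{\vnode,k-l-1}\supset\omega_{\vnode,1}$; hence $w\in V^f_{\beta}(\dmn\setminus\omega_{\vnode,1})$ and the hypothesis on $F$ forces $F(w)=0$. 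Plugging $w$ into the variational equation and using the Leibniz rule yields
\[
\int_{\dmn}\eta^{k,l}_{\vnode}\,A\nabla u\cdot\nabla u\,\dap\,dx = -\int_{\dmn} u\,A\nabla u\cdot\nabla\eta^{k,l}_{\vnode}\,\dap\,dx + \int_{\dmn} A\nabla u\cdot\nabla\Qint{\beta}(\eta^{k,l}_{\vnode} u)\,\dap\,dx,
\]
whose left-hand side dominates $\gamma_1\|\nabla u\|^2_{L^2_\beta(\dmn\setminus\omega_{\vnode,k})}$ since $\eta^{k,l}_{\vnode}\equiv 1$ there.

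Both integrands on the right are essentially supported in an annulus $A_k\subset\omega_{\vnode,k+2}\setminus\omega_{\vnode,k-l-2}$: the first because $\nabla\eta^{k,l}_{\vnode}$ lives on $\omega_{\vnode,k}\setminus\omega_{\vnode,k-l}$ with $\|\nabla\eta^{k,l}_{\vnode}\|_{L^\infty}\lesssim(lH)^{-1}$, and the second because $\Qint{\beta}(\eta^{k,l}_{\vnode} u)(\vnode')$ vanishes both for $\vnode'$ deep inside $\omega_{\vnode,k-l}$ (where $\eta^{k,l}_{\vnode}=0$) and for $\vnode'$ far outside $\omega_{\vnode,k}$ (where $\eta^{k,l}_{\vnode}\equiv 1$ and $\Qint{\beta}(u)=0$). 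Cauchy--Schwarz combined with the weighted Poincar\'e inequality on patches (using $\Qint{\beta}(u)=0$ to convert $\|u\|_{L^2_\beta(A_k)}$ into $H\|\nabla u\|_{L^2_\beta(A_k^+)}$ on a slightly enlarged annulus $A_k^+$) controls the first term by $Cl^{-1}\|\nabla u\|^2_{L^2_\beta(A_k^+)}$, while Cauchy--Schwarz and Lemma \ref{qi} directly control the second by $Cl^{-1}\|\nabla u\|^2_{L^2_\beta(A_k^+)}$. Redefining $A_k := A_k^+$, I obtain
\[
\|\nabla u\|^2_{L^2_\beta(\dmn\setminus\omega_{\vnode,k})} \leq \frac{\tilde C}{l}\,\|\nabla u\|^2_{L^2_\beta(A_k)}.
\]

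Setting $E_k := \|\nabla u\|^2_{L^2_\beta(\dmn\setminus\omega_{\vnode,k})}$ and noting $A_k\subset\dmn\setminus\omega_{\vnode,k-l-2}$, the previous estimate becomes the one-step contraction $E_k\leq(\tilde C/l)\,E_{k-l-2}$. Choosing $l$ large enough that $\tilde C/l\leq 1/2$ and iterating $\lfloor k/(l+2)\rfloor$ times yields $E_k\lesssim\theta^{2k}E_0$ with $\theta := (1/2)^{1/(2(l+2))}\in(0,1)$, which is the required bound. The hardest part is the meticulous bookkeeping of the concentric patches so that three constraints hold simultaneously: (i) $\operatorname{supp}(w)$ lies outside $\omega_{\vnode,1}$ so that $F(w)=0$; (ii) the Leibniz-rule error terms fit inside a single controllable annulus $A_k$; and (iii) the patch-Muckenhoupt constants \eqref{Muckenhouptconstant.patch} stay uniformly bounded in $\vnode$ and $k$, ensuring the hidden constants are mesh-independent.
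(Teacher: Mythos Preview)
Your proof is correct and follows essentially the same approach as the paper's: the test function $w=\eta^{k,l}_{\vnode}u-\Qint{\beta}(\eta^{k,l}_{\vnode}u)$ is exactly the paper's $\tilde u$, and the Leibniz-rule/Caccioppoli splitting, the use of $F(w)=0$, the invocation of Lemma~\ref{qi} for the $\Qint{\beta}$-term, and the approximation property \eqref{stableproj1} (via $\Qint{\beta}(u)=0$) for the $u\,\nabla\eta^{k,l}_{\vnode}$-term all match. The only cosmetic difference is the endgame: you fix $l$ large so that $\tilde C/l\leq 1/2$ and obtain $\theta=(1/2)^{1/(2(l+2))}$, whereas the paper keeps $l$ free and writes $\theta=l^{-1/(l+2)}$; your version is arguably cleaner since it explicitly absorbs the hidden constant before iterating.
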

\begin{proof}
	Let $\eta_{\vnode}^{k,l}$ be the cut-off function as in the previous lemma for $l<k-2$. Let $\tilde{u}=\eta_{\vnode}^{k,l} u -\Qint{\beta}(\eta_{\vnode}^{k,l} u )\in {V}_{\beta}^f(\dmn\backslash {\omega}_{\vnode,k-l-1})$, and note that from Lemma \ref{qi} we have 
	\begin{align}\label{qiestimate}
	\TwoNorm{\nabla( \eta_{\vnode}^{k,l}u-\tilde{u})}{\dmn }=\TwoNorm{\nabla \Qint{\beta}(\eta_{\vnode}^{k,l} u ) }{\dmn }\lesssim  l^{-1}\TwoNorm{ \nabla u }{ {\omega}_{\vnode,k+2}\backslash {\omega}_{\vnode,k-l-2} }.
	\end{align}
	From this estimate and the properties of $F$ we have 
	\begin{align}\label{relation}
	\int_{\dmn\backslash {\omega}_{\vnode,k-l-1}} A \nabla u \nabla \tilde{u}\, { \dap } dx=\int_{\dmn}A \nabla u \nabla \tilde{u} \,   { \dap } dx=F(\tilde{u})=0.
	\end{align}
{
We utilize a version of the Caccioppoli inequality \cite{caffarelli2016fractional} for the distance-weighted space, and the coercivity of the corrector problems \eqref{corrector} to obtain
\begin{align*}
\TwoNorm{\nabla u}{\dmn\backslash {\omega}_{\vnode,k}}^2&\lesssim \int_{\Omega\backslash {\omega}_{\vnode,k-l-1}} \eta_{\vnode}^{k,l} A\nabla u\nabla u\,  {\dap} \, dx 
=  \int_{\dmn\backslash {\omega}_{\vnode,k-l-1}} A\nabla u\left(\nabla (\eta_{\vnode}^{k,l} u ) -u\nabla \eta_{\vnode}^{k,l}     \right)\,  {\dap}  dx.
\end{align*}	
}
Using the fact that $\Qint{\beta}(u)=0$, estimate \eqref{qiestimate}, and  the relation \eqref{relation}, we have 
 {
\begin{align*}
\TwoNorm{\nabla u}{\dmn\backslash {\omega}_{\vnode,k}}^2 &\lesssim  \int_{\dmn\backslash {\omega}_{\vnode,k-l-1}} A\nabla u(\nabla (\eta_{\vnode}^{k,l} u -\tilde{u}))\,   {\dap } dx-\int_{\dmn\backslash {\omega}_{\vnode,k-l-1}}A\nabla u(u-\Qint{\beta}(u))\nabla \eta_{\vnode}^{k,l} \,   {\dap }dx\\
&\lesssim   l^{-1}\TwoNorm{ \nabla u }{ \dmn \backslash {\omega}_{\vnode,k-l-2} }^2 +(l H)^{-1}\TwoNorm{ \nabla u }{ \dmn \backslash {\omega}_{\vnode,k-l-1} }  \TwoNorm{u-\Qint{\beta}(u) }{\Omega \backslash  {\omega}_{\vnode,k-l-1} }   \\
&\lesssim l^{-1}  \TwoNorm{ \nabla u }{ \dmn \backslash {\omega}_{\vnode,k-l-2} }^2 .
\end{align*}	
}
%
On the last term we used the approximation property \eqref{stableproj}.  Successive applications of 
the above estimate leads to 
\begin{align*}
\TwoNorm{\nabla u}{\cilt\backslash {\omega}_{\vnode,k}}^2&\lesssim  l^{-1} \TwoNorm{ \nabla u }{ \cilt \backslash {\omega}_{\vnode,k-l-2} }^2
\lesssim  l^{- \lfloor \frac{k-1}{l+2}  \rfloor} \TwoNorm{ \nabla u }{ \cilt \backslash {\omega}_{\vnode,1} }^2
\lesssim  l^{- \lfloor \frac{k-1}{l+2}  \rfloor} \TwoNorm{ \nabla u }{ \cilt  }^2.
\end{align*}
Finally, noting that
\[
\left\lfloor \frac{k-1}{l+2}  \right\rfloor = \left\lceil \frac{k-l-2}{l+2}  \right\rceil \geq \frac{k}{l+2} -1,
\]
taking $\theta=l^{-\frac{1}{l+2}}$ yields the result.  \qed
\end{proof}

\par
We now are ready to restate our result on the error introduced from localization. This is merely Lemma \ref{localglobal.derp} restated and proven.
When $k$ is sufficiently large so that the corrector problem is all of $\cilt$, we denote $Q^{\beta}_{\vnode,k}=Q^{\beta}_{\vnode,\cilt}$.
Let $u_{H}\in {V}_{H}$, let $Q^{\beta}_{k}$ be constructed from \eqref{Qcorrector}, and $Q^{\beta}_{\cilt}$ defined to be the``ideal" corrector without truncation, then 
\begin{align}\label{localglobaleq}
	&	\TwoNorm{\nabla( Q^{\beta}_{\dmn}(u_{H})-Q^{\beta}_{k}(u_{H}))  }{\dmn} 
 	\lesssim  k^{\frac{d}{2}} \theta^{k  }   \TwoNorm{ \nabla u_{H} }{\dmn}.
\end{align}
Again we use techniques standard at this point in the view of \cite{brown2017numerical}, but presented for completeness.
\begin{proof}[Proof of Lemma \ref{localglobal.derp}]
	We denote $v= Q^{\beta}_{\dmn}(u_{H})-Q^{\beta}_{k}(u_{H})\in {V}_{\beta}^f$, subsequently $\Qint{\beta}(v)=0$.
	Taking the cut-off function $\eta_{\vnode}^{k,1}$ we have 
	\begin{align}
	\label{term1}
	\TwoNorm{\nabla v}{\dmn}^2 &\lesssim \sum_{\vnode\in {\cal N}_{int}}\int_{\dmn}A \nabla( Q^{\beta}_{\vnode,\dmn}(u_{H})-Q^{\beta}_{\vnode,k}(u_{H}))\nabla (v(1-\eta_{\vnode}^{k,1}))\,  { \dap } dx \\
	\label{term2}
	&+\sum_{\vnode\in {\cal N}_{int}}\int_{\dmn}A \nabla( Q^{\beta}_{\vnode,\dmn}(u_{H})-Q^{\beta}_{\vnode,k}(u_{H}))\nabla (v\eta_{\vnode}^{k,1})\,   { \dap } dx.
	\end{align}
	Estimating the right hand side of \eqref{term1} for each $\vnode$, we have, using the boundedness of $A$
\begin{align*}
&\int_{\cilt}A\nabla( Q^{\beta}_{\vnode,\cilt}(u_{H})-Q^{\beta}_{\vnode,k}(u_{H}))\nabla (v(1-\eta_{\vnode}^{k,1}))\,  { \dap } \,dx\\
&\qquad\lesssim  \TwoNorm{\nabla( Q^{\beta}_{\vnode,\cilt}(u_{H})-Q^{\beta}_{\vnode,k}(u_{H}))}{\cilt} \left(\TwoNorm{\nabla v}{{\omega}_{\vnode,k}}
+  \TwoNorm{v\nabla (1-\eta_{\vnode}^{k,1}))}{{\omega}_{\vnode,k}\backslash {\omega}_{\vnode,k-1}}   \right)\\
&\qquad\lesssim \TwoNorm{\nabla( Q^{\beta}_{\vnode,\cilt}(u_{H})-Q^{\beta}_{\vnode,k}(u_{H}))}{\cilt} \left(\TwoNorm{\nabla v}{{\omega}_{\vnode,k}} 
+  H^{-1}\TwoNorm{v-\Qint{\beta}(v)}{{\omega}_{\vnode,k}\backslash {\omega}_{\vnode,k-1}}   \right)\\
&\qquad\lesssim  \TwoNorm{\nabla( Q^{\beta}_{\vnode,\cilt}(u_{H})-Q^{\beta}_{\vnode,k}(u_{H}))}{\cilt} \TwoNorm{\nabla v}{{\omega}_{\vnode,k+1}}.
\end{align*}
	As in the proof of Lemma \ref{decaylemma}, we denote $\tilde{v}=\eta_{\vnode}^{k,1} v -\Qint{\beta}(\eta_{\vnode}^{k,1} v )\in {V}_{\beta}^f(\dmn\backslash {\omega}_{\vnode,k-2})$ 
	and so $\tilde{v}$ satisfies  
	\begin{align*}
	\int_{\dmn}A \nabla( Q^{\beta}_{\vnode,\cilt}(u_{H})-Q^{\beta}_{\vnode,k}(u_{H}))\nabla\tilde{v}   \,  { \dap }dx=0.
	\end{align*}
	We have now the estimate for \eqref{term2} for $\vnode\in {\cal N}_{int}$ using the above identity and \eqref{qiestimate}
	\begin{align*}
	\int_{\dmn}A \nabla( Q^{\beta}_{\vnode,\dmn}(u_{H})-Q^{\beta}_{\vnode,k}(u_{H}))\nabla (v\eta_{\vnode}^{k,1}-\tilde{v})\,   { \dap }dx
	\lesssim  \TwoNorm{ \nabla( Q^{\beta}_{\vnode,\dmn}(u_{H})-Q^{\beta}_{\vnode,k}(u_{H}))  }{ \dmn  }\TwoNorm{ \nabla v }{ {\omega}_{\vnode ,k+2 } }
	\end{align*}
Combing the estimates for \eqref{term1} and \eqref{term2} we obtain 
\begin{align}
\TwoNorm{\nabla v}{\cilt}^2
&\lesssim \sum_{\vnode\in {\cal N}_{int}} 
\TwoNorm{\nabla( Q^{\beta}_{\vnode,\cilt}(u_{H})-Q^{\beta}_{\vnode,k}(u_{H}))}{\cilt}  \TwoNorm{\nabla v}{{\omega_{\vnode,k+2}}}\nonumber\\
\label{vestimate}
&\lesssim  k^{\frac{d}{2}} \left(\sum_{\vnode\in {\cal N}_{int}}
\TwoNorm{ \nabla( Q^{\beta}_{\vnode,\cilt}(u_{H})-Q^{\beta}_{\vnode,k}(u_{H}))  }{ \cilt  }^2\right)^{\frac{1}{2}}\TwoNorm{ \nabla v }{\cilt },
\end{align}
supposing that 
$\# \{\vnode'\in {\cal N}_{int}|{\omega}_{\vnode'}\subset {\omega}_{\vnode,k+2}\}\lesssim k^{d}$,
as is guaranteed by quasi-uniformity of the coarse-grid.
\par

For $\vnode \in{\cal N}_{int}$,  we  estimate $\TwoNorm{ \nabla( Q^{\beta}_{\vnode,\dmn}(u_{H})-Q^{\beta}_{\vnode,k}(u_{H}))  }{ \dmn  }$ and 
we use the Galerkin orthogonality of the local problem, that is 
\begin{align}\label{galerkinlocal}
\TwoNorm{ \nabla( Q^{\beta}_{\vnode,\dmn}(u_{H})-Q^{\beta}_{\vnode,k}(u_{H}))  }{ \dmn  }\leq \inf_{q_\vnode\in {V}_{\beta}^f({\omega}_{\vnode,k}) } \TwoNorm{ \nabla( Q^{\beta}_{\vnode,\cilt}(u_{H})-q_\vnode)  }{ \dmn  }.
\end{align}
Let 
$q_{\vnode}=(1-\eta^{(k-1),1}_{\vnode})Q^{\beta}_{\vnode,\dmn}(u_{H})-\Qint{\beta}((1-\eta^{(k-1),1}_{\vnode})Q_{\vnode,\dmn}(u_{H})  )\in {V}^f({\omega}_{\vnode,k})$, 
we have 
\begin{align*}
&\TwoNorm{ \nabla( Q^{\beta}_{\vnode,\dmn}(u_{H})-Q^{\beta}_{\vnode,k}(u_{H}))  }{ \dmn  }\leq 
\TwoNorm{ \nabla( \eta^{(k-1),1}_{\vnode}Q^{\beta}_{\vnode,\dmn}(u_{H})+\Qint{\beta}((1-\eta^{(k-1),1}_{\vnode})Q^{\beta}_{\vnode,\dmn}(u_{H})  )) }{ \dmn  }\\
&\lesssim \TwoNorm{ \nabla Q^{\beta}_{\vnode,\dmn}(u_{H}) }{\dmn \backslash {\omega}_{\vnode,k-2} }+ 
\TwoNorm{ \nabla(\Qint{\beta}((1-\eta^{k-1,1}_{\vnode})Q^{\beta}_{\vnode,\dmn}(u_{H})  )) }{ \Omega }.
\end{align*}
 {
Using $\Qint{\beta}((1-\eta^{k-1,1}_{\vnode})Q^{\beta}_{\vnode,\dmn}(u_{H})  )=-\Qint{\beta}(\eta^{k-1,1}_{\vnode}Q^{\beta}_{\vnode,\dmn}(u_{H})  )$ and Lemma \ref{qi} on the second term, and then Lemma \ref{decaylemma}, we arrive at 
}
\begin{align*}
&\TwoNorm{ \nabla( Q^{\beta}_{\vnode,\cilt}(u_{H})-Q^{\beta}_{\vnode,k}(u_{H}))  }{ \cilt  }^2
\lesssim  \TwoNorm{ \nabla Q^{\beta}_{\vnode,\cilt }(u_{H}) }{ \cilt \backslash {\omega}_{\vnode,k-4} }^2\lesssim  \theta^{2(k-4) } \TwoNorm{ \nabla Q^{\beta}_{\vnode,\cilt}(u_{H}) }{ \cilt  }^2.
\end{align*} 
From the definition of $Q^{\beta}_{\vnode,\cilt}$ from \eqref{Qcorrector} with global corrector patches,
we get
\begin{align*}
\TwoNorm{ \nabla( Q_{\vnode,\cilt}(u_{H})-Q_{\vnode,k}(u_{H}))  }{ \cilt  }^2
\lesssim \theta^{2k} \TwoNorm{ \nabla u_{H} }{ {\omega}_{\vnode} }^2,
\end{align*}
 {
where we used the bounds from Proposition \ref{correctorPropBound} modified for $Q^{\beta}_{\vnode,\cilt}$ from \eqref{Qcorrector}
with localized right hand side, hence localized upper bounds.
}
Thus, summing over all $\vnode\in {\cal N}_{int}$  and combining the above with \eqref{vestimate} concludes the proof. \qed	
\end{proof}

\bibliographystyle{abbrv}	
\bibliography{HMP_references.bib}

\end{document}